\newcommand{\R}{\mathbb{R}}
\newcommand{\C}{\mathbb{C}}
\newcommand{\Z}{\mathbb{Z}}
\newcommand{\N}{\mathbb{N}}
\newtheorem{theorem}{Theorem}[section]
\newtheorem{lemma}[theorem]{Lemma}
\newtheorem{proposition}[theorem]{Proposition}
\newtheorem{corollary}[theorem]{Corollary}
\theoremstyle{remark}
\newtheorem{remark}{Remark}[section]
\theoremstyle{definition}
\newtheorem{definition}{Definition}[section]
\numberwithin{equation}{section}
\def\@cite#1#2{[{{\bfseries #1}\if@tempswa , #2\fi}]}
\begin{document}
%\linenumbers
\begin{center}
\Large{{\bf
Variational problems for the system of nonlinear Schr\"odinger equations with derivative nonlinearities}}
\end{center}

\vspace{5pt}

\begin{center}
Hiroyuki Hirayama%
\footnote{
Faculty of Education, University of Miyazaki, 1-1, Gakuenkibanadai-nishi, Miyazaki, 889-2192 Japan,  
E-mail:\ {\tt h.hirayama@cc.miyazaki-u.ac.jp}}
and 
Masahiro Ikeda%
\footnote{
Department of Mathematics, Faculty of Science and Technology, Keio University, 3-14-1 Hiyoshi, Kohoku-ku, Yokohama, 223-8522, Japan/Center for Advanced Intelligence Project, RIKEN, Japan, 
E-mail:\ {\tt masahiro.ikeda@keio.jp/masahiro.ikeda@riken.jp}}
\end{center}

%%%%     ABSTRACT     %%%%%%
\newenvironment{summary}{\vspace{.5\baselineskip}\begin{list}{}{%
     \setlength{\baselineskip}{0.85\baselineskip}
     \setlength{\topsep}{0pt}
     \setlength{\leftmargin}{12mm}
     \setlength{\rightmargin}{12mm}
     \setlength{\listparindent}{0mm}
     \setlength{\itemindent}{\listparindent}
     \setlength{\parsep}{0pt}
     \item\relax}}{\end{list}\vspace{.5\baselineskip}}
\begin{summary}
{\footnotesize {\bf Abstract.}
We consider the Cauchy problem of the system of 
nonlinear Schr\"odinger equations with derivative nonlinearlity. This system was introduced by Colin-Colin (2004) 
as a model of laser-plasma interactions. 
We study existence of ground state 
solutions and the global well-posedness 
of this system by using the variational methods. 
We also consider the stability of traveling waves for this system. 
These problems are proposed by Colin-Colin as the open problems. 
We give a subset of the ground-states set 
which satisfies the condition of stability. 
In particular, we prove the stability 
of the set of traveling waves with small speed 
for $1$-dimension. 
}
\end{summary}

{\footnotesize{\it Mathematics Subject Classification}\/ (2020): %
%Primary: 
35Q55; %	NLS-like equations (nonlinear Schr\"odinger)
35A01; %	Existence problems: global existence, local existence, non-existence
35A15;%Variational methods
35B35%	Stability
}\\
{\footnotesize{\it Key words and phrases}\/: %
System of Schr\"odinger equations,
Derivative nonlinearity,
Global well-posedness,
Ground state,
Traveling waves, 
Stability
}
%\tableofcontents

\section{Introduction}%{\color{red}How is construct the introduction?}
\ \ \ In the present paper we study well-posedness, asymptotic behavior of solutions and stability of traveling waves to the Cauchy problem of 
the system of the Schr\"odinger equations with derivative nonlinearities on the Euclidean space $\mathbb{R}^d$:
\begin{equation}\label{NLS}
\begin{cases}
i\partial_tu_1+\alpha \Delta u_1=-(\nabla \cdot u_3)u_2,& t\in I_{\max},\ x\in \R^d,\\
i\partial_tu_2+\beta \Delta u_2=-(\nabla \cdot \overline{u_3})u_1,& t\in I_{\max},\ x\in \R^d,\\
i\partial_tu_3+\gamma \Delta u_3=\nabla (u_1\cdot \overline{u_2}),& t\in I_{\max},\ x\in \R^d,\\
(u_1,u_2,u_3)|_{t=0}=(u_{1,0},u_{2,0},u_{3,0}),& x\in \R^d,
\end{cases}    
\end{equation}
where $d\in \{1,2,3\}$ is the spatial dimension, $\partial_t:=\partial/\partial_t$ is the time derivative, $\partial_{x_j}:=\partial/\partial_{x_j}$ is the spatial derivative with respect to the spatial variable $x_j$ with $j\in \{1,\cdots,d\}$, $\nabla:=(\partial_{x_1},\cdots,\partial_{x_d})$ is the nabla, and $\Delta:=\sum_{j=1}^d\partial_{x_j}^2$ is the Laplace operator on $\R^d$. Here $\alpha$, $\beta$, $\gamma\in \R\backslash \{0\}$ are real constants, $I_{\max}:=(-T_{\min},T_{\max})$ is the maximal existence time interval, $T_{\max}\in (0,\infty]$ (resp. $-T_{\min}$) is the forward (resp.backward) existence time of the function $(u_1,u_2,u_3)$, $u_1$, $u_2$, $u_3:I_{\max}\times\R^d\rightarrow \C^d$ are unknown $d$-dimensional complex vector-valued functions and $u_{1,0}$, $u_{2,0}$, $u_{3,0}:\R^d\rightarrow \C^d$ are prescribed $d$-dimensional complex vector-valued functions. 
Let $u_j^{(k)}:I_{\max}\times\mathbb{R}^d\rightarrow \C$ denote the $k$-th component 
of $u_j$ for $j=1,2,3$ and $k=1,\cdots,d$. 
Namely, $u_j=(u_j^{(1)},\cdots, u_j^{(d)})$. 

The system (\ref{NLS}) was first derived from the bi-fluid Euler-Maxwell system by Colin and Colin \cite[P301]{CC04} as a model of laser-plasma interaction, in which $u_1$ and $u_3$ denote the scattered light and the electronic-plasma wave, respectively and $u_2$ is the sum of the incident laser field and the gauged Brillouin component.

The system (\ref{NLS}) is invariant under the following scale transformation
\[
U_{\lambda}(t,x):=\lambda^{-1}U(\lambda^{-2}t,\lambda^{-1}x),\ \ \ (U=(u_1,u_2,u_3),\ \lambda>0).
\]
More presicely if $U$ is a solution to (\ref{NLS}), then so is $U_{\lambda}$ with the rescaled initial data $\lambda^{-1}U_0(\lambda^{-1}x)$. Moreover, we calculate
\[
\|U_{\lambda}(0)\|_{\dot{H}^s}=\lambda^{\frac{d}{2}-1-s}\|U_0\|_{\dot{H}^s},
\]
where $\|\cdot\|_{\dot{H}^s}$ is the $s$-th order and $L^2$-based homogeneous Sobolev norm.
If $s$ satisfies $s=d/2-1$, then $\|U_{\lambda}(0)\|_{\dot{H}^s}=\|U_0\|_{\dot{H}^s}$ for any $\lambda>0$. Therefore $d/2-1$ is called the scaling critical Sobolev index and the current cases where $d=1,2,3$ belong to the $H^1$(energy)-subcritical case.

For $s\in \R$, we denote the inhomogeneous $L^2$-based $s$-th order Sobolev space by $H^{s}=H^{s}(\mathbb{R}^d)$ endowed with the norm
\[
\Vert f\Vert _{H^{s}}:=\left\|\langle\nabla\rangle^sf\right\|_{L^2}=\left\Vert \langle \xi \rangle ^{s}\hat{f}\right\Vert _{L^{2}},
\]
where $\langle \cdot \rangle :=1+\vert \cdot \vert$. 
We introduce the function space $\mathcal{H}^s:=(H^s(\R^d))^{3d}$ 
and 
give the initial data $(u_{1,0},u_{2,0},u_{3,0})$ belonging to $\mathcal{H}^s$. 
In \cite{CC04, HKO22}, the well-posedness of (\ref{NLS}) in $\mathcal{H}^s$ 
was considered by using the energy method, 
and they obtained the local well-posedness for $s>d/2+3$ under the condition $\beta \gamma > 0$. 
The global well-posedness for sufficiently small initial data 
was also obtained in \cite{HKO22} if the condition $\beta +\gamma \ne 0$ holds. 
On the other hand, the low regularity well-posedness of (\ref{NLS}) in $\mathcal{H}^s$ 
was studied in \cite{Hirayama14, HK19, HHO20, HKO21} by using the iteration argument 
in the space of the Fourier restriction norm. 
In particular, the well-posedness in the energy space $\mathcal{H}^1$ was obtained 
as follows. 
\begin{theorem}[Local well-posedness in $\mathcal{H}^1$, \cite{Hirayama14, HKO21}]\label{NLS_LWP_Known}
Let $d=1,2$, or $3$. 
Assume that $\alpha,\beta,\gamma \in \R\backslash \{0\}$ 
satisfy $(\alpha -\gamma)(\beta +\gamma)\ne 0$. 
Then, {\rm (\ref{NLS})} is locally well-posed in $\mathcal{H}^1$. The forward (resp. backward) maximal existence time $T_{\max}$ (resp. $T_{\min}$) depends only on $\|U_0\|_{\mathcal{H}^1}$. Furthermore, if $\alpha$, $\beta$, and $\gamma$ have same sign, 
then the local solution for sufficiently small initial data 
in $\mathcal{H}^1$
can be extended globally in time. 
\end{theorem}
\begin{remark}
The global well-posedness in Theorem~\ref{NLS_LWP_Known} 
was obtained by using the conservation law 
of the charge and the energy, which will be defined below. 
\end{remark}
\begin{remark}\label{gwp_known_rem}
    For the case $d=1$, which is $L^2$-subcritical case, 
    we can obtain the global well-posedness in $\mathcal{H}^1$ 
    without smallness condition on initial data 
    by using the Gagliardo-Nirenberg-Sobolev inequality. 
    However for the case $d=2$ or $3$, 
    the global well-posedness without smallness condition 
    for initial data was not known. 
\end{remark}
\begin{remark}\label{gwp_known_rem_2}
    For the case $d=4$, which is $H^1$-critical case, 
    the global well-posedness and the scattering of solution 
    for small initial data in $\mathcal{H}^1$
    are obtained in \cite{Hirayama14}
\end{remark}

As mentioned above remarks, 
there are only a few results for global solutions.  
In \cite{CC04}, the authors proposed open problems, which are existence and stability of solitary waves for the system (\ref{NLS}). We address these problems in the case of $d=1,2,3$ in this paper.

For vector valued functions $f=(f^{(1)},\cdots,f^{(d)})$ and $g=(g^{(1)},\cdots,g^{(d)})\in (H^1(\R^d))^d$, 
we define the derivatives, gradient, $L^2$-norm, $H^1$-norm, and $L^2$-inner product as
\[
\begin{split}
&\partial_lf:=(\partial_lf^{(1)},\cdots,\partial_lf^{(d)})\in (L^2(\R^d))^d,\ \ 
\nabla f:=(\partial_1f,\cdots ,\partial_df)\in (L^2(\R^d))^{d\times d},\\
&\|f\|_{L^2(\R^d)}^2:=\sum_{k=1}^d\|f^{(k)}\|_{L^2(\R^d)}^2,\ \ 
\|\nabla f\|_{L^2 (\R^d)}^2:=\sum_{l=1}^d\|\partial_l f\|_{L^2(\R^d)}^2
=\sum_{l=1}^d\sum_{k=1}^d\|\partial_lf^{(k)}\|_{L^2(\R^d)}^2,\\
&\|f\|_{H^1(\R^d)}^2:=\|f\|_{L^2(\R^d)}^2+\|\nabla f\|_{L^2(\R^d)}^2,\ \ 
(f,g)_{L^2(\R^d)}:=\sum_{k=1}^d\int_{\R^d}f^{(k)}(x)\overline{g^{(k)}(x)}dx.
\end{split}
\]
We introduce a new unknown function $U:I_{\max}\times\R^d\rightarrow \C^d\times \C^d\times \C^d$ and a new initial data $U_0:\R^d\rightarrow \C^d\times\C^d\times\C^d$ defined respectively by
\[
U:=(u_1,u_2,u_3),\ \ \ \text{and}\ \ 
U_0:=(u_{1,0},u_{2,0},u_{3,0}).
\]
We define the function space $\mathcal{H}^s:=(H^s(\R^d))^{3d}$ with the norm $\|\cdot\|_{\mathcal{H}^s}$ as
\[
\|U\|_{\mathcal{H}^s}^2:=\|u_1\|_{H^s(\R^d)}^2+\|u_2\|_{H^s(\R^d)}^2+\|u_3\|_{H^s(\R^d)}^2. 
\]

We define the charge $Q:(L^2(\R^d))^{3d}\rightarrow \R_{\ge 0}$ and the energy $E:\mathcal{H}^1\rightarrow \R$ for the system (\ref{NLS}) as
\[
\begin{split}
    Q(U)&:=\|u_1\|_{L^2(\R^d)}^2+\frac{1}{2}\|u_2\|_{L^2(\R^d)}^2+\frac{1}{2}\|u_3\|_{L^2(\R^d)}^2,\\
    E(U)&:=L(U)+N(U),
\end{split}
\]
where $L$ and $N$ are the kinetic energy and the potential energy respectively defined by
\[
\begin{split}
    L(U):=&\frac{\alpha}{2}\|\nabla u_1\|_{L^2(\R^d)}^2+\frac{\beta}{2}\|\nabla u_2\|_{L^2(\R^d)}^2
    +\frac{\gamma}{2}\|\nabla u_3\|_{L^2(\R^d)}^2,\\ 
    N(U):=&{\rm Re}\left(u_3,\nabla (u_1\cdot \overline{u_2})\right)_{L^2(\R^d)}.
\end{split}
\]
Furthermore, we define the momentum $\mathbf{P}:\mathcal{H}^1\rightarrow \R^d$ as 
\[
\mathbf{P}(U):=\mathbf{p}(u_1)+ \mathbf{p}(u_2)+ \mathbf{p}(u_3),
\]
where
\[
\mathbf{p}(f):=(P_1(f),\cdots P_d(f)),\ \ P_k(f):=-\frac{1}{2}{\rm Re}(if,\partial_kf)_{L^2(\R^d)}
\ \ 
(k=1,\cdots d)
\]
for $f=(f_1,\cdots f_d)\in (H^1(\R^d))^d$. 
We note that $Q$, $E$, $L$, $N$ are real valued and well-defined functionals on $\mathcal{H}^1$ 
and $\mathbf{P}$ is vector valued and well-defined functional on $\mathcal{H}^1$. 
In particular, $Q(U)$, $E(U)$, and $\mathbf{P}(U)$ are conserved quantities 
under the flow of (\ref{NLS}). 
Namely, 
if $U$ is a smooth and rapidly decaying solution to (\ref{NLS}), 
then it holds that
\[
Q(U(t))=Q(U_0),\ \ E(U(t))=E(U_0),\ \ 
\mathbf{P}(U(t))=\mathbf{P}(U_0)
\]
for all $t\in I_{\max}$. 
For the conservation law of the charge and the energy, 
see Section\ 7 in \cite{Hirayama14}. 
The conservation law of the momentum 
is implied by the following calculation. 
\[
\begin{split}
\frac{d}{dt}P_k(U)
&=-\sum_{j=1}^3{\rm Re}(i\partial_tu_j,\partial_ku_j)_{L^2(\R^d)}\\
&={\rm Re}(\nabla\cdot u_3,\partial_ku_1\cdot u_2)_{L^2(\R^d)}
+{\rm Re}(\nabla\cdot \overline{u_3},\overline{u_1}\cdot \partial_ku_2)_{L^2(\R^d)}
-{\rm Re}(\nabla(u_1\cdot \overline{u_2}),\partial_ku_3)_{L^2(\R^d)}\\
&=0
\end{split}
\]

For $\omega >0$ and $\mathbf{c}\in \R^d$, we define the functionals $S_{\omega,\mathbf{c}}$, 
$K_{\omega,\mathbf{c}}$, and $L_{\omega,\mathbf{c}}$  on $\mathcal{H}^1$ as
\begin{align}
S_{\omega,\mathbf{c}}(U)&:=E(U)+\omega Q(U)+\mathbf{c}\cdot \mathbf{P}(U),\\
K_{\omega,\mathbf{c}}(U)&:=\partial_{\lambda}S_{\omega,\mathbf{c}}(\lambda U)|_{\lambda =1}=2L(U)+3N(U)+2\omega Q(U)+2 \mathbf{c}\cdot \mathbf{P}(U),\\
L_{\omega,\mathbf{c}}(U)&:=
K_{\omega, \mathbf{c}}(U)-3N(U)=2L(U)+2\omega Q(U)+2\mathbf{c}\cdot \mathbf{P}(U).
\label{Nehari-Nonli}
\end{align}
We note that
\begin{equation}\label{SKL_eq}
S_{\omega,\mathbf{c}}(U)=\frac{1}{2}L_{\omega,\mathbf{c}}(U)+N(U)=\frac{1}{3}K_{\omega,\mathbf{c}}(U)+\frac{1}{6}L_{\omega,\mathbf{c}}(U)
\end{equation}
and
\begin{equation}\label{SKL_eq4}
N(U)=-2S_{\omega,\mathbf{c}}(U)+K_{\omega,\mathbf{c}}(U)
\end{equation}
hold. 
When $d=1$, we write $\mathbf{P}(U)=P(U)$, $\mathbf{c}=c$, 
and $\mathbf{c}\cdot \mathbf{P}(U)=cP(U)$. 

We define the derivatives $D_j$ of the functional $S_{\omega,\mathbf{c}}$ as
\[
\langle D_{j}S_{\omega,\mathbf{c}}(\Phi),\psi\rangle 
:=\lim_{\epsilon \rightarrow 0}\frac{S_{\omega,\mathbf{c}}(\Phi +\epsilon E_j(\psi))-S_{\omega,\mathbf{c}}(\Phi)}{\epsilon},\ \ \Phi \in \mathcal{H}^1,\ \psi \in (H^1(\R^d))^{d},\ j=1,2,3, 
\]
where 
\[
E_1(\psi)=(\psi,\mathbf{0},\mathbf{0}),\ E_2(\psi)=(\mathbf{0},\psi,\mathbf{0}),\ E_3(\psi)=(\mathbf{0},\mathbf{0},\psi)
\in \mathcal{H}^1,\ \ \mathbf{0}=(0,\cdots,0)\in (H^1(\R^d))^d.
\]
By the simple calculation, we can see that
\[
\begin{split}
    \langle D_{1}S_{\omega,\mathbf{c}}(\Phi),\psi\rangle
    &={\rm Re}( -\alpha \Delta \varphi_1+2\omega \varphi_1+i(\mathbf{c}\cdot \nabla)\varphi_1-(\nabla \cdot \varphi_3)\varphi_2,\psi)_{L^2(\mathbb{R}^d)},\\
    \langle D_{2}S_{\omega,\mathbf{c}}(\Phi),\psi\rangle
    &={\rm Re}( -\beta \Delta \varphi_2+\omega \varphi_2+i(\mathbf{c}\cdot \nabla)\varphi_2-(\nabla \cdot \overline{\varphi_3})\varphi_1,\psi)_{L^2(\mathbb{R}^d)},\\
    \langle D_{3}S_{\omega,\mathbf{c}}(\Phi),\psi\rangle
    &={\rm Re}( -\gamma \Delta \varphi_3+\omega \varphi_3+i(\mathbf{c}\cdot \nabla)\varphi_3+\nabla (\varphi_1\cdot \overline{\varphi_2}),\psi)_{L^2(\mathbb{R})^d}
\end{split}
\]
for smooth $\Phi=(\varphi_1,\varphi_2,\varphi_3)$ and smooth $\psi$. 
Therefore, solution $\Phi=(\varphi_1,\varphi_2,\varphi_3)$ to 
the elliptic system
\begin{equation}\label{ellip}
\begin{cases}
    -\alpha \Delta \varphi_1+2\omega \varphi_1+i(\mathbf{c}\cdot \nabla)\varphi_1=(\nabla \cdot \varphi_3)\varphi_2,\\
    -\beta \Delta \varphi_2+\omega \varphi_2+i(\mathbf{c}\cdot \nabla)\varphi_2=(\nabla \cdot \overline{\varphi_3})\varphi_1,\\
    -\gamma \Delta \varphi_3+\omega \varphi_3+i(\mathbf{c}\cdot \nabla)\varphi_3=-\nabla (\varphi_1\cdot \overline{\varphi_2})
\end{cases}
\end{equation}
satisfies
\begin{equation}\label{sta_pt}
D_{1}S_{\omega,\mathbf{c}}(\Phi)=
D_{2}S_{\omega,\mathbf{c}}(\Phi)=
D_{3}S_{\omega,\mathbf{c}}(\Phi)=0.
\end{equation}
\begin{definition}[Weak solution to the stationary problem]
We say that a triple of functions $\Phi=(\varphi_1,\varphi_2,\varphi_3)\in \mathcal{H}^1$ is a weak solution to the stationary problem (\ref{ellip}) if it satisfies
\[
\begin{cases}
    \alpha( \nabla \varphi_1,\nabla \psi)_{L^2(\R^d)}
    +2\omega (\varphi_1,\psi)_{L^2(\R^d)}
    +(i(\mathbf{c}\cdot \nabla)\varphi_1,\psi)_{L^2(\R^d)}
    =((\nabla \cdot \varphi_3)\varphi_2,\psi)_{L^2(\mathbb{R}^d)},\\
    \beta( \nabla \varphi_2,\nabla \psi)_{L^2(\R^d)}
    +\omega (\varphi_2,\psi)_{L^2(\R^d)}
    +(i(\mathbf{c}\cdot \nabla)\varphi_2,\psi)_{L^2(\R^d)}
    =((\nabla \cdot \overline{\varphi_3})\varphi_1,\psi)_{L^2(\mathbb{R}^d)},\\
    \gamma( \nabla \varphi_3,\nabla \psi)_{L^2(\R^d)}
    +\omega (\varphi_3,\psi)_{L^2(\R^d)}
    +(i(\mathbf{c}\cdot \nabla)\varphi_3,\psi)_{L^2(\R^d)}=
    -(\nabla (\varphi_1 \cdot \overline{\varphi_2}),\psi)_{L^2(\mathbb{R}^d)}
\end{cases}
\]
for any $\psi \in (H^1(\R^d))^d$, and then write $S_{\omega,\mathbf{c}}'(\Phi)=0$.
\end{definition}
We note that $\Phi\in \mathcal{H}^1$ becomes a weak solution to (\ref{ellip}) 
if and only if $\Phi$ satisfies (\ref{sta_pt}). 
In the next section, we will show that a weak solution $\Phi \in \mathcal{H}^1$ becomes 
smooth and satisfies the elliptic system (\ref{ellip}) 
under the suitable condition for $\omega$ and $\mathbf{c}$ ($\omega >\frac{\sigma|\mathbf{c}|^2}{4}$). 
(See Corollary~\ref{phi_regularity} and Remark~\ref{weak_classi_sol} below.) 
We define the sets of weak solutions to (\ref{ellip}) as 
\[
\begin{split}
    \mathcal{E}_{\omega,\mathbf{c}}
    &:=\{\Psi \in \mathcal{H}^1\backslash \{(\mathbf{0},\mathbf{0},\mathbf{0})\}\ |\ \ 
    S_{\omega,\mathbf{c}}'(\Psi)=0\},\\
    \mathcal{G}_{\omega,\mathbf{c}}
    &:=\{\Psi \in \mathcal{E}_{\omega,\mathbf{c}}\ |\ \ 
    S_{\omega,\mathbf{c}}(\Psi)\le S_{\omega,\mathbf{c}}(\Theta)\ \text{for\ any}\ 
    \Theta \in \mathcal{E}_{\omega,\mathbf{c}}\}. 
\end{split}
\]
We call an element $\Psi \in \mathcal{G}_{\omega,\mathbf{c}}$ a ground state to the elliptic system (\ref{ellip}).
\begin{remark}\label{EK_rem}
If $\Psi \in \mathcal{E}_{\omega,\mathbf{c}}$, 
then it holds $K_{\omega,\mathbf{c}}(\Psi)=0$. 
Indeed, if $\Psi =(\psi_1,\psi_2,\psi_3)\in \mathcal{E}_{\omega,\mathbf{c}}$, then we have
\[
K_{\omega,\mathbf{c}}(\Psi )=\partial_{\lambda}S_{\omega,\mathbf{c}}(\lambda \Psi)\big|_{\lambda =1}
=\sum_{j=1}^3\langle D_jS_{\omega,\mathbf{c}}(\Psi),\psi_j\rangle =0. 
\]
\end{remark}
\begin{remark}\label{sol_scale}
For $\Psi \in \mathcal{H}^1$, we set $\Psi_{\omega}$ for $\omega>0$ as 
\[
\Psi_{\omega}(x):=\omega^{\frac{1}{2}}\Psi (\omega^{\frac{1}{2}}x). 
\]
By a simple calculation, we can see that
$\Psi \in \mathcal{E}_{1,\frac{\mathbf{c}}{\sqrt{\omega}}}$ is equivalent to 
$\Psi_{\omega}\in \mathcal{E}_{\omega,\mathbf{c}}$ and it holds that
\[
Q(\Psi_{\omega})=\omega^{1-\frac{d}{2}}Q(\Psi),\ \ 
E(\Psi_{\omega})=\omega^{2-\frac{d}{2}}E(\Psi),\ \ 
\mathbf{P}(\Psi_{\omega})=\omega^{\frac{3-d}{2}}\mathbf{P}(\Psi).
\]
These identities imply the equality
\[
S_{\omega,\mathbf{c}}(\Psi_{\omega})=\omega^{2-\frac{d}{2}}S_{1,\frac{\mathbf{c}}{\sqrt{\omega}}}(\Psi). 
\]
\end{remark}
\begin{remark}
For $\theta \in \R$, we define an unitary operator $\Lambda (\theta)$ as
\[
\Lambda (\theta)\Phi:=(e^{2i\theta}\varphi_1, e^{i\theta}\varphi_2, e^{i\theta}\varphi_3).  
\]
If we set $U(t,x)=U_{\omega,\mathbf{c}}(t,x):=\Lambda (\omega t)\Phi (x-\mathbf{c}t)$ 
for $\Phi=(\phi_1,\phi_2,\phi_3)\in \mathcal{E}_{\omega,\mathbf{c}}$,  
then $U$ is a solution to the system (\ref{NLS}) with $U_0(x)=\Phi(x)$. 
We call this solution $U$ a 
two-parameter solitary wave solution. 
\end{remark}
We study the minimization problem
\[
\mu_{\omega ,\mathbf{c}}:=
\inf \{S_{\omega,\mathbf{c}}(\Psi)\ |\ \Psi \in \mathcal{H}^1\backslash \{(\mathbf{0},\mathbf{0},\mathbf{0})\},\ 
K_{\omega ,\mathbf{c}}(\Psi )=0\}
\]
and define the set of minimizers
\[
\begin{split}
\mathcal{M}_{\omega,\mathbf{c}}:=
\{\Psi \in \mathcal{H}^1\backslash \{(\mathbf{0},\mathbf{0},\mathbf{0})\}\ |\  
S_{\omega,\mathbf{c}}(\Psi)=\mu_{\omega,\mathbf{c}},\ 
K_{\omega,\mathbf{c}}(\Psi)=0\}. 
\end{split}
\]
We note that if $S_{\omega,\mathbf{c}}(\Psi)<\mu_{\omega,\mathbf{c}}$ 
and $K_{\omega,\mathbf{c}}(\Psi)=0$ hold, 
then $\Psi =(\mathbf{0},\mathbf{0},\mathbf{0})$ by the definition of $\mu_{\omega,\mathbf{c}}$. 
Furthermore, for $\eta >0$, we define a subset of $\mathcal{M}_{\omega,\mathbf{c}}$ as
\[
\begin{split}
\mathcal{M}_{\omega,\mathbf{c}}^*(\eta):=
\{\Psi \in \mathcal{M}_{\omega,\mathbf{c}}\ |\ G(\Psi)\ge \eta\},
\end{split}
\]
where $G:\mathcal{H}^1\rightarrow \R$ is defined by
\begin{equation}
\label{defG}
G(\Psi):=(4-2d)\omega Q(\Psi)+(3-d)\mathbf{c}\cdot \mathbf{P}(\Psi).
\end{equation}
Then
\[
\mathcal{M}_{\omega,c}^*(\eta)
=
\begin{cases}
    \{\Psi \in \mathcal{M}_{\omega,c}\ |\ 
\omega Q(\Psi)+cP(\Psi)\ge \eta\},&\text{if}\ d=1,\\
\{\Psi \in \mathcal{M}_{\omega,\mathbf{c}}\ |\ 
\mathbf{c}\cdot \mathbf{P}(\Psi)\ge \eta\},&\text{if}\ d=2,\\
\emptyset,&\text{if}\ d=3.
\end{cases}
\] 
%\[
%\mathcal{M}_{\omega,c}^*(\eta)
%=\{\Psi \in \mathcal{M}_{\omega,c}|\ 
%\omega Q(\Psi)+cP(\Psi)\ge \eta\}
%\]
%when $d=1$, and
%\[
%\mathcal{M}_{\omega,\mathbf{c}}^*(\eta )
%=\{\Psi \in \mathcal{M}_{\omega,\mathbf{c}}|\ 
%\mathbf{c}\cdot \mathbf{P}(\Psi)\ge \eta\}
%\]
%when $d=2$. We note that $\mathcal{M}_{\omega,\mathbf{c}}^*(\eta )=\emptyset$ when $d=3$. 

We introduce a positive number $\sigma$ given by
\[
\sigma :=\max\left\{\frac{1}{2|\alpha|},\frac{1}{|\beta|},\frac{1}{|\gamma|}\right\}
=\left(\min\{2|\alpha|,|\beta|,|\gamma|\}\right)^{-1}>0
\]
and define
\[
\R^3_+:=\left\{(\alpha,\beta,\gamma)\in \R^3\ |\ \alpha >0,\ \beta>0,\ \gamma>0\right\},\ \ 
\R^3_-:=\left\{(\alpha,\beta,\gamma)\in \R^3\ |\ \alpha <0,\ \beta<0,\ \gamma<0\right\}. 
\]
We note that $(\alpha,\beta,\gamma)\in \R^3_+\cup \R^3_-$, 
then the kinetic energy $L$ becomes positive or negative definite. 
The main results in the present paper are the followings. 
\begin{theorem}\label{GS_exist_th}
Let $d\in \{1,2,3\}$ and $(\alpha,\beta,\gamma)\in \R^3_+\cup \R^3_-$. 
We assume that  
$(\omega,\mathbf{c})\in \R\times \R^d$ 
satisfies
$\omega >\frac{\sigma |\mathbf{c}|^2}{4}$. 
Then we have 
$\mathcal{G}_{\omega,\mathbf{c}}=\mathcal{M}_{\omega,\mathbf{c}}\neq \emptyset$. 
Namely, there exists at least one of ground state. 
\end{theorem}
According to Theorem~\ref{NLS_LWP_Known}, 
the local well-posedness of (\ref{NLS}) in $\mathcal{H}^1$
is obtained under the condition $(\alpha-\gamma)(\beta+\gamma)\ne 0$. 
We note that $\beta+\gamma =0$ does not occur for $(\alpha,\beta,\gamma)\in \R^3_+\cup \R^3_-$. 
\begin{theorem}[Global well-posedness below the ground state level and uniform boundedness]\label{gwp}
Let $d\in \{1,2,3\}$ and 
$(\alpha,\beta,\gamma)\in \R^3_+\cup \R^3_-$. 
We assume that $\alpha -\gamma \ne 0$ and 
$(\omega,\mathbf{c})\in \R\times \R^d$ 
satisfies
$\omega >\frac{\sigma |\mathbf{c}|^2}{4}$.
If $U_0\in \mathcal{H}^1$ satisfies
\[
S_{\omega,\mathbf{c}}(U_0)<\mu_{\omega,\mathbf{c}},\ \ K_{\omega,\mathbf{c}}(U_0)>0,
\]
then $I_{\max}=\R$, that is, the local solution $U(t)\in \mathcal{H}^1$ to (\ref{NLS}) with $U(0)=U_0$ 
can be extended globally in time. Moreover for any $t\in \R$, the estimate $S_{\omega,\mathbf{c}}(U(t))<\mu_{\omega,\mathbf{c}}$ holds.
\end{theorem}
When $d=2$, which is $L^2$-critical case, 
the above global well-posedness result depending on the parameters $\omega, \mathbf{c}$ can be written as the following form without the parameters.
\begin{corollary}[Global well-posedness below the ground state without parameters in $2d$]
\label{gwp_2d}
Let $d=2$ and 
$(\alpha,\beta,\gamma)\in \R^3_+\cup \R^3_-$ satisfies $\alpha -\gamma \ne 0$. 
If $U_0\in \mathcal{H}^1$ satisfies 
\begin{equation}\label{gwp_2d_cond}
Q(U_0)<Q(\Phi)-E(\Phi)
\end{equation}
for some $\Phi\in \mathcal{M}_{1,\mathbf{c}_0}$ 
and some $\mathbf{c}_0\in \R^2$
with $|\mathbf{c}_0|<\frac{2}{\sqrt{\sigma}}$, namely
\[
Q(U_0)<\sup_{|\mathbf{c}_0|<\frac{2}{\sqrt{\sigma}}}
\sup_{\Phi\in \mathcal{M}_{1,\mathbf{c}_0}}\left(Q(\Phi)-E(\Phi)\right),
\]
then the local solution $U(t)\in \mathcal{H}^1$ to (\ref{NLS}) with $U(0)=U_0$ 
can be extended globally in time.
\end{corollary}
\begin{remark}
    When $d=2$, the estimate $Q(\Phi)-E(\Phi)>0$ holds for $\Phi \in \mathcal{M}_{1,\mathbf{c}_0}$. This can be seen as follows.
    For $\Phi \in \mathcal{M}_{1,\mathbf{c}_0}$, the estimate
    \[
    \mathbf{c}_0\cdot \mathbf{P}(\Phi)=-2E(\Phi)
    \]
    holds by Proposition~\ref{LN_iden} below. 
    Therefore, we have
    \[
    0<\mu_{1,\mathbf{c}_0}
    =E(\Phi)+Q(\Phi)+\mathbf{c}_0\cdot \mathbf{P}(\Phi)
    =Q(\Phi)-E(\Phi)
    \]
    %This says that $Q(\Phi)-E(\Phi)>0$ for $\Phi \in \mathcal{M}_{1,\mathbf{c}_0}$ 
    because $\mu_{1,\mathbf{c}_0}>0$ by Proposition~\ref{mu_prop} below.
\end{remark}
\begin{remark}
If $\Phi \in \mathcal{M}_{1,\mathbf{0}}$, then
the condition (\ref{gwp_2d_cond}) is equivalent to 
$Q(U_0)<Q(\Phi)$ 
because $\Phi \in \mathcal{M}_{1,\mathbf{0}}$ with $d=2$ 
satisfies $E(\Phi)=0$ by 
Proposition~\ref{LN_iden} below.
\end{remark}
\begin{remark}
Because the smallness condition for initial data 
is not necessary except (\ref{gwp_2d_cond}) 
in Corollary~\ref{gwp_2d}, 
this result is the improvement 
of global result in Theorem~\ref{NLS_LWP_Known} for $d=2$. 
(See, also Remark~\ref{gwp_known_rem}.)
\end{remark}
Next we state our result of orbital stability of the set $\mathcal{M}_{\omega,\mathbf{c}}^*(\eta)$ for arbitrary $\eta>0$, which solves one of the open problems proposed in \cite[P301]{CC04}.
\begin{theorem}[Stability of the set $\mathcal{M}_{\omega,\mathbf{c}}^*(\eta)$]
\label{stability}
Let $d\in \{1,2\}$ and $(\alpha,\beta,\gamma)\in \R^3_+\cup \R^3_-$. 
We assume that $\alpha -\gamma \ne 0$ and 
$(\omega,\mathbf{c})\in \R\times \R^d$ 
satisfies
$\omega >\frac{\sigma |\mathbf{c}|^2}{4}$. 
For $\eta >0$, if
$\mathcal{M}_{\omega,\mathbf{c}}^*(\eta)\ne \emptyset$ 
holds,  
then the set $\mathcal{M}_{\omega,\mathbf{c}}^*(\eta)$ is 
orbitally stable. 
More precisely, for any $\epsilon >0$, 
there exists $\delta >0$ such that 
if $U_0\in \mathcal{H}^1$ satisfies
\[
\inf_{\Phi \in \mathcal{M}_{\omega,\mathbf{c}}^*(\eta)}\|U_0-\Phi\|_{\mathcal{H}^1}<\delta,
\]
then the solution $U(t)$ to {\rm (\ref{NLS})} with $U(0)=U_0$ exists globally in time 
and satisfies
\[
\sup_{t\ge 0}\inf_{\Phi \in \mathcal{M}_{\omega,\mathbf{c}}^*(\eta)}\|U(t)-\Phi\|_{\mathcal{H}^1}<\epsilon. 
\]
\end{theorem}
Next we give orbital stability of the set $\mathcal{M}_{\omega,c}$ 
for $1$-dimensional setting
as a Corollary of Theorem~\ref{stability}. 
\begin{corollary}\label{stab_1d_result}
Let $d=1$. Then the ground-state set $\mathcal{M}_{\omega,c}$ is orbitally stable if $|c|$ is small enough. 
\end{corollary}
To prove the stability of the ground-states set, 
we will use the variational argument for 
the function $\mu(\omega,\mathbf{c}):=\mu_{\omega,\mathbf{c}}$ with respect to $(\omega, \mathbf{c})$, 
whose argument is used in \cite{CO06}. 
In this argument, the second derivatives of $\mu$ play an important role. 
However in our problem, there is a difficulty that the ground state $\Phi\in \mathcal{M}_{\omega,\mathbf{c}}$ 
cannot be written explicitly. 
Therefore, we cannot calculate the second derivatives of $\mu$ specifically. 
To avoid this difficulty, we use the argument by using a so-called scaling curve, which was used by Hayashi in \cite{Hayashi22}. Here the scaling curve is given as a map $\tau \mapsto \left((\sqrt{\omega}-\tau)^2,\frac{\mathbf{c}}{\sqrt{\omega}}
(\sqrt{\omega} -\tau)\right)$. 
By considering the restriction of $\mu$ on the scaling curve, 
we can treat $\mu$ such as a polynomial function 
and calculate the second derivative. 
Thanks to these properties and the Pohozaev's identity (Proposition~\ref{LN_iden}) below, 
we can give the sufficient condition for ground states to establish the stability. 
Furthermore, by checking the sufficient condition when $d=1$ (see, Proposition~\ref{1d_stability}), 
we can obtain the stability of the ground-states set $\mathcal{M}_{\omega,\mathbf{c}}$ 
for small $\mathbf{c}$. 
In particular, our results contain the stability of the set of traveling waves. 

From the next section, 
we always consider the case $(\alpha,\beta,\gamma)\in \R^3_+$ 
because we can assume that $\alpha >0$ without loss of 
generality. 
Indeed, if $(u_1,u_2,u_3)$ is a solution to (\ref{NLS}), 
then $(\widetilde{u}_1,\widetilde{u}_2,\widetilde{u}_3)$ defined by 
$\widetilde{u}_j(t,x):=u_j(-t,-x)$\ $(j=1,2,3)$ is a solution to
\[
\begin{cases}
i\partial_t\widetilde{u}_1-\alpha \Delta \widetilde{u}_1=-(\nabla \cdot \widetilde{u}_3)\widetilde{u}_2,&-t\in I_{\max},\ x\in \R^d,\\
i\partial_t\widetilde{u}_2-\beta \Delta \widetilde{u}_2=-(\nabla \cdot \overline{\widetilde{u}_3})\widetilde{u}_1,&-t\in I_{\max},\ x\in \R^d,\\
i\partial_t\widetilde{u}_3-\gamma \Delta \widetilde{u}_3=\nabla (\widetilde{u}_1\cdot \overline{\widetilde{u}_2}),&-t\in I_{\max},\ x\in \R^d,\\
(\widetilde{u}_1(0,x),\widetilde{u}_2(0,x),\widetilde{u}_3(0,x))=(u_{1,0}(-x),u_{2,0}(-x),u_{3,0}(-x)), &x\in \R^d.
\end{cases}    
\]
If $\alpha<0$, then it suffices to consider this Cauchy problem instead of (\ref{NLS}). 

We introduce similar results such as existence of ground state and stability of solitary wave for other Schr\"odinger type equations. Guo-Wu \cite{GW95} and Colin-Ohta \cite{CO06} studied orbital stability of a two-parameter family of solitary waves $\{u_{\omega,c}\}_{(\omega,c)\in \R\times\R}$ to the single derivative nonlinear Schr\"odinger equation in one spatial dimension
\begin{equation}
\label{sdnls}
i\partial_tu +\partial_x^2u+i\partial_x(|u|^2u)=0,\ \ \ (t,x)\in \R\times \R,
\end{equation}
where $u_{\omega,c}$ for $(\omega,c)\in \R\times\R$ is the form of
\[
u_{\omega,c}(t,x):=\phi_{\omega,c}(x-ct)\exp\left\{i\omega t+i\frac{c}{2}(x-ct)-\frac{3i}{4}\int_{-\infty}^{x-ct}|\phi_{\omega,c}(\eta)|^2d\eta\right\}.
\]
Here $\phi_{\omega,c}$ is the unique ground state of the elliptic equation
\[
-\partial_{x}^2\phi+\left(\omega-\frac{c^2}{4}\right)\phi+\frac{c}{2}\phi^3-\frac{3}{16}\phi^5=0.
\]
If $c^2<4\omega$, then $\phi_{\omega,c}$ shows an exponential decay and is explicitly written as
\[
\phi_{\omega,c}(x):=\left[\frac{\sqrt{\omega}}{4\omega-c^2}\left\{\cosh (\sqrt{(4\omega-c^2)x})-\frac{c}{\sqrt{4\omega}}\right\}\right]^{-1/2}
\]
and the charge of $\phi_{\omega,c}$ is given by
\[
\|\phi_{\omega,c}\|_{L^2(\R)}^2=8\tan^{-1}\sqrt{\frac{\sqrt{4\omega}+c}{\sqrt{4\omega}-c}}<4\pi.
\]
The orbital stability of those solitons was proved in \cite{GW95} for $c<0$ and $c^2<4\omega$ and in \cite{CO06} for any $c^2<4\omega$. See Kwon-Wu \cite{KW18} for the endpoint case $c^2=4\omega$. Fukaya-Hayashi-Inui \cite{FHI17} gave a sufficient condition for global existence of solutions to the generalized version of (\ref{sdnls}) by a variational argument (see also \cite{W15, BP22}). For the system of nonlinear Schr\"odinger equations without derivative nonlinearities, see \cite{A18, CCO06, KO22, FHI23} and their references. 
In particular, in \cite{FHI23}, 
similar results as Theorems~\ref{GS_exist_th} 
and ~\ref{gwp} are obtained for 
the following system of Schr\"odinger equations:
\begin{equation}\label{two_NLS}
\begin{cases}
i\partial_tu+\frac{1}{2m}\Delta u=\overline{u}v,\\
i\partial_tv+\frac{1}{2M}\Delta v=u^2,
\end{cases}
(t,x)\in \R\times \R^d,
\end{equation}
where $m,M\in \R\backslash\{0\}$ are constants. In \cite{FHI23}, the authors also proved 
the global well-posedness of (\ref{two_NLS}) with $d=4$ 
(which is $L^2$-critical case)
for arbitrary large initial data with modification of oscillations.  

We use the shorthand $A\lesssim B$ to denote the estimate $A\leq CB$ with
some constant $C>0$. The notation $A\sim B$ stands for $A\lesssim B$
and $B\lesssim A$. 
We give a notation table for reader's convenience. 
\begin{table}[h]
\caption{Notation table}\label{tab1}
\vspace{.3cm}
\renewcommand{\arraystretch}{1.1}
 \begin{tabularx}{\linewidth}{|c|X|}
%\vspace{.3cm}
\hline
$\mathcal{H}^s$& $s$-th order Sobolev space $(H^s(\R^d))^{3d}$\\
$Q(U)$ & Charge $\|u_1\|_{L^2(\R^d)}^2+\frac{1}{2}\|u_2\|_{L^2(\R^d)}^2+\frac{1}{2}\|u_3\|_{L^2(\R^d)}^2$\\
$L(U)$ & Kinetic energy $\frac{\alpha}{2}\|\nabla u_1\|_{L^2(\R^d)}^2+\frac{\beta}{2}\|\nabla u_2\|_{L^2(\R^d)}^2
    +\frac{\gamma}{2}\|\nabla u_3\|_{L^2(\R^d)}^2$\\
$N(U)$ & Potential energy ${\rm Re}\left(u_3,\nabla (u_1\cdot \overline{u_2})\right)_{L^2(\R^d)}$\\
$\mathbf{P}(U)$ & Momentum $(P_1(u_1),\cdots,P_d(u_1))$+$(P_1(u_2),\cdots,P_d(u_2))$+$(P_1(u_3),\cdots,P_d(u_3))$\\
$P_k(u_j)$ & $k$-th component of the momentum 
$-\frac{1}{2}\sum_{j=1}^3{\rm Re}(iu_j,\partial_ku_j)_{L^2(\R^d)}$\\
$\sigma$ & $\sigma:=\max\left\{\frac{1}{2|\alpha|},\frac{1}{|\beta|},\frac{1}{|\gamma|}\right\}=\min(2|\alpha|,|\beta|,|\gamma|)^{-1}>0$\\
$\omega$ & Frequency parameter satisfying $\omega>\frac{\sigma|\mathbf{c}|^2}{4}$\\
$\mathbf{c}$ & Propagation speed satisfying $\omega>\frac{\sigma|\mathbf{c}|^2}{4}$\\
$S_{\omega,\mathbf{c}}(U)$ & Action $E(U)+\omega Q(U)+\mathbf{c}\cdot \mathbf{P}(U)$\\
$K_{\omega,\mathbf{c}}(U)$ & Nehari functional $\frac{d}{d\lambda}S_{\omega,\mathbf{c}}(\lambda U)\big|_{\lambda=1}$\\
$L_{\omega, \mathbf{c}}(U)$ & Nehari functional without nonlinearity $K_{\omega, \mathbf{c}}(U)-3N(U)$\\
$\mathcal{G}_{\omega,\mathbf{c}}$ & Set of ground state $\{\Psi \in \mathcal{E}_{\omega,\mathbf{c}}\ |\ \ 
    S_{\omega,\mathbf{c}}(\Psi)\le S_{\omega,\mathbf{c}}(\Theta)\ \text{for\ any}\ 
    \Theta \in \mathcal{E}_{\omega,\mathbf{c}}\}$\\
$\mu_{\omega ,\mathbf{c}}$& Mountain pass level $\inf \{S_{\omega,\mathbf{c}}(\Psi)\ |\ \Psi \in \mathcal{H}^1\backslash \{(\mathbf{0},\mathbf{0},\mathbf{0})\},\ 
K_{\omega ,\mathbf{c}}(\Psi )=0\}$\\
$\mathcal{M}_{\omega,\mathbf{c}}$ & Set of minimizers $\{\Psi \in \mathcal{H}^1\backslash \{(\mathbf{0},\mathbf{0},\mathbf{0})\}\ |\  
S_{\omega,\mathbf{c}}(\Psi)=\mu_{\omega,\mathbf{c}},\ 
K_{\omega,\mathbf{c}}(\Psi)=0\}$\\
%$\mathcal{A}^{+}_{\omega,\mathbf{c}}$ & Potential well $\big\{\Psi\in \mathcal{H}^1\backslash \{(\mathbf{0},\mathbf{0},\mathbf{0})\}\ |\ S_{\omega,\mathbf{c}}(\Psi)< \mu_{\omega,\mathbf{c}},\ K_{\omega,\mathbf{c}}(\Psi)> 0\big\}$\\
%$\mathcal{A}^{-}_{\omega,\mathbf{c}}$ & Exterior of the potential well $\big\{\Psi\in \mathcal{H}^1\backslash \{(\mathbf{0},\mathbf{0},\mathbf{0})\}\ |\ S_{\omega,\mathbf{c}}(\Psi)< \mu_{\omega,\mathbf{c}},\ K_{\omega,\mathbf{c}}(\Psi)< 0\big\}$\\
%$\mathbf{G}$ & The $\alg$-valued Gram matrix defined as $\bG_{i,j}=k(x_i,x_j)$ for given samples $x_1,\ldots,x_n\in\mcl{X}$\\
%$F$ & The discrete Fourier transform (DFT) matrix, whose $(i,j)$-entry is $\omega^{(i-1)(j-1)}/\sqrt{p}$\\
\hline
 \end{tabularx}
\renewcommand{\arraystretch}{1} 
\end{table}
%
%\begin{theorem}\label{insta}
%Let $d=3$. We assume $\alpha$, $\beta$, $\gamma >0$, 
%$\omega >\sigma c^2$, and $\mathcal{M}_{\omega,c}^{**}\ne \emptyset$. 
%Then, the set $\mathcal{M}_{\omega,c}^{**}$ is 
%unstable.
%\end{theorem}
%The strategy (new technique) of proof
%{\color{red}Write contruction of this paper}
%
\section{Properties of weak solutions to the stationary problem}

In this section we study properties of weak solutions (not necessarily ground state) to stationary problem.
\begin{proposition}\label{GS_regu}
Let $d\in \{1,2,3\}$ and $\alpha,\beta,\gamma>0$. 
Assume that $(\omega,\mathbf{c})\in \R\times \R^d$ 
satisfies $\omega >\frac{\sigma|\mathbf{c}|^2}{4}$. 
If $\Phi =(\varphi_1,\varphi_2,\varphi_3)\in \mathcal{H}^1$ be a weak solution to {\rm (\ref{ellip})}, 
then $\Phi \in (W^{2,p}(\R^d))^{3d}$ holds for any $p\in [1,2]$, where $W^{2,p}(\R^d)$ denotes the second order $L^p$-based Sobolev space, that is, $W^{2,p}(\R^d):=\{f\in L^p(\R^d)\ |\ \sum_{|\alpha|\le 2}\|\partial_x^{\alpha}f\|_{L^p}<\infty\}$. 
In particular, $\varphi_j \in (H^2(\R^d))^d$ $(j=1,2,3)$ holds. 
\end{proposition}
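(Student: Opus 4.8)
The plan is to read (\ref{ellip}) componentwise as a system whose principal parts are decoupled constant-coefficient elliptic operators, $L_j\varphi_j=F_j$, and to run a standard elliptic bootstrap. Writing $\kappa_j>0$ for the dispersion coefficient of the $j$-th equation produced by the traveling-wave reduction, the linear part has the (schematic) form $L_j=-\kappa_j\Delta+i\,\mathbf{c}\cdot\nabla+\omega$, while $F_j=F_j(\Phi,\nabla\Phi)$ gathers the quadratic nonlinear terms, which carry \emph{at most one} derivative. The first step is to exploit the hypothesis $\omega>\frac{\sigma|\mathbf{c}|^2}{4}$: completing the square in the Fourier symbol,
\[
\kappa_j|\xi|^2-\mathbf{c}\cdot\xi+\omega=\kappa_j\Bigl|\xi-\tfrac{\mathbf{c}}{2\kappa_j}\Bigr|^2+\omega-\tfrac{|\mathbf{c}|^2}{4\kappa_j},
\]
shows that this symbol is bounded below by a positive constant — which is exactly what the hypothesis guarantees, for each $j$ — and grows like $|\xi|^2$. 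Hence $L_j$ is invertible and the multipliers of $L_j^{-1}$ and of $\partial_k\partial_l L_j^{-1}$ are smooth and satisfy the Mikhlin--H\"ormander conditions, giving the a priori estimate $\|\varphi_j\|_{W^{2,p}}\lesssim\|F_j\|_{L^p}$ for every $1<p<\infty$. This reduces the whole proposition to controlling $F_j$ in $L^p$.

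Next I would bootstrap the integrability of $F_j$. Starting from $\Phi\in\mathcal{H}^1\subset(H^1(\R^d))^{3d}$ with $d\le 3$, Gagliardo--Nirenberg--Sobolev gives $\Phi\in L^q$ for $q\in[2,\tfrac{2d}{d-2}]$ (all $q<\infty$ if $d\le 2$) while $\nabla\Phi\in L^2$; since $F_j$ is quadratic with at most one derivative, H\"older's inequality then places $F_j$ in some $L^{p_0}$ with $p_0>1$. Feeding this into the elliptic estimate yields $\Phi\in(W^{2,p_0})^{3d}$, which re-embeds into a space with strictly better integrability for both $\Phi$ and $\nabla\Phi$. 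Iterating finitely many times raises the exponent until $F_j\in L^2$, whence $\Phi\in(H^2(\R^d))^{3d}$. Since $d\le 3$ we then also have the embedding $H^2\hookrightarrow L^\infty$, so $\Phi\in L^2\cap L^\infty$ and $\nabla\Phi\in L^2\cap L^6$, and hence $F_j\in L^p$ for the whole range $p\in(1,2]$, which recovers $W^{2,p}$ on that range.

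The endpoint $p=1$ must be argued separately, since the Calder\'on--Zygmund/Mikhlin theory behind the a priori estimate is not available on $L^1$; I expect this to be the main obstacle. By the previous step $F_j\in L^1$ (apply Cauchy--Schwarz to its two $L^2$ factors), and reading the equation as $\Delta\varphi_j=\kappa_j^{-1}(\omega\varphi_j-i\,\mathbf{c}\cdot\nabla\varphi_j-F_j)$ shows $\Delta\varphi_j\in L^1$ once the lower-order terms are placed in $L^1$. The delicate point is to upgrade this to \emph{each} second derivative $\partial_k\partial_l\varphi_j\in L^1$, because recovering individual second derivatives from the Laplacian is not an $L^1$-bounded operation. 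To close this I would use the exponentially decaying Bessel-type kernel $G_j$ of $L_j^{-1}$ and write $\varphi_j=G_j*F_j$: the far field gives an $L^1$ convolution, while the non-integrable near-field singularity of $\partial_k\partial_l G_j$ is handled by interpolation, exploiting that the bootstrap already furnishes $F_j\in L^1\cap L^{r}$ for some $r>1$.
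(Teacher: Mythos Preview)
Your approach is essentially the paper's: complete the square in the symbol to see that each $L_j^{-1}$ gains two derivatives, then feed H\"older--Sobolev bounds on the quadratic nonlinearity into the elliptic estimate $\|L_j^{-1}F\|_{W^{2,p}}\lesssim\|F\|_{L^p}$. The only real difference is execution. Where you iterate, the paper is direct: it fixes $p_0:=\max\{1,d/2\}$, observes that a \emph{single} application already yields $\Phi\in W^{2,p_0}\hookrightarrow L^q$ for every $q\in[2,\infty]$, and then one further H\"older step (with $1/q=1/p-1/2$, $q\ge 2$) covers the whole range $p\in[1,2]$ at once---no bootstrap loop. On the other hand, the paper invokes the multiplier bound at $p=1$ and the borderline embedding $W^{2,d/2}\hookrightarrow L^\infty$ without comment, whereas you correctly isolate $p=1$ as a genuine endpoint issue (Mikhlin--H\"ormander fails on $L^1$); your extra care there is not redundant and, if anything, patches something the paper glosses over.
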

\begin{proof}
We set $p_0:=\max\{1,\frac{d}{2}\}$ and $q_0:=2p_0/(2-p_0)$. 
Then, we have
\[
\frac{1}{p_0}=\frac{1}{2}+\frac{1}{q_0}. 
\]
We note that $1\le p_0<2$, $q_0\ge 2$ and
\[
1\ge \frac{d}{2} -\frac{d}{q_0}
\]
hold because $d\le 3$. 
By the H\"older inequality and the Sobolev inequality, 
we have
\[
\|(\nabla \cdot \varphi_3)\varphi_2\|_{L^{p_0}}
\le \|\nabla \cdot \varphi_3\|_{L^2}\|\varphi_2\|_{L^{q_0}}
\lesssim \|\varphi_3\|_{H^1}\|\varphi_2\|_{H^1}<\infty. 
\]
By the first equation of (\ref{ellip}), we obtain
\[
\|\varphi_1\|_{W^{2,p_0}}=
\|(-\alpha \Delta +2\omega +i\mathbf{c}\cdot \nabla )^{-1}
\{(\nabla \cdot \varphi_3)\varphi_2\}\|_{W^{2,p_0}}
\lesssim \|(\nabla \cdot \varphi_3)\varphi_2\|_{L^{p_0}}<\infty
\]
because
\[
\alpha |\xi|^2+2\omega -\mathbf{c}\cdot \xi
=\alpha \left|\xi -\frac{\mathbf{c}}{2\alpha}\right|^2+2\left(\omega -\frac{|\mathbf{c}|^2}{8\alpha}\right) 
\sim 1+|\xi|^2
\]
holds when $\omega >\frac{\sigma |\mathbf{c}|^2}{4}$. 
Therefore, we have $\varphi_1\in (W^{2,p_0}(\R^d))^d$. 
By the same argument, we also obtain 
$\varphi_2, \varphi_3\in (W^{2,p_0}(\R^d))^d$. 
Since $p_0\ge \frac{d}{2}$, 
the relation
$W^{2,p_0}(\R^d)\subset L^q(\R^d)$ 
holds for any $q\in [2,\infty]$ 
by the Sobolev embedding theorem. 
This implies
 $\Phi \in (L^q(\R^d))^{3d}$ 
for any $q\in [2,\infty]$. 

For any $p\in [1,2]$, we put
\[
\frac{1}{q}:=\frac{1}{p}-\frac{1}{2}. 
\]
We note that $q\ge 2$. 
Therefore, we have $\varphi_2 \in L^q(\R^d)$ and
$
\|\varphi_1\|_{W^{2,p}}
\lesssim
\|(\nabla \cdot \varphi_3)\varphi_2\|_{L^{p}}
<\infty
$
by the same argument as above. 
Similarly, we also have 
$
\|\varphi_2\|_{W^{2,p}}<\infty,\ \ 
\|\varphi_3\|_{W^{2,p}}<\infty. 
$
\end{proof}
\begin{corollary}\label{phi_regularity}
Let $d\in \{1,2,3\}$ and $\alpha,\beta,\gamma>0$. 
Assume that $(\omega,\mathbf{c})\in \R\times \R^d$ 
satisfies $\omega >\frac{\sigma |\mathbf{c}|^2}{4}$. 
If $\Phi =(\varphi_1,\varphi_2,\varphi_3)\in \mathcal{H}^1$ be a weak solution to {\rm (\ref{ellip})}, 
then it holds 
\[
\varphi_1,\ \varphi_2,\ \varphi_3\in 
\left(\bigcap_{m=1}^{\infty}H^m(\R^d)\right)^d.  
\]
\end{corollary}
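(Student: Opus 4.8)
The plan is to bootstrap the regularity established in Proposition \ref{GS_regu} via an induction on the Sobolev order $m$. The base case $m=1,2$ is already contained in Proposition \ref{GS_regu}, which gives $\Phi\in(H^2(\R^d))^{3d}$ together with the key fact that $\Phi\in(L^q(\R^d))^{3d}$ for every $q\in[2,\infty]$. The inductive hypothesis is that $\varphi_1,\varphi_2,\varphi_3\in(H^m(\R^d))^d$ for some $m\ge 2$. Since the elliptic operators $-\alpha\Delta+2\omega+i\mathbf{c}\cdot\nabla$ (and its analogues with $\beta,\gamma$) have symbols comparable to $1+|\xi|^2$ under the hypothesis $\omega>\frac{\sigma|\mathbf{c}|^2}{4}$, they act as isomorphisms $H^{s+2}\to H^s$ for every $s$. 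Thus it suffices to show that each nonlinear term, e.g.\ $(\nabla\cdot\varphi_3)\varphi_2$, belongs to $(H^m(\R^d))^d$; inverting the elliptic operator then gains two derivatives and yields $\varphi_j\in(H^{m+2}(\R^d))^d\subset(H^{m+1}(\R^d))^d$, closing the induction.

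The core of the argument is therefore the algebra/product estimate for the nonlinearity. First I would observe that $\nabla\cdot\varphi_3\in(H^{m-1}(\R^d))$ by the inductive hypothesis, and I want to control the product of this factor with $\varphi_2\in(H^m(\R^d))^d$. The clean way to do this is to invoke a fractional Leibniz (Kato--Ponce type) estimate, or more elementarily to expand $\partial^\alpha\{(\nabla\cdot\varphi_3)\varphi_2\}$ for multi-indices $|\alpha|\le m$ by the Leibniz rule and bound each resulting term $(\partial^{\alpha_1}\nabla\cdot\varphi_3)(\partial^{\alpha_2}\varphi_2)$ in $L^2$ using H\"older with exponents chosen so that the higher-order factor sits in $L^2$ and the lower-order factor sits in $L^\infty$. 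The crucial input that makes every such split finite is precisely the $L^\infty$ (and general $L^q$) control from Proposition \ref{GS_regu}: whenever a derivative lands so heavily on one factor that it saturates the available $H^m$ regularity, the complementary factor carries at most $\lfloor m/2\rfloor$ derivatives and, being in a Sobolev space that embeds into $L^\infty$ for the low-order part, can be placed in $L^\infty$. In dimensions $d\le 3$ the endpoint cases are harmless because $H^2\hookrightarrow L^\infty$ already holds.

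Concretely, I would state and use the standard estimate $\|fg\|_{H^m}\lesssim \|f\|_{H^m}\|g\|_{L^\infty}+\|f\|_{L^\infty}\|g\|_{H^m}$ (valid for $m\ge 0$), applied with $f=\nabla\cdot\varphi_3$ and $g=\varphi_2$; since $\nabla\cdot\varphi_3\in H^m$ requires $\varphi_3\in H^{m+1}$, which the inductive hypothesis at level $m+1$ would need, it is cleaner to estimate the product in $H^{m-1}$ and gain back one order, i.e.\ show $(\nabla\cdot\varphi_3)\varphi_2\in H^{m-1}$ so that $\varphi_1\in H^{m+1}$, advancing the induction by one. The $L^\infty$ factors are controlled uniformly by Proposition \ref{GS_regu}, and the $H^{m-1}$ factors by the inductive hypothesis, so both terms are finite. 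Repeating the identical argument for $\varphi_2$ and $\varphi_3$ (whose equations have nonlinearities of the same structural form, products of one first-order derivative factor and one undifferentiated factor) closes the induction and yields $\varphi_j\in(H^m(\R^d))^d$ for all $m$, hence $\varphi_j\in\bigl(\bigcap_{m=1}^\infty H^m(\R^d)\bigr)^d$.

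The main obstacle I anticipate is purely bookkeeping: organizing the Leibniz expansion so that in every term at least one factor lands in a space embedding into $L^\infty$, while tracking the shift between the derivative count on $\varphi_3$ (which appears through $\nabla\cdot\varphi_3$, costing one order) and the target space. There is no analytic difficulty beyond the product estimate and the elliptic isomorphism property, both of which are standard; the hypothesis $\omega>\frac{\sigma|\mathbf{c}|^2}{4}$ guarantees the latter exactly as in the proof of Proposition \ref{GS_regu}.
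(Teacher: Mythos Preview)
Your proposal is correct and follows the same inductive bootstrap as the paper, but the paper organizes the step more cleanly in two ways that eliminate the bookkeeping you anticipate. First, since $m\ge 2>d/2$ the paper simply invokes the Banach algebra property of $H^m(\R^d)$ in place of your Kato--Ponce/Leibniz-plus-H\"older splitting. Second, and this is the point you miss, the paper exploits an \emph{ordering}: it upgrades $\varphi_3$ first, because the third equation's nonlinearity is $\nabla(\varphi_1\cdot\overline{\varphi_2})$ with the derivative outside the product, so that $\partial_k(-\gamma\Delta+\omega+i\mathbf{c}\cdot\nabla)^{-1}\nabla$ is an order-zero operator and one only needs $\varphi_1\cdot\overline{\varphi_2}\in H^m$, which is immediate from the algebra property and the inductive hypothesis. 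With $\varphi_3\in H^{m+1}$ in hand, $\nabla\cdot\varphi_3\in H^m$, and then $(\nabla\cdot\varphi_3)\varphi_2\in H^m$ again by the algebra property, giving $\varphi_1,\varphi_2\in H^{m+1}$ with no endpoint analysis. Your symmetric treatment works too, but at $m=2$ in $d\in\{2,3\}$ the term $(\nabla\cdot\varphi_3)(\partial_k\varphi_2)$ does not split as $L^\infty\times L^2$ (since $\nabla\cdot\varphi_3\in H^1\not\hookrightarrow L^\infty$), and you must fall back on the intermediate $L^q$ embeddings you allude to; the paper's ordering sidesteps this entirely.
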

\begin{proof}
We first prove that 
if $\varphi_j\in (H^m(\R^d))^d$ $(j=1,2,3)$
 for some $m\in \N$ with $m\ge 2$, 
then $\varphi_j \in (H^{m+1}(\R^d))^d$ $(j=1,2,3)$ 
holds. 
We note that 
$H^m(\R^d)$ is Banach algebra 
since $d<4$ and $m\ge 2$. 
Therefore, by the third equation of (\ref{ellip}) and 
the H\"older inequality, we have
\[
\begin{split}
\|\partial_k \varphi_3\|_{H^m}
&=\|\partial_k (-\gamma \Delta +\omega +i\mathbf{c}
\cdot \nabla)^{-1}\nabla (\varphi_1\cdot \overline{\varphi_2})\|_{H^m}
\lesssim \|\varphi_1 \cdot \varphi_2\|_{H^m}
\lesssim \|\varphi_1\|_{H^m}\|\varphi_2\|_{H^m}
<\infty
\end{split}
\]
for $k=1,\cdots, d$ because $\omega >\frac{\sigma |\mathbf{c}|^2}{4}$. 
This means that $\varphi_3\in (H^{m+1}(\R^d))^d$. 
Next, the first equation of (\ref{ellip})
and the H\"older inequality, we have
\[
\begin{split}
\|\partial_k \varphi_1\|_{H^m}
&=\|\partial_k (-\alpha \Delta +2\omega +i\mathbf{c}
\cdot \nabla)^{-1}\{(\nabla\cdot \varphi_3)\varphi_2\}\|_{H^m}
\lesssim \|(\nabla\cdot \varphi_3)\varphi_2\|_{H^m}
\lesssim \|\varphi_3\|_{H^{m+1}}\|\varphi_2\|_{H^m}
<\infty
\end{split}
\]
for $k=1,\cdots,d$ because $\omega >\frac{\sigma |\mathbf{c}|^2}{4}$. 
This means that $\varphi_1\in (H^{m+1}(\R^d))^d$. 
Similarly, we also have $\varphi_2\in (H^{m+1}(\R^d))^d$. 

By Proposition~\ref{GS_regu}, 
we have $\varphi_1$, $\varphi_2, \varphi_3\in (H^2(\R^d))^d$. Therefore, we obtain $\varphi_1$, $\varphi_2, \varphi_3\in (H^m(\R^d))^d$ for any $m\in \N$ by the induction. 
\end{proof}
\begin{remark}\label{weak_classi_sol}
By Corollary~\ref{phi_regularity}, 
the weak solution $\Phi =(\varphi_1,\varphi_2,\varphi_3)\in \mathcal{H}^1$ is also a classical solution 
and derivatives of $\Phi$ vanish at infinity
because the embedding
\[
H^{m}(\R^d)
\subset \left\{f\in C^r(\R^d)\left|\ \displaystyle \lim_{|x|\rightarrow \infty}\partial^{\alpha}f(x)=0,\ \alpha \in \Z^d:\text{multi\ index\ with}\ |\alpha|\le r\right.\right\}
\]
holds for $m>\frac{d}{2}+r$. 
In particular, $\varphi_3$ satisfies
\begin{equation}\label{phi_3_vanish_infi}
\lim_{|x|\rightarrow \infty}\nabla \cdot \varphi_3(x)=0.
\end{equation}
\end{remark}
The following proposition (Pohozaev's identity) plays an important role to prove 
the stability in Section~\ref{GWP_Sta}. 
\begin{proposition}[Pohozaev's identity]\label{LN_iden}
Let $d\in \{1,2,3\}$ and $\alpha,\beta,\gamma>0$. 
Assume that $(\omega,\mathbf{c})\in \R\times \R^d$ 
satisfies $\omega >\frac{\sigma |\mathbf{c}|^2}{4}$.
If $\Phi=(\varphi_1,\varphi_2,\varphi_3)\in \mathcal{H}^1$ 
is a weak solution to {\rm (\ref{ellip})}, 
then $\Phi$ satisfies
\[
2L(\Phi)+\left(\frac{d}{2}+1\right)N(\Phi)
+\mathbf{c}\cdot \mathbf{P}(\Phi)=0. 
\]
\end{proposition}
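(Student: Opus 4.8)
The plan is to exploit the scaling structure of the action $S_{\omega,\mathbf{c}}$ whose Euler--Lagrange system is {\rm (\ref{ellip})}. Since $\Phi$ is a weak solution, it is a critical point of $S_{\omega,\mathbf{c}}$, and by Corollary~\ref{phi_regularity} and Remark~\ref{weak_classi_sol} it is in fact a classical solution lying in $(\bigcap_{m}H^m(\R^d))^{3d}$ with all derivatives decaying at infinity; this regularity is precisely what will legitimize the integrations by parts below. The key idea is to test criticality against the $L^2$-mass-preserving dilation $\Phi^\lambda(x):=\lambda^{d/2}\Phi(\lambda x)$. Because this flow leaves the mass $M(\Phi)$ invariant, the otherwise obstructive mass term drops out upon differentiating in $\lambda$, and one is left with exactly the three functionals appearing in the statement.

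Concretely, I would record how each homogeneous piece of the action transforms under $\Phi\mapsto\Phi^\lambda$: the quadratic kinetic part $L$ (two derivatives) scales like $\lambda^{2}$, the cubic derivative nonlinearity $N$ (three fields, one derivative) scales like $\lambda^{d/2+1}$, the momentum coupling $\mathbf{c}\cdot\mathbf{P}$ (quadratic, one derivative) scales like $\lambda$, while $M$ is invariant. Hence
\[
S_{\omega,\mathbf{c}}(\Phi^\lambda)=\lambda^{2}L(\Phi)+\lambda^{\frac{d}{2}+1}N(\Phi)+\omega M(\Phi)+\lambda\,\mathbf{c}\cdot\mathbf{P}(\Phi),
\]
and differentiating at $\lambda=1$, together with the fact that $\lambda=1$ is a critical value of $\lambda\mapsto S_{\omega,\mathbf{c}}(\Phi^\lambda)$, yields $2L(\Phi)+(\frac{d}{2}+1)N(\Phi)+\mathbf{c}\cdot\mathbf{P}(\Phi)=0$, which is the assertion. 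Equivalently, this is the classical Pohozaev computation obtained by multiplying the $j$-th equation of {\rm (\ref{ellip})} by $x\cdot\nabla\overline{\varphi_j}$, taking real parts, and summing over $j$.

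The main obstacle is rigorously justifying that $\lambda=1$ is critical, that is, $\frac{d}{d\lambda}S_{\omega,\mathbf{c}}(\Phi^\lambda)\big|_{\lambda=1}=0$: the tangent field $\frac{d}{2}\Phi+x\cdot\nabla\Phi$ of the curve $\lambda\mapsto\Phi^\lambda$ is not obviously an admissible element of $\mathcal{H}^1$ without a weighted bound on $\Phi$. I would circumvent this by carrying out the equivalent multiplier computation against the truncated field $\chi(x/R)\,x\cdot\nabla\overline{\varphi_j}$, where $\chi$ is a smooth cutoff equal to $1$ near the origin; this test function has compact support, so the integration by parts produces no boundary terms. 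Letting $R\to\infty$, every integral of the form $\int_{\R^d}\chi(x/R)\,x\cdot\nabla g\,dx$ with $g$ a quadratic or cubic expression in $(\Phi,\nabla\Phi)$ converges to $-d\int_{\R^d}g\,dx$, since $g\in L^1(\R^d)$ (all such products lie in $L^1$ thanks to $\Phi\in(\bigcap_m H^m(\R^d))^{3d}$), while the commutator terms carrying $\nabla\chi(\cdot/R)$ are supported in $\{|x|\sim R\}$ and vanish in the limit as tails of convergent $L^1$-integrals. Checking that each boundary-layer term dies and that the bulk integrals assemble into the correctly weighted functionals is the technical heart of the argument; the remaining algebra that collects the weights into $2L+(\frac{d}{2}+1)N+\mathbf{c}\cdot\mathbf{P}=0$ is then routine.
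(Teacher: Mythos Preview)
Your proposal is correct and follows the same scaling route as the paper: introduce the $L^2$-invariant dilation $\Phi^\lambda(x)=\lambda^{d/2}\Phi(\lambda x)$, record the scaling exponents $L\mapsto\lambda^2L$, $N\mapsto\lambda^{d/2+1}N$, $Q\mapsto Q$, $\mathbf{P}\mapsto\lambda\mathbf{P}$, and read off the identity from $\frac{d}{d\lambda}S_{\omega,\mathbf{c}}(\Phi^\lambda)\big|_{\lambda=1}=0$.

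The only genuine difference is in how the vanishing of this $\lambda$-derivative is justified. The paper argues directly: writing $\frac{d}{d\lambda}S_{\omega,\mathbf{c}}(\Phi^\lambda)\big|_{\lambda=1}=\sum_j\langle D_jS_{\omega,\mathbf{c}}(\Phi),\frac{d}{2}\varphi_j+x\cdot\nabla\varphi_j\rangle$, it asserts that the tangent field $\frac{d}{2}\Phi+x\cdot\nabla\Phi$ lies in $\mathcal{H}^1$, invoking the $H^2$-regularity from Proposition~\ref{GS_regu}. (Strictly speaking, membership of $x\cdot\nabla\Phi$ in $\mathcal{H}^1$ also uses the spatial decay recorded in the Appendix, not just $H^2$.) You instead bypass this weighted regularity issue entirely with the cutoff multiplier $\chi(x/R)\,x\cdot\nabla\overline{\varphi_j}$ and pass to the limit $R\to\infty$. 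Your route is more self-contained and is the standard way to make Pohozaev identities rigorous without a priori weighted bounds; the paper's route is shorter but leans on the full regularity/decay theory developed elsewhere in the paper. Either way the algebra and the conclusion are identical. (One cosmetic point: the paper denotes the mass by $Q$, not $M$.)
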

\begin{proof}
For $\lambda >0$, we set
\begin{equation}\label{scale_l2inv}
\Phi^{\lambda}(x):=\lambda^{\frac{d}{2}}\Phi (\lambda x). 
\end{equation}
Then, we have
\[
\frac{d}{d\lambda}S_{\omega,\mathbf{c}}(\Phi^{\lambda})\big|_{\lambda=1}
=\sum_{j=1}^3\left.\left\langle (D_jS_{\omega,\mathbf{c}})(\Phi^{\lambda}), 
\frac{d}{d\lambda}\Phi^{\lambda}\right\rangle\right|_{\lambda =1}
=\sum_{j=1}^3\left\langle D_jS_{\omega,\mathbf{c}}(\Phi), \frac{d}{2}\Phi+\nabla \cdot \Phi\right\rangle. 
\]
Because $\Phi$ is a weak solution to (\ref{ellip}), 
we have $\varphi_1$, $\varphi_2$, $\varphi_3\in (H^2(\R^d))^d$ 
by Proposition~\ref{GS_regu}. 
Namely, $\frac{d}{2}\Phi +\nabla \cdot \Phi \in \mathcal{H}^1$ holds. 
Therefore, 
we obtain
\[
\frac{d}{d\lambda}S_{\omega,\mathbf{c}}(\Phi^{\lambda})\big|_{\lambda=1}
=0. 
\]
On the other hand, we can easily see that
\[
L(\Phi^{\lambda})=\lambda^2L(\Phi),\ \ 
N(\Phi^{\lambda})=\lambda^{\frac{d}{2}+1}N(\Phi),\ \ 
Q(\Phi^{\lambda})=Q(\Phi),\ \ 
\mathbf{P}(\Phi^{\lambda})=\lambda\mathbf{P}(\Phi). 
\]
It implies that
\[
\begin{split}
\frac{d}{d\lambda}S_{\omega,\mathbf{c}}(\Phi^{\lambda})|_{\lambda=1}
&=\frac{d}{d\lambda}\left.\left(L(\Phi^{\lambda})+N(\Phi^{\lambda})+\omega Q(\Phi^{\lambda})
+\mathbf{c}\cdot \mathbf{P}(\Phi^{\lambda})\right)\right|_{\lambda =1}\\
&=2L(\Phi)+\left(\frac{d}{2}+1\right)N(\Phi)
+\mathbf{c}\cdot \mathbf{P}(\Phi)
\end{split}
\]
and we get the desired identity. 
\end{proof}
\section{Existence of a ground state to the stationary problem}
In this section we give a proof of existence of a ground state to (\ref{ellip}), which addresses a open problem proposed by Colin-Colin \cite{CC04}.

We prove coercivity of the functional $L_{\omega, \mathbf{c}}$ given by (\ref{Nehari-Nonli}), which plays an important role to prove Proposition \ref{mu_prop} and Lemma \ref{1d_stab_lemm} below. 
\begin{proposition}[Coecivity of the functional $L_{\omega,\mathbf{c}}$]\label{L_bdd}
Let $d\in \{1,2,3\}$ and $\alpha,\beta,\gamma>0$. 
Assume that $(\omega,\mathbf{c})\in \R\times \R^d$ 
satisfies $\omega >\frac{\sigma |\mathbf{c}|^2}{4}$.
Then, there exists a constant $C=C_{\omega}>0$ independent of $\mathbf{c}$ such that the estimate
\[
L_{\omega, \mathbf{c}}(U)\ge 
C(4\omega-\sigma |\mathbf{c}|^2)\|U\|_{\mathcal{H}^1}^2 
\]
holds for any $U\in \mathcal{H}^1$, where $L_{\omega, \mathbf{c}}$ is defined by (\ref{Nehari-Nonli}).
\end{proposition}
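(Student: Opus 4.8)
The plan is to use that the functional $L_{\omega,\mathbf{c}}$ given by (\ref{Nehari-Nonli}) is the quadratic part of the action $S_{\omega,\mathbf{c}}$, namely $L_{\omega,\mathbf{c}}(U)=L(U)+\omega Q(U)+\mathbf{c}\cdot\mathbf{P}(U)$, and to exploit that this is a real quadratic form that is \emph{diagonal} in the three components $u_1,u_2,u_3$ of $U=(u_1,u_2,u_3)$, since $Q$ and $\mathbf{P}$ couple each $u_j$ only to itself. Passing to the Fourier side via Plancherel, the form becomes
\[
L_{\omega,\mathbf{c}}(U)=\sum_{j=1}^3\kappa_j\int_{\R^d}\bigl(a_j|\xi|^2-\mathbf{c}\cdot\xi+m_j\omega\bigr)\,|\widehat{u_j}(\xi)|^2\,d\xi,
\]
with fixed positive constants $\kappa_j$, coefficients $(a_1,a_2,a_3)=(\alpha,\beta,\gamma)$, and weights $m_j\in\{1,2\}$ dictated by (\ref{ellip}). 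Since $\|U\|_{\mathcal{H}^1}^2\sim\sum_j\int_{\R^d}(1+|\xi|^2)|\widehat{u_j}|^2\,d\xi$, it suffices to prove the \emph{pointwise} symbol bound
\[
P_j(\xi):=a_j|\xi|^2-\mathbf{c}\cdot\xi+m_j\omega\ \ge\ C_\omega\,(4\omega-\sigma|\mathbf{c}|^2)\,(1+|\xi|^2)\qquad(\xi\in\R^d,\ j=1,2,3)
\]
with $C_\omega>0$ independent of $\mathbf{c}$.

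First I would complete the square exactly as in the computation in the proof of Proposition~\ref{GS_regu}, writing
\[
P_j(\xi)=a_j\Bigl|\xi-\tfrac{\mathbf{c}}{2a_j}\Bigr|^2+\delta_j,\qquad \delta_j:=m_j\omega-\frac{|\mathbf{c}|^2}{4a_j}.
\]
By the very definition of $\sigma$ (chosen so that $\tfrac{1}{m_ja_j}\le\sigma$ for each $j$) one has $\tfrac{|\mathbf{c}|^2}{4a_j}\le m_j\tfrac{\sigma|\mathbf{c}|^2}{4}$, whence the constant term obeys $\delta_j\ge\tfrac{m_j}{4}(4\omega-\sigma|\mathbf{c}|^2)\ge\tfrac14(4\omega-\sigma|\mathbf{c}|^2)$; in particular $P_j>0$ on all of $\R^d$ under the standing hypothesis $\omega>\tfrac{\sigma|\mathbf{c}|^2}{4}$.

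The main obstacle is that this nonnegative quadratic is centred not at the origin but at the shifted frequency $\xi=\tfrac{\mathbf{c}}{2a_j}$, so its gradient part does not by itself dominate $|\xi|^2$; I must transfer control to $|\xi|^2$ while keeping the final constant independent of $\mathbf{c}$. Here the hypothesis enters quantitatively through $|\mathbf{c}|^2<\tfrac{4\omega}{\sigma}$, which bounds the shift by a $\mathbf{c}$-free, $\omega$-dependent quantity. Concretely I would estimate
\[
1+|\xi|^2\le 1+2\Bigl|\xi-\tfrac{\mathbf{c}}{2a_j}\Bigr|^2+\frac{|\mathbf{c}|^2}{2a_j^2}\le 2\Bigl|\xi-\tfrac{\mathbf{c}}{2a_j}\Bigr|^2+\Bigl(1+\frac{2\omega}{\sigma a_j^2}\Bigr),
\]
and then compare with $P_j=a_j|\xi-\tfrac{\mathbf{c}}{2a_j}|^2+\delta_j$ term by term. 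Using $4\omega-\sigma|\mathbf{c}|^2\le 4\omega$ together with $\delta_j\ge\tfrac14(4\omega-\sigma|\mathbf{c}|^2)$, it is enough to fix $C_\omega>0$ so small that $8\,C_\omega\,\omega\le\min_j a_j$ (matching the coefficients of $|\xi-\tfrac{\mathbf{c}}{2a_j}|^2$) and $C_\omega\,(1+\tfrac{2\omega}{\sigma a_j^2})\le\tfrac14$ for $j=1,2,3$ (matching the constant terms, the factor $4\omega-\sigma|\mathbf{c}|^2$ cancelling). Such a $C_\omega$ depends only on $\omega,\alpha,\beta,\gamma$ and not on $\mathbf{c}$, and the two inequalities deliver $P_j(\xi)\ge C_\omega(4\omega-\sigma|\mathbf{c}|^2)(1+|\xi|^2)$; summing the $j$-contributions and undoing Plancherel yields the claim. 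The only points needing care are the bookkeeping of the weights $m_j,\kappa_j$ coming from (\ref{Nehari-Nonli}) and (\ref{ellip}), and the observation that the sharp prefactor is the threshold quantity $4\omega-\sigma|\mathbf{c}|^2$ rather than a fixed multiple of $\omega$, which is forced by the degeneracy $\min_\xi P_j=0$ at $\omega=\tfrac{\sigma|\mathbf{c}|^2}{4}$.
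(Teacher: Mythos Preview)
Your argument is correct and reaches the same conclusion as the paper, but by a genuinely different route. The paper stays in physical space and completes the square there: it writes $\mathbf{c}\cdot\mathbf{p}(u_j)$ in terms of auxiliary nonnegative quantities $I_{j,k}:=\frac{1}{A_j}\bigl\|A_j\partial_ku_j-\tfrac{c_k}{4}iu_j\bigr\|_{L^2}^2$, then chooses the free parameters $A_j=A_j(\omega,|\mathbf{c}|^2)$ explicitly so that the remaining coefficients in front of $\|\nabla u_j\|_{L^2}^2$ and $\|u_j\|_{L^2}^2$ are each bounded below by a $\mathbf{c}$-independent multiple of $4\omega-\sigma|\mathbf{c}|^2$. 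You instead pass to the Fourier side via Plancherel and prove the equivalent pointwise symbol bound directly. Your version makes the appearance of the threshold quantity $4\omega-\sigma|\mathbf{c}|^2$ (as $4\min_j\delta_j/m_j$) more transparent and avoids introducing auxiliary parameters; on the other hand, the paper's physical-space sum-of-squares decomposition (recorded as~(\ref{L_fomula})) is reused verbatim in Steps~4 and~6 of the proof of Proposition~\ref{GS_exist}, where that specific form is convenient for applying the Brezis--Lieb lemma and weak lower semicontinuity term by term. One minor bookkeeping slip: from (\ref{Nehari-Nonli}) one has $L_{\omega,\mathbf{c}}(U)=2\bigl(L(U)+\omega Q(U)+\mathbf{c}\cdot\mathbf{P}(U)\bigr)$ rather than $L+\omega Q+\mathbf{c}\cdot\mathbf{P}$; the extra factor of~$2$ is of course immaterial for the coercivity estimate.
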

\begin{proof}
Let $U=(u_1,u_2,u_3)\in \mathcal{H}^1$ and $\mathbf{c}=(c_1,\cdots ,c_d)\in \R^d$. 
For parameters $A_1,A_2,A_3>0$, we set
\[
I_{j,k}:=\frac{1}{A_j}
\left\|A_j\partial_ku_j-\frac{c_k}{4}iu_j\right\|_{L^2(\R^d)}^2\ \ 
(j=1,2,3,\ k=1,\cdots, d). 
\]
By a direct computation, for $j\in \{1,2,3\}$, the following identities hold:
\[
\mathbf{c}\cdot \mathbf{p}(u_j)
=-\frac{1}{2}\sum_{k=1}^dc_k{\rm Re}(iu_j,\partial_ku_j)_{L^2(\R^d)}
=\sum_{k=1}^dI_{j,k}-A_j\|\nabla u_j\|_{L^2(\R^d)}^2
-\frac{|\mathbf{c}|^2}{16A_j}\|u_j\|_{L^2(\R^d)}^2.
\]
This implies that by (\ref{Nehari-Nonli}), the following holds:
\[
\begin{split}
L_{\omega, \mathbf{c}}(U)
%&=2L(U)+2\omega Q(U)+2\mathbf{c}\cdot \mathbf{P}(U)\\
&=(\alpha -2A_1)\|\nabla u_1\|_{L^2(\R^d)}^2
+(\beta -2A_2)\|\nabla u_2\|_{L^2(\R^d)}^2
+(\gamma -2A_3)\|\nabla u_3\|_{L^2(\R^d)}^2\\
&\ \ \ \ +\left(2\omega -\frac{|\mathbf{c}|^2}{8A_1}\right)\|u_1\|_{L^2(\R^d)}^2
+\left(\omega -\frac{|\mathbf{c}|^2}{8A_2}\right)\|u_2\|_{L^2(\R^d)}^2
+\left(\omega -\frac{|\mathbf{c}|^2}{8A_3}\right)\|u_3\|_{L^2(\R^d)}^2\\
&\ \ \ \ +\sum_{k=1}^d2(I_{1,k}+I_{2,k}+I_{3,k}). 
\end{split}
\]
Because $\omega >\frac{\sigma |\mathbf{c}|^2}{4}$, there exist $A_1,A_2,A_3>0$ such that
\begin{equation}\label{coef_posi}
\begin{split}
    &\alpha -2A_1>0,\ \ \beta -2A_2>0,\ \ \gamma -2A_3>0,\\
    &2\omega -\frac{|\mathbf{c}|^2}{8A_1}>0,\ \ 
    \omega -\frac{|\mathbf{c}|^2}{8A_2}>0,\ \ 
    \omega -\frac{|\mathbf{c}|^2}{8A_3}>0.
\end{split}
\end{equation}
Indeed, if we choose $A_1,A_2,A_3>0$ as
\[
A_1=\frac{1}{4}\left(\alpha +\frac{|\mathbf{c}|^2}{8\omega}\right),\ \ 
A_2=\frac{1}{4}\left(\beta +\frac{|\mathbf{c}|^2}{4\omega}\right),\ \ 
A_3=\frac{1}{4}\left(\gamma +\frac{|\mathbf{c}|^2}{4\omega}\right),
\]
then (\ref{coef_posi}) holds and
we can get
\[
L_{\omega,\mathbf{c}}(U)
\gtrsim (4\omega -\sigma |\mathbf{c}|^2)\|U\|_{\mathcal{H}^1}^2, 
\]
where the implicit constant does not depend on $\mathbf{c}$. 
\end{proof}
Next we prove positivity of $\mu_{\omega,\mathbf{c}}$ and a property of the Nehari functional $K_{\omega,\mathbf{c}}$ under $\omega>\frac{\sigma|\mathbf{c}|^2}{4}$.
\begin{proposition}\label{mu_prop}
Let $d\in \{1,2,3\}$ and $\alpha,\beta,\gamma>0$. 
Assume that $(\omega,\mathbf{c})\in \R\times \R^d$ 
satisfies $\omega >\frac{\sigma |\mathbf{c}|^2}{4}$.
Then, the following properties hold:
\begin{itemize}
    \item[{\rm (i)}] $\mu_{\omega,\mathbf{c}}>0$. 
    \item[{\rm (ii)}] If 
    $U\in \mathcal{H}^1\backslash \{(\mathbf{0},\mathbf{0},\mathbf{0})\}$ satisfies $K_{\omega,\mathbf{c}}(U)<0$\ $($resp\ $\le 0)$, then $L_{\omega,\mathbf{c}}(U)>6\mu_{\omega,\mathbf{c}}$\ $($resp\ $\ge 6\mu_{\omega,\mathbf{c}})$. 
\end{itemize}
\end{proposition}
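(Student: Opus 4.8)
The plan is to exploit the scaling structure of the action $S_{\omega,\mathbf{c}}$ along the fibers $t\mapsto tU$, together with the coercivity of $L_{\omega,\mathbf{c}}$ from Proposition~\ref{L_bdd}. Throughout I use that $L_{\omega,\mathbf{c}}$ is quadratic and the nonlinear part $N$ is cubic (trilinear) in $U$, so that along a fiber $S_{\omega,\mathbf{c}}(tU)=\frac12 t^2 L_{\omega,\mathbf{c}}(U)+t^3 N(U)$ and $K_{\omega,\mathbf{c}}(tU)=t^2 L_{\omega,\mathbf{c}}(U)+3t^3 N(U)$, and that on the Nehari set $\{K_{\omega,\mathbf{c}}=0\}$ one has the identity $S_{\omega,\mathbf{c}}(U)=\frac16 L_{\omega,\mathbf{c}}(U)$, whence $\mu_{\omega,\mathbf{c}}=\frac16\inf\{L_{\omega,\mathbf{c}}(U):U\neq 0,\ K_{\omega,\mathbf{c}}(U)=0\}$.

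For (i), first I would record the two basic bounds: Proposition~\ref{L_bdd} gives $L_{\omega,\mathbf{c}}(U)\gtrsim \|U\|_{\mathcal{H}^1}^2$, while H\"older's inequality and the Sobolev embedding $H^1(\R^d)\hookrightarrow L^4(\R^d)$ (valid for $d\le 3$) give $|N(U)|\lesssim \|U\|_{\mathcal{H}^1}^3$ for the cubic form. On the Nehari set one has $L_{\omega,\mathbf{c}}(U)=-3N(U)=3|N(U)|$, so combining the two bounds yields $\|U\|_{\mathcal{H}^1}^2\lesssim L_{\omega,\mathbf{c}}(U)=3|N(U)|\lesssim \|U\|_{\mathcal{H}^1}^3$, which forces a uniform lower bound $\|U\|_{\mathcal{H}^1}\ge \delta>0$ for every nonzero $U$ with $K_{\omega,\mathbf{c}}(U)=0$. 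Feeding this back into coercivity gives $L_{\omega,\mathbf{c}}(U)\ge c\delta^2>0$ on the Nehari set, hence $\mu_{\omega,\mathbf{c}}=\frac16\inf L_{\omega,\mathbf{c}}\ge \frac16 c\delta^2>0$.

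For (ii), fix $U\in\mathcal{H}^1\setminus\{0\}$ with $K_{\omega,\mathbf{c}}(U)<0$ and consider $j(t):=S_{\omega,\mathbf{c}}(tU)$, noting that $K_{\omega,\mathbf{c}}(tU)=t\,j'(t)$ for $t>0$. Since $L_{\omega,\mathbf{c}}(U)>0$ by coercivity, the inequality $K_{\omega,\mathbf{c}}(U)=L_{\omega,\mathbf{c}}(U)+3N(U)<0$ forces $N(U)<0$, so $K_{\omega,\mathbf{c}}(tU)=t^2(L_{\omega,\mathbf{c}}(U)+3tN(U))$ has a unique positive zero $t^{\ast}=-L_{\omega,\mathbf{c}}(U)/(3N(U))$, and the condition $K_{\omega,\mathbf{c}}(U)<0$ is exactly the statement $t^{\ast}<1$. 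The point $t^{\ast}U$ is a nonzero element of the Nehari set, so $S_{\omega,\mathbf{c}}(t^{\ast}U)\ge \mu_{\omega,\mathbf{c}}$; using the fiber identity $S_{\omega,\mathbf{c}}(t^{\ast}U)=\frac16 (t^{\ast})^2 L_{\omega,\mathbf{c}}(U)$ then gives $L_{\omega,\mathbf{c}}(U)\ge 6\mu_{\omega,\mathbf{c}}/(t^{\ast})^2>6\mu_{\omega,\mathbf{c}}$, where the final strict inequality uses $t^{\ast}<1$ together with $\mu_{\omega,\mathbf{c}}>0$ from (i). The borderline case $K_{\omega,\mathbf{c}}(U)\le 0$ is treated identically with $t^{\ast}\le 1$, yielding the non-strict conclusion $L_{\omega,\mathbf{c}}(U)\ge 6\mu_{\omega,\mathbf{c}}$.

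I expect the only genuine obstacle to be the lower bound in (i): one must ensure the cubic estimate $|N(U)|\lesssim\|U\|_{\mathcal{H}^1}^3$ is uniform and that coercivity keeps the Nehari set bounded away from the origin; once that separation is secured, both parts are short. Part (ii) is essentially bookkeeping along the fiber, the single point requiring care being the equivalence between the sign of $K_{\omega,\mathbf{c}}(U)$ and the location of $t^{\ast}$ relative to $1$.
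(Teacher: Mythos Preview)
Your proposal is correct and follows essentially the same route as the paper: both parts use the fiberwise scaling $t\mapsto tU$, the identity $\mu_{\omega,\mathbf{c}}=\tfrac16\inf\{L_{\omega,\mathbf{c}}:K_{\omega,\mathbf{c}}=0\}$, and the coercivity from Proposition~\ref{L_bdd}; for (i) the paper likewise combines $L_{\omega,\mathbf{c}}(\Psi)=-3N(\Psi)\lesssim\|\Psi\|_{\mathcal{H}^1}^3$ with coercivity to bound the Nehari set away from the origin, and for (ii) it rescales by exactly your $t^\ast=-L_{\omega,\mathbf{c}}(U)/(3N(U))\in(0,1)$. The only cosmetic difference is that the paper deduces the strict inequality in (ii) from $\lambda^2 L_{\omega,\mathbf{c}}(U)<L_{\omega,\mathbf{c}}(U)$ (using $L_{\omega,\mathbf{c}}(U)>0$) rather than from $\mu_{\omega,\mathbf{c}}>0$, so (ii) there does not formally rely on (i).
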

\begin{proof}
We first prove (i). 
By (\ref{SKL_eq}) and the definition of $\mu_{\omega,\mathbf{c}}$, 
it holds
\begin{equation}\label{mu_L_eq}
\mu_{\omega,\mathbf{c}}=\frac{1}{6}\inf \{L_{\omega,\mathbf{c}}(\Psi)|\ \Psi \in \mathcal{H}^1\backslash \{(\mathbf{0},\mathbf{0},\mathbf{0})\},\ 
K_{\omega,\mathbf{c}}(\Psi)=0\}. 
\end{equation}
Therefore, it suffices to show that there exists $C_1>0$ such that
\[
L_{\omega,\mathbf{c}}(\Psi)\ge C_1
\]
holds
for any $\Psi=(\psi_1,\psi_2,\psi_3)\in \mathcal{H}^1\backslash \{(\mathbf{0},\mathbf{0},\mathbf{0})\}$ with $K_{\omega,\mathbf{c}}(\Psi)=0$. 
If $K_{\omega,\mathbf{c}}(\Psi)=0$, then by the definition of $L_{\omega,\mathbf{c}}$, 
the H\"older inequality, and the Sobolev inequality, 
we have
\[
L_{\omega,\mathbf{c}}(\Psi)=-3N(\Psi)
\lesssim \|\nabla \cdot \psi_3\|_{L^2}\|\psi_1\|_{L^4}\|\psi_2\|_{L^4}
\lesssim \|\Psi\|_{\mathcal{H}^1}^3. 
\]
Furthermore by Proposition~\ref{L_bdd}, we obtain
\[
\|\Psi\|_{\mathcal{H}^1}^2\lesssim L_{\omega,\mathbf{c}}(\Psi)\lesssim \|\Psi\|_{\mathcal{H}^1}^3.
\]
This implies that there exists $\widetilde{C_1}>0$ such that $\|\Psi\|_{\mathcal{H}^1}\ge \widetilde{C_1}$ for any $\Psi \in \mathcal{H}^1\backslash \{(\mathbf{0},\mathbf{0},\mathbf{0})\}$ with $K_{\omega,\mathbf{c}}(\Psi)=0$. 
By setting $C_1:=\widetilde{C_1}^2C(4\omega -\sigma |\mathbf{c}|^2)$, where $C=C_{\omega}>0$ appears in Proposition~\ref{L_bdd}, we get
$L_{\omega,\mathbf{c}}(\Psi)\ge C_1$. 

Next, we prove (ii). 
We assume $U\in \mathcal{H}^1\backslash \{(\mathbf{0},\mathbf{0},\mathbf{0})\}$ and $K_{\omega,\mathbf{c}}(U)<0$. For $\lambda\in \R$, by a simple calculation, we have
\[
K_{\omega,\mathbf{c}}(\lambda U)
=L_{\omega,\mathbf{c}}(\lambda U)+3N(\lambda U)
=\lambda^2L_{\omega,\mathbf{c}}(U)+3\lambda^3N(U).
\]
Therefore, if we set $\lambda$ as
\[
\lambda :=-\frac{L_{\omega,\mathbf{c}}(U)}{3N(U)}, 
\]
then $K_{\omega,\mathbf{c}}(\lambda U)=0$ holds. We can see that $\lambda \in (0,1)$ from the fact that $L_{\omega,\mathbf{c}}(U)+3N(U)=K_{\omega,\mathbf{c}}(U)<0$ and $L_{\omega,\mathbf{c}}(U)>0$. 
As a result, by using (\ref{mu_L_eq}), we get
\[
\mu_{\omega,\mathbf{c}}\le \frac{1}{6}L_{\omega,\mathbf{c}}(\lambda U)
=\frac{\lambda^2}{6}L_{\omega,\mathbf{c}}(U)<\frac{1}{6}L_{\omega,\mathbf{c}}(U).
\]
If we assume $K_{\omega,\mathbf{c}}(U)\le 0$ instead of $K_{\omega,\mathbf{c}}(U)<0$, 
then there is a possibility that $\lambda=1$ holds. 
Therefore, we can only get $\mu_{\omega,\mathbf{c}}\le \frac{1}{6}L_{\omega,\mathbf{c}}(U)$. 
\end{proof}
\begin{proposition}[Existence of a minimizer of $\mathcal{M}_{\omega,\mathbf{c}}$]\label{GS_exist}
Let $d\in \{1,2,3\}$ and $\alpha,\beta,\gamma>0$. 
Assume that $(\omega,\mathbf{c})\in \R\times \R^d$ 
satisfies $\omega >\frac{\sigma |\mathbf{c}|^2}{4}$.
Let $\{U_n\}\subset \mathcal{H}^1$ be 
a sequence satisfying
\[
\lim_{n\rightarrow \infty}S_{\omega,\mathbf{c}}(U_n)=\mu_{\omega,\mathbf{c}},\ \ 
\lim_{n\rightarrow \infty}K_{\omega,\mathbf{c}}(U_n)=0. 
\]
Then, there exist $\{y_n\}\subset \R^d$  such that 
the sequence $\{U_n(\cdot -y_n)\}$ 
has a subsequence which converges to some $V\in \mathcal{H}^1\backslash \{(\mathbf{0},\mathbf{0},\mathbf{0})\}$
strongly in $\mathcal{H}^1$. 
Furthermore it holds that $V\in \mathcal{M}_{\omega,\mathbf{c}}$. Namely $\mathcal{M}_{\omega,\mathbf{c}}$ is not empty.

In addition, if $\{U_n\}$ satisfies 
\begin{equation}\label{ex_gs_add_cond}
\limsup_{n\rightarrow \infty}G(U_n)\ge \eta
\end{equation}
for some $\eta >0$, where $G$ is defined by (\ref{defG}), then it holds that $V\in \mathcal{M}_{\omega,\mathbf{c}}^*(\eta)$. 
\end{proposition}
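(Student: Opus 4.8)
The plan is to run a concentration--compactness argument in the spirit of Lions on the sequence $\{U_n\}$. The starting point is the relation (\ref{SKL_eq}), which expresses $S_{\omega,\mathbf{c}}=\tfrac{1}{6}L_{\omega,\mathbf{c}}+\tfrac{1}{3}K_{\omega,\mathbf{c}}$. Combined with the hypotheses $S_{\omega,\mathbf{c}}(U_n)\to\mu_{\omega,\mathbf{c}}$ and $K_{\omega,\mathbf{c}}(U_n)\to0$, this immediately gives $L_{\omega,\mathbf{c}}(U_n)\to6\mu_{\omega,\mathbf{c}}$, and by the coercivity estimate of Proposition~\ref{L_bdd} we conclude $\sup_n\|U_n\|_{\mathcal{H}^1}<\infty$. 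For a quantitative lower bound I would use $N(U_n)=\tfrac{1}{3}\bigl(K_{\omega,\mathbf{c}}(U_n)-L_{\omega,\mathbf{c}}(U_n)\bigr)\to-2\mu_{\omega,\mathbf{c}}$, which is nonzero because $\mu_{\omega,\mathbf{c}}>0$ by Proposition~\ref{mu_prop}(i); together with the trilinear bound $|N(U)|\lesssim\|U\|_{\mathcal{H}^1}^3$ this yields $\liminf_n\|U_n\|_{\mathcal{H}^1}>0$.

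First I would exclude vanishing. Writing $U_n=(u_{1,n},u_{2,n},u_{3,n})$, the refined estimate $|N(U_n)|\lesssim\|\nabla\cdot u_{3,n}\|_{L^2}\|u_{1,n}\|_{L^4}\|u_{2,n}\|_{L^4}$ from the proof of Proposition~\ref{mu_prop}(i) shows that if $\sup_{y\in\R^d}\int_{B(y,R)}|U_n|^2\,dx\to0$ for every $R>0$, then Lions' vanishing lemma (applicable since $\mathcal{H}^1\hookrightarrow L^4(\R^d)$ for $d\le3$) gives $\|u_{1,n}\|_{L^4}\|u_{2,n}\|_{L^4}\to0$, whence $N(U_n)\to0$, contradicting $N(U_n)\to-2\mu_{\omega,\mathbf{c}}\neq0$. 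Hence there exist $\{y_n\}\subset\R^d$ and constants $R,\delta>0$ with $\liminf_n\int_{B(y_n,R)}|U_n|^2\,dx\ge\delta$.

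Set $\widetilde U_n:=U_n(\cdot-y_n)$. Since $S_{\omega,\mathbf{c}}$, $K_{\omega,\mathbf{c}}$, $L_{\omega,\mathbf{c}}$ and $N$ are translation invariant, $\{\widetilde U_n\}$ is again a minimizing sequence with $L_{\omega,\mathbf{c}}(\widetilde U_n)\to6\mu_{\omega,\mathbf{c}}$, so by boundedness I may pass to a subsequence with $\widetilde U_n\rightharpoonup V$ weakly in $\mathcal{H}^1$; the non-vanishing bound and the Rellich--Kondrachov theorem give $V\neq(\mathbf{0},\mathbf{0},\mathbf{0})$. The crux is the inequality $K_{\omega,\mathbf{c}}(V)\le0$. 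As a coercive quadratic form, $L_{\omega,\mathbf{c}}$ is weakly lower semicontinuous, so $L_{\omega,\mathbf{c}}(V)\le\liminf_n L_{\omega,\mathbf{c}}(\widetilde U_n)=6\mu_{\omega,\mathbf{c}}$; and I would prove $N(\widetilde U_n)\to N(V)$ by splitting the trilinear integral over a large ball and its complement, using local strong $L^4$-convergence of $u_{1,n},u_{2,n}$ on the ball, the uniform $\mathcal{H}^1$-bound to control the tail, and weak $L^2$-convergence of $\nabla\cdot u_{3,n}$ for the derivative factor. Combining these yields $K_{\omega,\mathbf{c}}(V)=L_{\omega,\mathbf{c}}(V)+3N(V)\le\liminf_n K_{\omega,\mathbf{c}}(\widetilde U_n)=0$.

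To finish, since $V\neq(\mathbf{0},\mathbf{0},\mathbf{0})$ and $K_{\omega,\mathbf{c}}(V)\le0$, Proposition~\ref{mu_prop}(ii) gives $L_{\omega,\mathbf{c}}(V)\ge6\mu_{\omega,\mathbf{c}}$, while the semicontinuity above gives the reverse inequality; hence $L_{\omega,\mathbf{c}}(V)=6\mu_{\omega,\mathbf{c}}$ and $K_{\omega,\mathbf{c}}(V)=0$, so $S_{\omega,\mathbf{c}}(V)=\tfrac16 L_{\omega,\mathbf{c}}(V)=\mu_{\omega,\mathbf{c}}$ by (\ref{SKL_eq}) and $V\in\mathcal{M}_{\omega,\mathbf{c}}$, which is therefore nonempty. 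Moreover $L_{\omega,\mathbf{c}}(\widetilde U_n)\to L_{\omega,\mathbf{c}}(V)$ means convergence of the norm induced by the coercive form $L_{\omega,\mathbf{c}}$, which is equivalent to $\|\cdot\|_{\mathcal{H}^1}$ by Proposition~\ref{L_bdd}; together with $\widetilde U_n\rightharpoonup V$ in the Hilbert space $\mathcal{H}^1$ this upgrades to strong convergence $\widetilde U_n\to V$. For the last assertion, strong convergence and continuity of $G$ give $G(V)=\lim_n G(\widetilde U_n)\ge\eta$ along the subsequence realizing $\limsup_n G(U_n)\ge\eta$, so $V\in\mathcal{M}_{\omega,\mathbf{c}}^*(\eta)$. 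The main obstacle is precisely the convergence $N(\widetilde U_n)\to N(V)$: because $N$ carries the derivative $\nabla\cdot u_3$, that factor converges only weakly in $L^2$ and no compact Sobolev embedding applies to it, so the passage to the limit must be organized carefully as above rather than by a direct compactness argument.
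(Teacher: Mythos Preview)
Your argument has a genuine gap at the step $N(\widetilde U_n)\to N(V)$, on which your proof of $K_{\omega,\mathbf c}(V)\le 0$ and everything downstream rests. The sketch you give---split the trilinear integral over $B_R$ and its complement, use Rellich on the ball, and invoke ``the uniform $\mathcal H^1$-bound to control the tail''---does not close: a uniform bound on $\|\widetilde U_n\|_{\mathcal H^1}$ only tells you that $\int_{B_R^c}(\nabla\cdot\widetilde u_{3,n})\,\widetilde u_{1,n}\cdot\overline{\widetilde u_{2,n}}\,dx$ is \emph{bounded}, not that it is small uniformly in $n$. Nothing in your setup excludes dichotomy, i.e.\ a nontrivial piece of $\widetilde U_n$ escaping to infinity; in that scenario $N(\widetilde U_n)$ retains a contribution from the escaping piece and need not converge to $N(V)$. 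Pairing a weakly convergent factor $\nabla\cdot\widetilde u_{3,n}$ against a sequence $\widetilde u_{1,n}\cdot\overline{\widetilde u_{2,n}}$ that is only \emph{locally} strongly convergent in $L^2$ simply does not produce convergence of the integral over $\R^d$.

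The paper does not try to prove $N(V_n)\to N(V)$. Instead it writes $2N(V)=\|\nabla\cdot v_3-v_1\cdot\overline{v_2}\|_{L^2}^2-\|\nabla\cdot v_3\|_{L^2}^2-\|v_1\cdot\overline{v_2}\|_{L^2}^2$ and applies the Brezis--Lieb lemma termwise to obtain the splittings $L_{\omega,\mathbf c}(V_n)-L_{\omega,\mathbf c}(V_n-V)-L_{\omega,\mathbf c}(V)\to 0$ and $K_{\omega,\mathbf c}(V_n)-K_{\omega,\mathbf c}(V_n-V)-K_{\omega,\mathbf c}(V)\to 0$. The inequality $K_{\omega,\mathbf c}(V)\le 0$ is then obtained by contradiction: if $K_{\omega,\mathbf c}(V)>0$, the splitting forces $K_{\omega,\mathbf c}(V_n-V)<0$ for large $n$, hence $L_{\omega,\mathbf c}(V_n-V)>6\mu_{\omega,\mathbf c}$ by Proposition~\ref{mu_prop}(ii), and the $L_{\omega,\mathbf c}$-splitting then yields $L_{\omega,\mathbf c}(V)\le 0$, contradicting coercivity. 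This Brezis--Lieb/contradiction device is exactly what substitutes for the dichotomy analysis you are missing; once $K_{\omega,\mathbf c}(V)\le 0$ is obtained this way, the remainder of your argument (the sandwich $L_{\omega,\mathbf c}(V)=6\mu_{\omega,\mathbf c}$, strong convergence from norm convergence plus weak convergence, and the $G$-conclusion) is fine.
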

We remark that the latter statement in this proposition will be applied to prove Theorem \ref{stability} in Subsection \ref{proof of stability}.

%
%\begin{remark}
%Proposition~\ref{GS_exist} says that 
%$\mathcal{M}_{\omega,\mathbf{c}}$ is not empty. 
%Namely, there exists a minimizer for $\mu_{\omega,\mathbf{c}}$. 
%\end{remark}
%
To prove Proposition~\ref{GS_exist}, 
we use the following Lieb's compactness theorem.
\begin{lemma}[Lieb's compactness theorem \cite{Lieb83}]\label{Lieb}
Let $\{F_n\}$ be a bounded sequence 
in $\mathcal{H}^1$. 
Assume that there exist $p\in (0,2^*)$ such that 
$\limsup_{n\rightarrow \infty}\|F_n\|_{L^p}>0$. 
Then there exists $\{y_n\}\subset \R^d$ and 
$F\in \mathcal{H}^1\backslash \{(\mathbf{0},\mathbf{0},\mathbf{0})\}$ 
such that $\{F_n(\cdot -y_n)\}$ 
has a subsequence that converges to F weakly in 
$\mathcal{H}^1$, where 
$p^*=\infty$ if $d=1$ or $2$, $p^*=\frac{2d}{d-2}$ if $d\ge 3$. 
\end{lemma}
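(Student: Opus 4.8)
The plan is to deduce weak compactness up to translations from the classical Lions ``vanishing'' principle: a bounded sequence in $\mathcal{H}^1$ whose $L^p$-norms stay away from $0$ must concentrate a fixed amount of $L^2$-mass in some ball of fixed radius, and following this concentration by translation produces a nonzero weak limit. I would carry out the argument in the effective range $2<p<2^*$ (with the convention $2^*=\infty$ when $d\le 2$), which is where the hypothesis bites: for $p\le 2$ the mass can spread to infinity while $\|F_n\|_{L^p}$ remains bounded below, so only $p>2$ is useful for producing a nonzero limit.

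First I would pass to a subsequence, not relabeled, along which $\|F_n\|_{L^p}\ge\varepsilon$ for a fixed $\varepsilon>0$; this is possible because $\limsup_n\|F_n\|_{L^p}>0$. The heart of the argument is the \emph{vanishing estimate}. Fixing $R>0$ and covering $\R^d$ by balls $\{B(z_i,R)\}_i$ of bounded overlap, a Gagliardo--Nirenberg--Sobolev interpolation on each ball (interpolating $L^p$ between $L^2$ and $L^{2^*}$, and using $H^1\hookrightarrow L^{2^*}$),
\[
\|f\|_{L^p(B(z_i,R))}\lesssim \|f\|_{L^2(B(z_i,R))}^{1-\theta}\,\|f\|_{H^1(B(z_i,R))}^{\theta},
\]
followed by summation over $i$ using the bounded overlap and the $\mathcal{H}^1$-bound, yields a global inequality of the form
\[
\|F_n\|_{L^p}^{p}\lesssim \|F_n\|_{\mathcal{H}^1}^{a}\Bigl(\sup_{y\in\R^d}\int_{B(y,R)}|F_n|^2\,dx\Bigr)^{b}
\]
with exponents $a,b>0$ depending only on $d$ and $p$. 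Since $\|F_n\|_{\mathcal{H}^1}$ is bounded and $\|F_n\|_{L^p}\ge\varepsilon$, this forces $\sup_y\int_{B(y,R)}|F_n|^2\,dx\ge\delta$ for some $\delta>0$ and all $n$, so I may choose $y_n\in\R^d$ with $\int_{B(y_n,R)}|F_n|^2\,dx\ge\delta/2$.

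Next I would set $G_n:=F_n(\cdot-y_n)$ (after possibly replacing $y_n$ by $-y_n$ to match the statement), so that $\{G_n\}$ is still bounded in $\mathcal{H}^1$ and $\int_{B(0,R)}|G_n|^2\,dx\ge\delta/2$. Extract a subsequence with $G_n\rightharpoonup F$ weakly in $\mathcal{H}^1$. By the Rellich--Kondrachov theorem (valid for vector-valued $H^1$ on the bounded ball $B(0,R)$), the embedding $H^1(B(0,R))\hookrightarrow L^2(B(0,R))$ is compact, so $G_n\to F$ strongly in $L^2(B(0,R))$ along a further subsequence. Passing to the limit in the lower bound gives $\int_{B(0,R)}|F|^2\,dx\ge\delta/2>0$, whence $F\neq(\mathbf{0},\mathbf{0},\mathbf{0})$, which is exactly the desired conclusion.

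The main obstacle is the vanishing estimate: choosing the interpolation exponent $\theta$ and organizing the summation over the covering so that the total $H^1$-mass enters with a summable power while the local $L^2$-mass is extracted with a strictly positive power $b$. This is precisely where the restriction $2<p<2^*$ and the Sobolev embedding $H^1\hookrightarrow L^{2^*}$ are used; the remaining steps---subsequence extraction, translation, and transferring the lower bound to the weak limit via Rellich compactness---are routine.
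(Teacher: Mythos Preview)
The paper does not supply its own proof of this lemma; it is quoted from Lieb \cite{Lieb83} and used as a black box (with $p=4$) in Step~3 of the proof of Proposition~\ref{GS_exist}. There is therefore nothing in the paper to compare your argument against.

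Your sketch via the Lions vanishing lemma is a standard and correct route to the conclusion in the range $2<p<2^*$. You are also right to flag that the range $p\in(0,2^*)$ printed in the statement cannot be what is meant: the spreading sequence $F_n(x)=n^{-d/2}\phi(x/n)$ is bounded in $\mathcal{H}^1$, has constant $L^2$-norm (and blowing-up $L^p$-norm for $p<2$), yet $F_n(\cdot-y_n)\rightharpoonup 0$ in $\mathcal{H}^1$ for every choice of $\{y_n\}$. So the hypothesis is only useful for $p>2$, which is all that the paper needs. The remaining steps of your argument---the local Gagliardo--Nirenberg/Sobolev interpolation summed over a bounded-overlap cover, the extraction of concentration points $y_n$, and the use of Rellich compactness on $B(0,R)$ to show the weak limit is nonzero---are routine and correctly outlined.
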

\begin{lemma}[Special case of Brezis-Lieb's lemma \cite{BL83}]\label{Brezis_Lieb}
Assume that $\{f_n\}$ be a bounded sequence in $L^2(\R^d)$ 
and $f_n\rightharpoonup f$ weakly in $L^2(\R^d)$. 
Then it holds that
\[
\|f_n\|_{L^2}^2-\|f_n-f\|_{L^2}^2-\|f\|_{L^2}^2\ \rightarrow\ 0.
\]
\end{lemma}
%
%For the proof of Lemma~\ref{Lieb}, see \cite{Lieb83}. 
%For the proof of Lemma~\ref{Brezis_Lieb}, see \cite{BL83}.
%
\begin{remark}
Lemma~\ref{Brezis_Lieb} also holds 
if we replace $L^2(\R^d)$ by $(L^2(\R^d))^d$. 
\end{remark}
\begin{proof}[Proof of Proposition~\ref{GS_exist}]
We divide the proof into 6 steps as follows.\\
{\bf Step\ 1}.\ We prove the boundedness 
of $\{U_n\}$ in $\mathcal{H}^1$. 
By (\ref{SKL_eq}), we have
\begin{equation}\label{L_limit}
L_{\omega,\mathbf{c}}(U_n)=6S_{\omega,\mathbf{c}}(U_n)-2K_{\omega,\mathbf{c}}(U_n)
\ \rightarrow\ 6\mu_{\omega,\mathbf{c}}. 
\end{equation}
Thus $\{L_{\omega,\mathbf{c}}(U_n)\}$ is a bounded sequence. 
This and Proposition~\ref{L_bdd} imply that $\{U_n\}$ is bounded in $\mathcal{H}^1$. \\

\noindent {\bf Step\ 2}. We prove $\limsup_{n\rightarrow \infty}\|U_n\|_{L^4}>0$. 
We write $U_n=(u_{1n},u_{2n},u_{3n})$\ ($u_{jn}\in (H^1(\R^d))^d$).
By Step\ 1, there exists $C>0$ such that $\|U_n\|_{\mathcal{H}^1}\le C$ holds for any $n\in \N$. 
Therefore, by the H\"older inequality, we have
\[
N(U_n)=\left|{\rm Re}(u_{3n},\nabla (u_{1n}\cdot \overline{u_{2n}})_{L^2}\right|
\le \|\nabla \cdot u_{3n}\|_{L^2}\|u_{1n}\|_{L^4}\|u_{2n}\|_{L^4}
\le C\|U_n\|_{L^4}^2. 
\]
Assume $\lim_{n\rightarrow \infty}\|U_n\|_{L^4}=0$. Then we obtain $\lim_{n\rightarrow \infty}N(U_n)=0$. 
This implies that
\[
L_{\omega,\mathbf{c}}(U_n)=K_{\omega,\mathbf{c}}(U_n)-3N(U_n)\ \rightarrow\ 0. 
\]
This contradicts to (\ref{L_limit}) 
because $\mu_{\omega,\mathbf{c}}>0$ by Proposition~\ref{mu_prop}\ (i). 
Therefore we get $\lim_{n\rightarrow \infty}\|U_n\|_{L^4}\ne 0$. 
Namely, 
\[
\limsup_{n\rightarrow \infty}\|U_n\|_{L^4}>0. 
\]

\noindent {\bf Step\ 3}. In this step, we apply Lemma~\ref{Lieb}. 
By Step\ 1 and Step\ 2, $\{U_n\}$ satisfies 
the assumptions in Lemma~\ref{Lieb} for $p=4$. 
Therefore, there exist $\{y_n\}\subset \R^d$ 
and $V\in \mathcal{H}^1\backslash \{(\mathbf{0},\mathbf{0},\mathbf{0})\}$ such that 
$\{U_n(\cdot -y_n)\}$ has a subsequence $\{V_{n}\}$ that converges to $V$ 
weakly in $\mathcal{H}^1$. 
We set
\[
\begin{split}
&V_n=(v_{1n},v_{2n},v_{3n}):=U_n(\cdot -y_n)\ \ (v_{jn}\in (H^1(\R^d))^d),\\
&V=(v_{1},v_{2},v_{3})\ \ (v_{j}\in (H^1(\R^d))^d). 
\end{split}
\]
We can assume $V_n\ne V$ for any $n\in \N$ without loss of generality. \\

\noindent {\bf Step\ 4}. We prove
\begin{equation}\label{LV_limit}
L_{\omega,\mathbf{c}}(V_n)-L_{\omega,\mathbf{c}}(V_n-V)-L_{\omega,\mathbf{c}}(V)\ \rightarrow\ 0
\end{equation}
and
\begin{equation}\label{KV_limit}
K_{\omega,\mathbf{c}}(V_n)-K_{\omega,\mathbf{c}}(V_n-V)-K_{\omega,\mathbf{c}}(V)\ \rightarrow\ 0. 
\end{equation}
First, we show (\ref{LV_limit}). 
As in the proof of Proposition~\ref{L_bdd}, we can write
\begin{equation}\label{L_fomula}
\begin{split}
L_{\omega,\mathbf{c}}(V)
&=\sum_{j=1}^3\left(C_{1,j}\|\nabla v_j\|_{L^2}^2+C_{2,j}\|v_j\|_{L^2}^2+
\sum_{k=1}^dC_{3,j,k}\|\partial_kv_j+c_{j,k}v_j\|_{L^2}^2\right)\\
&=\sum_{j=1}^3\left(C_{1,j}\sum_{k=1}^d\|\partial_k v_j\|_{L^2}^2+C_{2,j}\|v_j\|_{L^2}^2+
\sum_{k=1}^dC_{3,j,k}\|\partial_kv_j+c_{j,k}v_j\|_{L^2}^2\right)
\end{split}
\end{equation}
for some positive constants $C_{1,j}$, $C_{2,j}$, $C_{3,j,k}$ and 
constants $c_{j,k}\in \C$. 
Because $V_n\rightharpoonup V$ weakly in $\mathcal{H}^1$, 
the sequences
$\{v_{jn}\}$ and $\{\partial_kv_{jn}\}$ are bounded in $(L^2(\R^d))^d$,  
and it holds that
\begin{equation}\label{v_weak_l2}
\partial_k v_{jn}\rightharpoonup \partial_k v_j,\ \ \ \ 
v_{jn}\rightarrow v_j\ \ \ \ {\rm weakly}\ {\rm in}\ L^2(\R^d)^d
\end{equation}
by taking subsequences. 
Therefore, by applying Lemma~\ref{Brezis_Lieb}, 
we get (\ref{LV_limit}). 

Next, we show (\ref{KV_limit}). 
We can write
\begin{equation}\label{N_rep_norm}
2N(V)=2{\rm Re}(v_3,\nabla (v_1\cdot v_2))_{L^2}
=\|\nabla \cdot v_3-v_1\cdot \overline{v_2}\|_{L^2}^2
-\|\nabla \cdot v_3\|_{L^2}^2-\|v_1\cdot \overline{v_2}\|_{L^2}^2. 
\end{equation}
Because $V_n\rightarrow V$ weakly in $\mathcal{H}^1$, 
the sequences $\{v_{1n}\}$, $\{v_{2n}\}$, and $\{v_{3n}\}$ are
bounded in $(H^1(\R^d))^d$. 
This implies that the sequences $\{\nabla \cdot v_{3n}\}$ and $\{v_{1n}\cdot \overline{v_{2n}}\}$ 
are bounded in $L^2(\R^d)$ because
\[
\|\nabla \cdot v_{3n}\|_{L^2}\le \|v_{3n}\|_{H^1}
\]
and 
\[
\|v_{1n}\cdot \overline{v_{2n}}\|_{L^2}
\le \|v_{1n}\|_{L^4}\|v_{2n}\|_{L^4}
\le \|v_{1n}\|_{H^1}\|v_{2n}\|_{H^1} 
\]
hold by the H\"older inequality and the Sobolev inequality. 
Therefore, by taking subsequences, we obtain 
\[
\nabla \cdot v_{3n}\rightharpoonup \nabla \cdot v_{3},\ \ \ \ 
v_{1n}\cdot \overline{v_{2n}}\rightharpoonup v_1\cdot \overline{v_2}\ \ \ \ {\rm weakly}\ {\rm in}\ L^2(\R^d). 
\]
Furthermore, by (\ref{N_rep_norm}) and applying Lemma~\ref{Brezis_Lieb}, 
we have
\begin{equation}\label{NV_limit}
N(V_n)-N(V_n-V)-N(V)\ \rightarrow\ 0. 
\end{equation}
Because $K_{\omega,\mathbf{c}}(U)=L_{\omega,\mathbf{c}}(U)+3N(U)$, we get (\ref{KV_limit}) by (\ref{LV_limit}) and (\ref{NV_limit}). \\

\noindent {\bf Step\ 5}. In this step, we prove $K_{\omega,\mathbf{c}}(V)\le 0$ by contradiction. 
We assume $K_{\omega,\mathbf{c}}(V)>0$. (Then, $V\ne 0$ by the definition of $K_{\omega,\mathbf{c}}$.)
Because
\[
K_{\omega,\mathbf{c}}(V_n)=K_{\omega,\mathbf{c}}(U_n)\rightarrow 0, 
\]
By (\ref{KV_limit}), we have
\[
\begin{split}
K_{\omega,\mathbf{c}}(V_n-V)
&=-K_{\omega,\mathbf{c}}(V)+K_{\omega,\mathbf{c}}(V_n)
-(K_{\omega,\mathbf{c}}(V_n)-K_{\omega,\mathbf{c}}(V_n-V)-K_{\omega,\mathbf{c}}(V))\\
&\rightarrow -K_{\omega,\mathbf{c}}(V)<0.
\end{split}
\]
Therefore, if $n\in \N$ is large enough, then it holds $K_{\omega,\mathbf{c}}(V_n-V)<0$. 
Since $V_n-V\ne 0$, 
this and Proposition~\ref{mu_prop}\ (ii) imply that 
$L_{\omega,\mathbf{c}}(V_n-V)>6\mu_{\omega,\mathbf{c}}$ for sufficiently large $n\in \N$. 
Because by (\ref{L_limit}),
\begin{equation}\label{LV_mu}
L_{\omega,\mathbf{c}}(V_n)=L_{\omega,\mathbf{c}}(U_n)\rightarrow 6\mu_{\omega,\mathbf{c}}
\end{equation}
holds and we obtain 
\[
\begin{split}
L_{\omega,\mathbf{c}}(V)
&=L_{\omega,\mathbf{c}}(V_n)-L_{\omega,\mathbf{c}}(V_n-V)
-(L_{\omega,\mathbf{c}}(V_n)-L_{\omega,\mathbf{c}}(V_n-V)-L_{\omega,\mathbf{c}}(V))\\
&<L_{\omega,\mathbf{c}}(V_n)-6\mu_{\omega,\mathbf{c}}
-(L_{\omega,\mathbf{c}}(V_n)-L_{\omega,\mathbf{c}}(V_n-V)-L_{\omega,\mathbf{c}}(V))\\
&\rightarrow 0
\end{split}
\]
by (\ref{LV_limit}). 
But this leads to a contradiction because by Proposition~\ref{L_bdd} and $V\ne 0$, 
\[
L_{\omega,\mathbf{c}}(V)\gtrsim \|V\|_{\mathcal{H}^1}^2>0
\]
holds.\\

\noindent {\bf Step\ 6}. Finally. we prove 
$V_n\rightarrow V$ strongly in $\mathcal{H}^1$ and $V\in \mathcal{M}_{\omega,\mathbf{c}}$. 
By Step\ 5 and Proposition~\ref{mu_prop}\ (ii), 
we have $L_{\omega, \mathbf{c}}(V)\ge 6\mu_{\omega,\mathbf{c}}$. 
On the other hand, by (\ref{L_fomula}), we obtain
\[
\begin{split}
L_{\omega,\mathbf{c}}(V)
&\le \sum_{j=1}^3\left(C_{1,j}\sum_{k=1}^d
\liminf_{n\rightarrow \infty}\|\partial_k v_{jn}\|_{L^2}^2+C_{2,j}
\liminf_{n\rightarrow \infty}\|v_{jn}\|_{L^2}^2+
\sum_{k=1}^dC_{3,j,k}
\liminf_{n\rightarrow \infty}\|\partial_kv_{jn}+c_{j,k}v_{jn}\|_{L^2}^2\right)\\
&\le \lim_{n\rightarrow \infty}
\sum_{j=1}^3\left(C_{1,j}\sum_{k=1}^d
\|\partial_k v_{jn}\|_{L^2}^2+C_{2,j}
\|v_{jn}\|_{L^2}^2+
\sum_{k=1}^dC_{3,j,k}
\|\partial_kv_{jn}+c_{j,k}v_{jn}\|_{L^2}^2\right)\\
&= \lim_{n\rightarrow \infty}L_{\omega,\mathbf{c}}(V_n),
\end{split}
\]
where the first inequality is 
obtained by (\ref{v_weak_l2}). 
Therefore, we get $L_{\omega,\mathbf{c}}(V)\le 6\mu_{\omega,\mathbf{c}}$ by (\ref{LV_mu}). 
As a result, we have $L_{\omega,\mathbf{c}}(V)= 6\mu_{\omega,\mathbf{c}}$. 
Furthermore, by Proposition~\ref{L_bdd}, (\ref{LV_limit}), and (\ref{LV_mu}), we obtain
\[
\begin{split}
\|V_n-V\|_{\mathcal{H}^1}^2
&\lesssim L_{\omega,\mathbf{c}}(V_n-V)\\
&=L_{\omega,\mathbf{c}}(V_n)-L_{\omega,\mathbf{c}}(V)
-(L_{\omega,\mathbf{c}}(V_n)-L_{\omega,\mathbf{c}}(V_n-V)-L_{\omega,\mathbf{c}}(V))\\
&\rightarrow 0.
\end{split}
\]
Namely $V_n\rightarrow V$ strongly 
in $\mathcal{H}^1$. 
Because $L_{\omega,\mathbf{c}}(V)= 6\mu_{\omega,\mathbf{c}}$ 
(In particular $L_{\omega,\mathbf{c}}(V)\le 6\mu_{\omega,\mathbf{c}}$), 
we have $K_{\omega,\mathbf{c}}(V)\ge 0$ by Proposition~\ref{mu_prop}\ (ii). 
This and Step 5 imply $K_{\omega,\mathbf{c}}(V)=0$. 
Thus, we obtain
\[
S_{\omega,\mathbf{c}}(V)
=\frac{1}{3}K_{\omega,\mathbf{c}}(V)+\frac{1}{6}L_{\omega,\mathbf{c}}(V)=\mu_{\omega,\mathbf{c}}
\]
by (\ref{SKL_eq}) and it implies $V\in \mathcal{M}_{\omega,\mathbf{c}}$. 

Now, we assume that the additional condition (\ref{ex_gs_add_cond}) holds. 
Namely, we assume
\[
\limsup_{n\rightarrow \infty}G(U_n)=
\limsup_{n\rightarrow \infty}\left\{(4-2d)\omega Q(U_n)+(3-d)\mathbf{c}\cdot \mathbf{P}(U_n)\right\}\ge \eta.
\]
By the definitions of $Q$ and $\mathbf{P}$,
\[
\begin{split}
|G(V_n)-G(V)|
&\lesssim \|V_n-V\|_{L^2}^2
+\|V_n-V\|_{L^2}\|\nabla (V_n-V)\|_{L^2}
+\|V\|_{L^2}\|\nabla (V_n-V)\|_{L^2}
+\|V_n-V\|_{L^2}\|\nabla V\|_{L^2}\\
&\lesssim \|V_n-V\|_{\mathcal{H}^1}^2+\|V_n-V\|_{\mathcal{H}^1}
\ \rightarrow\ 0
\end{split}
\]
holds. We also have
\[
G(V)=G(V_n)-(G(V_n)-G(V))=G(U_n)-(G(V_n)-G(V)). 
\]
These imply $G(V)=\limsup_{n\rightarrow \infty}G(U_n)\ge \eta$ and we get $V\in \mathcal{M}_{\omega,\mathbf{c}}^*(\eta)$.
\end{proof}
\begin{proposition}\label{MGeq}
Let $d\in \{1,2,3\}$ and $\alpha,\beta,\gamma>0$. 
Assume that $(\omega,\mathbf{c})\in \R\times \R^d$ 
satisfies $\omega >\frac{\sigma |\mathbf{c}|^2}{4}$.
Then, we have $\mathcal{M}_{\omega,\mathbf{c}}=\mathcal{G}_{\omega,\mathbf{c}}$. 
\end{proposition}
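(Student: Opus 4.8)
The plan is to prove the two inclusions $\mathcal{M}_{\omega,\mathbf{c}}\subseteq\mathcal{G}_{\omega,\mathbf{c}}$ and $\mathcal{G}_{\omega,\mathbf{c}}\subseteq\mathcal{M}_{\omega,\mathbf{c}}$ separately. Here I recall that $\mathcal{G}_{\omega,\mathbf{c}}$ is the set of ground states, i.e. the nontrivial weak solutions of (\ref{ellip}) of least action among all such solutions, while $\mathcal{M}_{\omega,\mathbf{c}}$ is the set of minimizers of $S_{\omega,\mathbf{c}}$ on the Nehari manifold $\{U\in\mathcal{H}^1\backslash\{(\mathbf{0},\mathbf{0},\mathbf{0})\}\mid K_{\omega,\mathbf{c}}(U)=0\}$, on which $S_{\omega,\mathbf{c}}\equiv\mu_{\omega,\mathbf{c}}$ by (\ref{SKL_eq}). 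Two facts drive the argument: (a) every nontrivial weak solution lies on the Nehari manifold, so its action is at least $\mu_{\omega,\mathbf{c}}$; (b) every Nehari minimizer is in fact a weak solution of (\ref{ellip}). Granting these, and using that $\mathcal{M}_{\omega,\mathbf{c}}\ne\emptyset$ by Proposition~\ref{GS_exist}, the least action over all nontrivial solutions is exactly $\mu_{\omega,\mathbf{c}}$, after which both inclusions follow by comparing action values.

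For fact (a) I would use the standard identity $K_{\omega,\mathbf{c}}(U)=\langle S_{\omega,\mathbf{c}}'(U),U\rangle$. Indeed, differentiating the dilation $\lambda\mapsto S_{\omega,\mathbf{c}}(\lambda U)$ and using the homogeneities $L_{\omega,\mathbf{c}}(\lambda U)=\lambda^2L_{\omega,\mathbf{c}}(U)$, $N(\lambda U)=\lambda^3N(U)$ together with (\ref{SKL_eq}) gives $\frac{d}{d\lambda}S_{\omega,\mathbf{c}}(\lambda U)\big|_{\lambda=1}=L_{\omega,\mathbf{c}}(U)+3N(U)=K_{\omega,\mathbf{c}}(U)$. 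Hence if $\Phi$ is a nontrivial weak solution, then $S_{\omega,\mathbf{c}}'(\Phi)=0$ forces $K_{\omega,\mathbf{c}}(\Phi)=0$, so $S_{\omega,\mathbf{c}}(\Phi)\ge\mu_{\omega,\mathbf{c}}$ by the definition of $\mu_{\omega,\mathbf{c}}$.

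The crux is fact (b), which I expect to be the main obstacle. Let $V\in\mathcal{M}_{\omega,\mathbf{c}}$. Since $V$ minimizes $S_{\omega,\mathbf{c}}$ under the single constraint $K_{\omega,\mathbf{c}}=0$, I would apply the Lagrange multiplier rule to get $S_{\omega,\mathbf{c}}'(V)=\eta\,K_{\omega,\mathbf{c}}'(V)$ for some $\eta\in\R$; this requires that $V$ be a regular point of the constraint, which I verify by computing
\[
\langle K_{\omega,\mathbf{c}}'(V),V\rangle=\frac{d}{d\lambda}K_{\omega,\mathbf{c}}(\lambda V)\Big|_{\lambda=1}=2L_{\omega,\mathbf{c}}(V)+9N(V)=-L_{\omega,\mathbf{c}}(V),
\]
where the last equality uses $N(V)=-\tfrac13L_{\omega,\mathbf{c}}(V)$, equivalent to $K_{\omega,\mathbf{c}}(V)=0$. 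By the coercivity in Proposition~\ref{L_bdd} and $V\ne(\mathbf{0},\mathbf{0},\mathbf{0})$ we have $L_{\omega,\mathbf{c}}(V)>0$, so $\langle K_{\omega,\mathbf{c}}'(V),V\rangle<0$; in particular $K_{\omega,\mathbf{c}}'(V)\ne0$ and the multiplier rule is legitimate. Pairing the Euler--Lagrange relation with $V$ then yields $0=K_{\omega,\mathbf{c}}(V)=\langle S_{\omega,\mathbf{c}}'(V),V\rangle=\eta\langle K_{\omega,\mathbf{c}}'(V),V\rangle=-\eta\,L_{\omega,\mathbf{c}}(V)$, forcing $\eta=0$ and hence $S_{\omega,\mathbf{c}}'(V)=0$. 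This nondegeneracy of the constraint, supplied by Proposition~\ref{L_bdd}, is exactly what makes the multiplier vanish and is the delicate point of the argument.

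Finally I would assemble the inclusions. Any $V\in\mathcal{M}_{\omega,\mathbf{c}}$ is by (b) a nontrivial solution with $S_{\omega,\mathbf{c}}(V)=\mu_{\omega,\mathbf{c}}$, while by (a) every nontrivial solution has action $\ge\mu_{\omega,\mathbf{c}}$; thus $V$ is a least-action solution and $V\in\mathcal{G}_{\omega,\mathbf{c}}$, proving $\mathcal{M}_{\omega,\mathbf{c}}\subseteq\mathcal{G}_{\omega,\mathbf{c}}$. For the reverse inclusion, take $\Phi\in\mathcal{G}_{\omega,\mathbf{c}}$. By (a) it satisfies $K_{\omega,\mathbf{c}}(\Phi)=0$ and is nontrivial, so $S_{\omega,\mathbf{c}}(\Phi)\ge\mu_{\omega,\mathbf{c}}$; since $\mathcal{M}_{\omega,\mathbf{c}}\subseteq\mathcal{G}_{\omega,\mathbf{c}}$ is nonempty there exists a ground state of action $\mu_{\omega,\mathbf{c}}$, and as $\Phi$ is itself least-action we get $S_{\omega,\mathbf{c}}(\Phi)\le\mu_{\omega,\mathbf{c}}$, hence $S_{\omega,\mathbf{c}}(\Phi)=\mu_{\omega,\mathbf{c}}$. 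Therefore $\Phi$ is nontrivial with $K_{\omega,\mathbf{c}}(\Phi)=0$ and $S_{\omega,\mathbf{c}}(\Phi)=\mu_{\omega,\mathbf{c}}$, i.e. $\Phi\in\mathcal{M}_{\omega,\mathbf{c}}$. Combining the two inclusions gives $\mathcal{M}_{\omega,\mathbf{c}}=\mathcal{G}_{\omega,\mathbf{c}}$.
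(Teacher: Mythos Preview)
Your proposal is correct and follows essentially the same approach as the paper: the Lagrange multiplier argument for $\mathcal{M}_{\omega,\mathbf{c}}\subset\mathcal{G}_{\omega,\mathbf{c}}$, with the key computation $\langle K_{\omega,\mathbf{c}}'(V),V\rangle=-L_{\omega,\mathbf{c}}(V)<0$ via Proposition~\ref{L_bdd} to force $\eta=0$, and then the reverse inclusion using Proposition~\ref{GS_exist} together with the fact that every nontrivial solution lies on the Nehari manifold (your fact~(a), which the paper records as Remark~\ref{EK_rem}). The organization differs only cosmetically.
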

\begin{proof}
We first prove $\mathcal{M}_{\omega,\mathbf{c}}\subset \mathcal{G}_{\omega,\mathbf{c}}$. 
Let $\Psi =(\psi_1,\psi_2,\psi_3)\in \mathcal{M}_{\omega,\mathbf{c}}$. 
Since $\Psi$ is a minimizer of $S_{\omega,\mathbf{c}}(\Phi)$ under the condition $K_{\omega,\mathbf{c}}(\Phi)=0$, 
there exists a Lagrange multiplier $\eta \in \R$ such that
\[
D_jS_{\omega,\mathbf{c}}(\Psi)=\eta D_jK_{\omega,\mathbf{c}}(\Psi),\ \ j=1,2,3.
\]
Therefore, we have
\[
0=K_{\omega,\mathbf{c}}(\Psi)
=\partial_{\lambda}S_{\omega,\mathbf{c}}(\lambda \Psi)\big|_{\lambda =1}
=\sum_{j=1}^3\langle D_jS_{\omega,\mathbf{c}}(\Psi),\psi_j\rangle
=\eta \sum_{j=1}^3\langle D_jK_{\omega,\mathbf{c}}(\Psi),\psi_j\rangle
=\eta \partial_{\lambda}K_{\omega,\mathbf{c}}(\lambda \Psi)\big|_{\lambda =1}. 
\]
We also note that the following identities hold:
\[
\begin{split}
\partial_{\lambda}K_{\omega,\mathbf{c}}(\lambda \Psi)\big|_{\lambda =1}
&=\partial_{\lambda}(2L(\lambda\Psi)+3N(\lambda\Psi)+2\omega Q(\lambda\Psi)+2\mathbf{c}\cdot \mathbf{P}(\lambda\Psi))\big|_{\lambda=1}\\
&=4L(\Psi)+9N(\Psi)+4\omega Q(\Psi)+4\mathbf{c}\cdot \mathbf{P}(\Psi)\\
&=3K_{\omega,\mathbf{c}}(\Psi)-L_{\omega,\mathbf{c}}(\Psi)=-L_{\omega,\mathbf{c}}(\Psi). 
\end{split}
\]
Because $\Psi \ne 0$, we obtain $\partial_{\lambda}K_{\omega,\mathbf{c}}(\lambda \Psi)\big|_{\lambda =1}<0$ 
by Proposition~\ref{L_bdd}. 
This implies $\eta =0$ and we get 
\[
D_jS_{\omega,\mathbf{c}}(\Psi)=0,\ \  j=1,2,3.
\]
This means $\Psi \in \mathcal{E}_{\omega,\mathbf{c}}$. 
Furthermore, 
for any $\Theta \in \mathcal{E}_{\omega,\mathbf{c}}$, we have $K_{\omega,\mathbf{c}}(\Theta)=0$ 
by Remark~\ref{EK_rem}. 
Thus, by the definition of $\mu_{\omega,\mathbf{c}}$, 
we obtain
\[
S_{\omega,\mathbf{c}}(\Psi)=\mu_{\omega,\mathbf{c}}\le S_{\omega,\mathbf{c}}(\Theta ). 
\]
Therefore, we get $\Psi \in \mathcal{G}_{\omega,\mathbf{c}}$. 

Next, we prove $\mathcal{G}_{\omega,\mathbf{c}}\subset \mathcal{M}_{\omega,\mathbf{c}}$. 
Let $\Psi \in \mathcal{G}_{\omega,\mathbf{c}}$. 
By Proposotion~\ref{GS_exist}, 
there exists $\Phi \in \mathcal{M}_{\omega,\mathbf{c}}$. 
Because $\mathcal{M}_{\omega,\mathbf{c}}\subset \mathcal{G}_{\omega,\mathbf{c}}\subset \mathcal{E}_{\omega,\mathbf{c}}$, 
we have $\Phi \in \mathcal{E}_{\omega,\mathbf{c}}$. 
This implies that 
\[
S_{\omega,\mathbf{c}}(\Psi)\le S_{\omega,\mathbf{c}}(\Phi)=\mu_{\omega,\mathbf{c}}. 
\]
On the other hand, by Remark~\ref{EK_rem}, we have $K_{\omega,\mathbf{c}}(\Psi)=0$ 
since $\Psi \in \mathcal{G}_{\omega,\mathbf{c}}\subset \mathcal{E}_{\omega,\mathbf{c}}$. 
Therefore, by the definition of $\mu_{\omega,\mathbf{c}}$, we get
\[
\mu_{\omega,\mathbf{c}}\le S_{\omega,\mathbf{c}}(\Psi). 
\]
As a result, we obtain $S_{\omega,\mathbf{c}}(\Psi)=\mu_{\omega,\mathbf{c}}$ and 
this implies $\Psi \in \mathcal{M}_{\omega,\mathbf{c}}$. 
\end{proof}

\begin{proof}[Proof of Theorem \ref{GS_exist_th}]
Theorem \ref{GS_exist_th} can be proved by combining Proposition \ref{GS_exist} and Proposition \ref{MGeq}.
\end{proof}

Next we prepare fundamental identities about the ground-state energy level $\mu_{\omega,\mathbf{c}}$, which will be applied to prove the stability results in Section~\ref{GWP_Sta}.  
\begin{proposition}\label{mu_scale}
Let $d\in \{1,2,3\}$ and $\alpha,\beta,\gamma>0$. 
Assume that $(\omega,\mathbf{c})\in \R\times \R^d$ 
satisfies $\omega >\frac{\sigma |\mathbf{c}|^2}{4}$.
Then, we have 
\[
\mu_{\omega,\mathbf{c}}=\omega^{2-\frac{d}{2}}\mu_{1,\frac{\mathbf{c}}{\sqrt{\omega}}}.
\]
\end{proposition}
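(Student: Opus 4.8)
The plan is to construct an explicit rescaling of functions that identifies the constrained minimization defining $\mu_{\omega,\mathbf{c}}$ with the one defining $\mu_{1,\mathbf{c}/\sqrt{\omega}}$, and then to read off the factor $\omega^{2-\frac{d}{2}}$ from how the action scales. Recall that $\mu_{\omega,\mathbf{c}}$ is the infimum of $S_{\omega,\mathbf{c}}=L+N+\omega Q+\mathbf{c}\cdot\mathbf{P}$ over the Nehari set $\{\Phi\in\mathcal{H}^1\setminus\{(\mathbf{0},\mathbf{0},\mathbf{0})\}:K_{\omega,\mathbf{c}}(\Phi)=0\}$. For $\Psi\in\mathcal{H}^1$ I would define
\[
(T_{\omega}\Psi)(x):=\sqrt{\omega}\,\Psi(\sqrt{\omega}\,x),
\]
which combines an amplitude factor and a spatial dilation, both equal to $\sqrt{\omega}$. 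The map $T_{\omega}$ is a bijection of $\mathcal{H}^1\setminus\{(\mathbf{0},\mathbf{0},\mathbf{0})\}$ onto itself with inverse $\Phi\mapsto\omega^{-1/2}\Phi(\,\cdot\,/\sqrt{\omega})$. I note at the outset that the standing hypothesis is preserved under the change of parameter $\mathbf{c}\mapsto\mathbf{c}/\sqrt{\omega}$: since $|\mathbf{c}/\sqrt{\omega}|^2=|\mathbf{c}|^2/\omega$, the inequality $1>\frac{\sigma|\mathbf{c}/\sqrt{\omega}|^2}{4}$ is equivalent to $\omega>\frac{\sigma|\mathbf{c}|^2}{4}$, so both minimization problems fall within the admissible regime and $\mu_{1,\mathbf{c}/\sqrt{\omega}}$ is defined.

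The first computation is to record how the four building blocks of the action scale under $T_{\omega}$. Since $L$ is quadratic with one gradient, $N$ is cubic with one derivative, $Q$ is the $L^2$-mass, and $\mathbf{P}$ is quadratic with one derivative, a change of variables $y=\sqrt{\omega}\,x$ yields
\[
L(T_{\omega}\Psi)=\omega^{2-\frac{d}{2}}L(\Psi),\quad N(T_{\omega}\Psi)=\omega^{2-\frac{d}{2}}N(\Psi),\quad Q(T_{\omega}\Psi)=\omega^{1-\frac{d}{2}}Q(\Psi),\quad \mathbf{P}(T_{\omega}\Psi)=\omega^{\frac{3}{2}-\frac{d}{2}}\mathbf{P}(\Psi).
\]
Multiplying the mass term by $\omega$ and the momentum term by $\mathbf{c}$, the key observation is that every resulting term carries the common prefactor $\omega^{2-\frac{d}{2}}$ once the shift $\mathbf{c}\mapsto\mathbf{c}/\sqrt{\omega}$ is absorbed, because $\mathbf{c}\cdot\mathbf{P}(T_{\omega}\Psi)=\omega^{2-\frac{d}{2}}\,\tfrac{\mathbf{c}}{\sqrt{\omega}}\cdot\mathbf{P}(\Psi)$ and $\omega Q(T_{\omega}\Psi)=\omega^{2-\frac{d}{2}}Q(\Psi)$. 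Hence
\[
S_{\omega,\mathbf{c}}(T_{\omega}\Psi)=\omega^{2-\frac{d}{2}}S_{1,\mathbf{c}/\sqrt{\omega}}(\Psi),\qquad K_{\omega,\mathbf{c}}(T_{\omega}\Psi)=\omega^{2-\frac{d}{2}}K_{1,\mathbf{c}/\sqrt{\omega}}(\Psi),
\]
the identity for $K$ following in exactly the same way because $K_{\omega,\mathbf{c}}$ is the same linear combination of $L,N,Q,\mathbf{P}$ with the parameters entering in the same positions.

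To conclude, since $\omega^{2-\frac{d}{2}}>0$, the transformation $T_{\omega}$ maps the Nehari set for $(\omega,\mathbf{c})$ bijectively onto the Nehari set for $(1,\mathbf{c}/\sqrt{\omega})$, and on these sets it multiplies the value of the action by the fixed constant $\omega^{2-\frac{d}{2}}$. Taking infima over the two constraint sets therefore gives $\mu_{\omega,\mathbf{c}}=\omega^{2-\frac{d}{2}}\mu_{1,\mathbf{c}/\sqrt{\omega}}$. The argument is essentially a bookkeeping of scaling exponents, and the only point requiring genuine care is the momentum term: it scales with the exponent $\frac{3}{2}-\frac{d}{2}$, different from the exponents of $L$, $N$ and $\omega Q$, so one must verify that absorbing a factor $\sqrt{\omega}$ into $\mathbf{c}$ restores the common prefactor $\omega^{2-\frac{d}{2}}$. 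This is exactly what the choice of equal amplitude and dilation $\sqrt{\omega}$ achieves, and it is the reason the shifted parameter is $\mathbf{c}/\sqrt{\omega}$ rather than some other rescaling of $\mathbf{c}$.
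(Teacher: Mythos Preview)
Your proof is correct and rests on the same scaling transformation $\Psi\mapsto\sqrt{\omega}\,\Psi(\sqrt{\omega}\,\cdot)$ as the paper, but your route is more direct. You work purely at the level of the constrained infima: you show that $T_\omega$ bijects the Nehari set for $(1,\mathbf{c}/\sqrt{\omega})$ onto that for $(\omega,\mathbf{c})$ (your sentence states the opposite direction, but this is harmless since $T_\omega$ is invertible) and that the action scales by the fixed factor $\omega^{2-d/2}$, so the infima are related by the same factor. The paper instead invokes the existence of a minimizer $\Psi\in\mathcal{M}_{1,\mathbf{c}/\sqrt{\omega}}$ (Proposition~\ref{GS_exist}) and the identification $\mathcal{M}_{1,\mathbf{c}/\sqrt{\omega}}=\mathcal{G}_{1,\mathbf{c}/\sqrt{\omega}}$ (Proposition~\ref{MGeq}), then argues that the rescaled function $\Psi_\omega$ is a ground state for $(\omega,\mathbf{c})$ by comparing it against all elements of $\mathcal{E}_{\omega,\mathbf{c}}$. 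Your approach has the advantage of avoiding these nontrivial preliminary results altogether; the identity $\mu_{\omega,\mathbf{c}}=\omega^{2-d/2}\mu_{1,\mathbf{c}/\sqrt{\omega}}$ holds regardless of whether the infima are attained. The paper's approach, on the other hand, gives the slightly stronger byproduct that the ground-state sets themselves are related by the scaling, not just their energy levels.
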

\begin{proof}
%Because $1>\frac{\sigma}{4} \left|\frac{\mathbf{c}}%{\sqrt{\omega}}\right|^2$ holds, 
By Propositions \ref{GS_exist} and \ref{MGeq}, there exists $\Psi \in \mathcal{M}_{1,\frac{\mathbf{c}}{\sqrt{\omega}}}=\mathcal{G}_{1,\frac{\mathbf{c}}{\sqrt{\omega}}}$, which implies that $\Psi \in \mathcal{E}_{1,\frac{\mathbf{c}}{\sqrt{\omega}}}$.
%since $\mathcal{M}_{1,\frac{\mathbf{c}}%{\sqrt{\omega}}}=\mathcal{G}_{1,\frac{\mathbf{c}}{\sqrt{\omega}}}$ 
%by Proposition~\ref{MGeq}. 
Furthermore, we set
\[
\Psi_{\omega}(x):=\omega^{\frac{1}{2}}\Psi(\omega^{\frac{1}{2}}x). 
\]
Then, we have $\Psi_{\omega}\in \mathcal{E}_{\omega,\mathbf{c}}$ and by Remark~\ref{sol_scale},
\begin{equation}\label{psi_scale_eq}
S_{\omega,\mathbf{c}}(\Psi_{\omega})=\omega^{2-\frac{d}{2}}S_{1,\frac{\mathbf{c}}{\sqrt{\omega}}}(\Psi).
\end{equation}
Let $\Theta \in \mathcal{E}_{\omega,\mathbf{c}}$. 
We put 
\[
\Theta^{\omega}(x):=\omega^{-\frac{1}{2}}\Theta (\omega^{-\frac{1}{2}}x). 
\]
Then, we can see that $\Theta^{\omega}\in \mathcal{E}_{1,\frac{\mathbf{c}}{\sqrt{\omega}}}$ and 
\begin{equation}\label{theta_scale_eq}
S_{1,\frac{\mathbf{c}}{\sqrt{\omega}}}(\Theta^{\omega})=\omega^{\frac{d}{2}-2}S_{\omega,\mathbf{c}}(\Theta).
\end{equation}
On the other hand, because $\Psi$ is a minimizer of $\mathcal{M}_{1,\frac{\mathbf{c}}{\sqrt{\omega}}}$, we obtain
\[
S_{1,\frac{\mathbf{c}}{\sqrt{\omega}}}(\Psi)\le S_{1,\frac{\mathbf{c}}{\sqrt{\omega}}}(\Theta^{\omega})
\]
This implies that by (\ref{psi_scale_eq}) and (\ref{theta_scale_eq})
$S_{\omega,\mathbf{c}}(\Psi_{\omega})\le S_{\omega,\mathbf{c}}(\Theta)$. This implies $\Psi_{\omega}\in \mathcal{G}_{\omega,\mathbf{c}}=\mathcal{M}_{\omega,\mathbf{c}}$. 
As a result, we get
\[
\mu_{\omega,\mathbf{c}}=S_{\omega,\mathbf{c}}(\Psi_{\omega})
=\omega^{2-\frac{d}{2}}S_{1,\frac{\mathbf{c}}{\sqrt{\omega}}}(\Psi)
=\omega^{2-\frac{d}{2}}\mu_{1,\frac{\mathbf{c}}{\sqrt{\omega}}}. 
\]
\end{proof}
\begin{proposition}
Let $d\in \{1,2,3\}$ and $\alpha,\beta,\gamma>0$. 
Assume that $(\omega,\mathbf{c})\in \R\times \R^d$ 
satisfies $\omega >\frac{\sigma |\mathbf{c}|^2}{4}$.
If $\Phi\in \mathcal{M}_{\omega,\mathbf{c}}$, then 
it holds that
\begin{equation}\label{M_scale_eq}
2\omega Q(\Phi)
+\mathbf{c}\cdot \mathbf{P}(\Phi)
=(4-d)\omega^{2-\frac{d}{2}}
\mu_{1,\frac{\mathbf{c}}{\sqrt{\omega}}}. 
\end{equation}
\end{proposition}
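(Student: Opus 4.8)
The plan is to read off the target quantity $2\omega Q(\Phi)+\mathbf{c}\cdot\mathbf{P}(\Phi)$ purely algebraically from identities that a minimizer already satisfies, and then rewrite $\mu_{\omega,\mathbf{c}}$ using Proposition~\ref{mu_scale}. Since $\mathcal{M}_{\omega,\mathbf{c}}=\mathcal{G}_{\omega,\mathbf{c}}\subset\mathcal{E}_{\omega,\mathbf{c}}$ by Proposition~\ref{MGeq}, the element $\Phi$ is a weak solution of (\ref{ellip}), so Pohozaev's identity (Proposition~\ref{LN_iden}) applies and gives
\[
2L(\Phi)+\left(\frac{d}{2}+1\right)N(\Phi)+\mathbf{c}\cdot\mathbf{P}(\Phi)=0,
\]
while the Nehari constraint $K_{\omega,\mathbf{c}}(\Phi)=0$, expanded as in the proof of Proposition~\ref{MGeq}, reads
\[
2L(\Phi)+3N(\Phi)+2\omega Q(\Phi)+2\mathbf{c}\cdot\mathbf{P}(\Phi)=0.
\]

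First I would subtract the Pohozaev identity from the Nehari identity. The two $L(\Phi)$ terms cancel, and the result is
\[
\left(2-\frac{d}{2}\right)N(\Phi)+2\omega Q(\Phi)+\mathbf{c}\cdot\mathbf{P}(\Phi)=0,
\]
equivalently $2\omega Q(\Phi)+\mathbf{c}\cdot\mathbf{P}(\Phi)=\left(\frac{d}{2}-2\right)N(\Phi)$. Thus everything reduces to evaluating $N(\Phi)$ at the minimizer. For this I would use the relations collected in (\ref{SKL_eq}): since $K_{\omega,\mathbf{c}}(\Phi)=0$, the identity $K_{\omega,\mathbf{c}}=L_{\omega,\mathbf{c}}+3N$ gives $L_{\omega,\mathbf{c}}(\Phi)=-3N(\Phi)$, whereas $S_{\omega,\mathbf{c}}(\Phi)=\mu_{\omega,\mathbf{c}}$ together with $S_{\omega,\mathbf{c}}=\tfrac{1}{3}K_{\omega,\mathbf{c}}+\tfrac{1}{6}L_{\omega,\mathbf{c}}$ yields $L_{\omega,\mathbf{c}}(\Phi)=6\mu_{\omega,\mathbf{c}}$. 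Combining these gives $N(\Phi)=-2\mu_{\omega,\mathbf{c}}$, and substituting back produces $2\omega Q(\Phi)+\mathbf{c}\cdot\mathbf{P}(\Phi)=(4-d)\mu_{\omega,\mathbf{c}}$. Finally, Proposition~\ref{mu_scale} replaces $\mu_{\omega,\mathbf{c}}$ by $\omega^{2-\frac{d}{2}}\mu_{1,\frac{\mathbf{c}}{\sqrt{\omega}}}$, which is exactly (\ref{M_scale_eq}).

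The argument is essentially bookkeeping among identities already in hand, so there is no genuine analytic obstacle; the only point demanding care is consistency of the coefficient expansions of $K_{\omega,\mathbf{c}}$ and $L_{\omega,\mathbf{c}}$ in terms of $L,N,Q,\mathbf{P}$, so that the $L(\Phi)$ terms cancel cleanly upon subtraction and the evaluation $N(\Phi)=-2\mu_{\omega,\mathbf{c}}$ carries the correct sign. Getting these signs right is what turns $\left(\frac{d}{2}-2\right)N(\Phi)$ into the advertised factor $(4-d)$.
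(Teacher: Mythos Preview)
Your proof is correct and follows essentially the same approach as the paper: both combine the Nehari constraint $K_{\omega,\mathbf{c}}(\Phi)=0$ with Pohozaev's identity (Proposition~\ref{LN_iden}), extract $2\omega Q(\Phi)+\mathbf{c}\cdot\mathbf{P}(\Phi)=(4-d)\mu_{\omega,\mathbf{c}}$ by elementary algebra, and finish with Proposition~\ref{mu_scale}. The only cosmetic difference is that the paper solves the two identities simultaneously for $L(\Phi)$ and $N(\Phi)$ and then evaluates $S_{\omega,\mathbf{c}}(\Phi)$, whereas you subtract to eliminate $L(\Phi)$ and invoke $N(\Phi)=-2\mu_{\omega,\mathbf{c}}$ directly.
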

\begin{proof}
Because $\Phi$ is a minimizer of 
$\mu_{\omega,\mathbf{c}}$, it holds 
\begin{equation}\label{LM_eq}
K_{\omega, \mathbf{c}}(\Phi)
=2L(\Phi)+3N(\Phi)+2\omega Q(\Phi)+2\mathbf{c}\cdot \mathbf{P}(\Phi)=0.
\end{equation}
We also note that $\Phi$ is a weak solution to (\ref{ellip}) by Proposition~\ref{MGeq}. 
Therefore, by Proposition~\ref{LN_iden}, the identity
\begin{equation}\label{LN_eq}
2L(\Phi)+\left(\frac{d}{2}+1\right)N(\Phi)+\mathbf{c}\cdot \mathbf{P}(\Phi)=0
\end{equation}
holds. The identities (\ref{LM_eq}) and (\ref{LN_eq}) imply
\[
L(\Phi)=\frac{d+2}{4-d}\omega Q(\Phi)+\frac{d-1}{4-d}\mathbf{c}\cdot \mathbf{P}(\Phi),\ \ \ \text{and}\ \ \ 
N(\Phi)=-\frac{2}{4-d}\left(2\omega Q(\Phi)
+\mathbf{c}\cdot \mathbf{P}(\Phi)\right). 
\]
Hence we obtain
\[
\begin{split}
\mu_{\omega,\mathbf{c}}
=S_{\omega,\mathbf{c}}(\Phi)
&=L(\Phi)+N(\Phi)+\omega Q(\Phi)+\mathbf{c}\cdot \mathbf{P}(\Phi)
=\frac{1}{4-d}\left(2\omega Q(\Phi)+\mathbf{c}\cdot \mathbf{P}(\Phi)\right). 
\end{split}
\]
On the other hand, by Proposition~\ref{mu_scale}, it holds that
$\mu_{\omega,\mathbf{c}}=\omega^{2-\frac{d}{2}}\mu_{1,\frac{\mathbf{c}}{\sqrt{\omega}}}$. Therefore, we get (\ref{M_scale_eq}). 
\end{proof}
\section{Proof of global well-posedness 
and stability results}\label{GWP_Sta}
In this section, we give the proofs of 
Theorems~\ref{gwp}, ~\ref{stability} and Corollaries~\ref{gwp_2d}, ~\ref{stab_1d_result}. 
\subsection{Proof of global well-posedness}
We define the subsets in the energy space $\mathcal{H}^1$
\begin{equation}\label{A_pm_def}
\begin{split}
   \mathcal{A}^{+}_{\omega,\mathbf{c}}&:=\left\{\Psi\in \mathcal{H}^1\backslash \{(\mathbf{0},\mathbf{0},\mathbf{0})\}\ |\ S_{\omega,\mathbf{c}}(\Psi)< \mu_{\omega,\mathbf{c}},\ K_{\omega,\mathbf{c}}(\Psi)> 0\right\},\\
   \mathcal{A}^{-}_{\omega,\mathbf{c}}&:=\left\{\Psi\in \mathcal{H}^1\backslash \{(\mathbf{0},\mathbf{0},\mathbf{0})\}\ |\ S_{\omega,\mathbf{c}}(\Psi)< \mu_{\omega,\mathbf{c}},\ K_{\omega,\mathbf{c}}(\Psi)< 0\right\}.
 \end{split}
\end{equation}
We prove that $\mathcal{A}^{\pm}_{\omega,\mathbf{c}}$ is invariant under the flow of (\ref{NLS}).
\begin{proposition}\label{A_sol_inv}
Let $d\in \{1,2,3\}$ and $\alpha,\beta,\gamma>0$. 
Assume that $(\omega,\mathbf{c})\in \R\times \R^d$ 
satisfies $\omega >\frac{\sigma |\mathbf{c}|^2}{4}$.
%=(\min\{8\alpha,4\beta,4\gamma\})^{-1}|\mathbf{c}|^2$.
Then the sets $\mathcal{A}^{\pm}_{\omega,\mathbf{c}}$ are 
invariant under the flow of {\rm (\ref{NLS})}. 
More precisely, 
if $U_0$ belongs to $\mathcal{A}^{+}_{\omega,\mathbf{c}}$ $($resp $\mathcal{A}^{-}_{\omega,\mathbf{c}})$, 
then the solution $U(t)$ of {\rm (\ref{NLS})} with $U(0)=U_0$ also belongs to 
$\mathcal{A}^{+}_{\omega,\mathbf{c}}$ $($resp $\mathcal{A}^{-}_{\omega,\mathbf{c}})$ 
for all $t\in I_{\max}$. 
\end{proposition}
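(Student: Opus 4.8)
The plan is to combine three facts: the action $S_{\omega,\mathbf{c}}$ is conserved along the flow of (\ref{NLS}), the map $t\mapsto K_{\omega,\mathbf{c}}(U(t))$ is continuous, and $\mu_{\omega,\mathbf{c}}$ is characterized as the least action over the Nehari constraint $\{K_{\omega,\mathbf{c}}=0\}$. First I would record that $S_{\omega,\mathbf{c}}=L+N+\omega Q+\mathbf{c}\cdot \mathbf{P}$ is a linear combination of the conserved energy $E=L+N$, the charge $Q$, and the momentum $\mathbf{P}$ of (\ref{NLS}), so that $S_{\omega,\mathbf{c}}(U(t))=S_{\omega,\mathbf{c}}(U_0)$ for every $t\in I_{\max}$. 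In particular, if $U_0\in \mathcal{A}^{\pm}_{\omega,\mathbf{c}}$, then $S_{\omega,\mathbf{c}}(U(t))=S_{\omega,\mathbf{c}}(U_0)<\mu_{\omega,\mathbf{c}}$ throughout $I_{\max}$, so the first defining inequality is automatically preserved.

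Next I would check that the flow never leaves $\mathcal{H}^1\setminus\{(\mathbf{0},\mathbf{0},\mathbf{0})\}$: since the charge $Q$ is conserved and $Q(U_0)>0$ (as $U_0\neq (\mathbf{0},\mathbf{0},\mathbf{0})$ and $Q$ is positive definite), we get $Q(U(t))=Q(U_0)>0$, hence $U(t)\neq (\mathbf{0},\mathbf{0},\mathbf{0})$ for all $t\in I_{\max}$. The remaining and decisive point is the preservation of the sign of $K_{\omega,\mathbf{c}}(U(t))$. I would treat the $\mathcal{A}^{+}_{\omega,\mathbf{c}}$ case (the $\mathcal{A}^{-}_{\omega,\mathbf{c}}$ case being identical). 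Arguing by contradiction, suppose $K_{\omega,\mathbf{c}}(U(t_1))\le 0$ for some $t_1\in I_{\max}$. Since $K_{\omega,\mathbf{c}}(U(0))=K_{\omega,\mathbf{c}}(U_0)>0$ and $t\mapsto U(t)$ is continuous into $\mathcal{H}^1$, the scalar map $t\mapsto K_{\omega,\mathbf{c}}(U(t))$ is continuous, so by the intermediate value theorem there is a first time $t_0$ with $K_{\omega,\mathbf{c}}(U(t_0))=0$. As $U(t_0)\neq (\mathbf{0},\mathbf{0},\mathbf{0})$ by the previous step, the variational characterization $\mu_{\omega,\mathbf{c}}=\inf\{S_{\omega,\mathbf{c}}(\Psi)\mid \Psi\neq (\mathbf{0},\mathbf{0},\mathbf{0}),\ K_{\omega,\mathbf{c}}(\Psi)=0\}$ (which follows from (\ref{SKL_eq}) and (\ref{mu_L_eq})) forces $S_{\omega,\mathbf{c}}(U(t_0))\ge \mu_{\omega,\mathbf{c}}$, contradicting $S_{\omega,\mathbf{c}}(U(t_0))=S_{\omega,\mathbf{c}}(U_0)<\mu_{\omega,\mathbf{c}}$. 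Hence $K_{\omega,\mathbf{c}}(U(t))>0$ on all of $I_{\max}$, and therefore $U(t)\in \mathcal{A}^{+}_{\omega,\mathbf{c}}$ for every $t$.

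I expect the only genuinely nontrivial ingredient to be the continuity of $t\mapsto K_{\omega,\mathbf{c}}(U(t))$, which rests on the continuity of the solution map in the energy norm $\mathcal{H}^1$; this is precisely where the local well-posedness theory of (\ref{NLS}) must be invoked. Given that input, every other step is a direct consequence of the conservation laws and of the positivity $\mu_{\omega,\mathbf{c}}>0$ together with its infimum characterization established in Proposition~\ref{mu_prop} and (\ref{mu_L_eq}).
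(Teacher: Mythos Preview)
Your proof is correct and follows essentially the same approach as the paper: conservation of $S_{\omega,\mathbf{c}}$ and $Q$ plus the intermediate value theorem applied to $t\mapsto K_{\omega,\mathbf{c}}(U(t))$, with the contradiction coming from the definition of $\mu_{\omega,\mathbf{c}}$ as the infimum of $S_{\omega,\mathbf{c}}$ over the Nehari constraint. The paper phrases the contradiction contrapositively (``$K_{\omega,\mathbf{c}}(U(t_0))=0$ and $S_{\omega,\mathbf{c}}(U(t_0))<\mu_{\omega,\mathbf{c}}$ force $U(t_0)=0$''), but this is logically identical to your version.
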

\begin{proof}
Let $U_0\in \mathcal{A}^{+}_{\omega,\mathbf{c}}$ 
and $U$ be the solution to {\rm (\ref{NLS})} with $U(0)=U_0$ on $I_{\max}$. 
Because $S_{\omega,\mathbf{c}}(U_0)<\mu_{\omega,\mathbf{c}}$ holds and 
$S_{\omega,\mathbf{c}}$ is a conserved quantity, 
we have $S_{\omega,\mathbf{c}}(U(t))<\mu_{\omega,\mathbf{c}}$ for all $t\in I_{\max}$.
Furthermore, because $U_0\ne 0$ and $Q$ is a conserved quantity, 
we obtain $Q(U(t))=Q(U_0)\ne 0$. It implies that $U(t)\ne 0$ for all $t\in I_{\max}$. 
Therefore, it suffices to show that $K_{\omega,\mathbf{c}}(U(t))>0$ for all $t\in I_{\max}$. 
Assume $K_{\omega,\mathbf{c}}(U(t_*))<0$ for some $t_*\in I_{\max}$. 
Then, by the continuity of $t\mapsto K_{\omega,\mathbf{c}}(U(t))$, 
there exists $t_0\in (0,t_*)$ such that $K_{\omega,\mathbf{c}}(U(t_0))=0$. 
This implies $U(t_0)=0$ because $S_{\omega,\mathbf{c}}(U(t_0))<\mu_{\omega,\mathbf{c}}$. 
It is a contradiction. 
The invariance of $\mathcal{A}^{-}_{\omega,\mathbf{c}}$ can be proved 
by the same manner. 
\end{proof}
Now we prove our global well-posedness results (Theorem \ref{gwp} and Corollary~\ref{gwp_2d}). 
\begin{proof}[Proof of Theorem~\ref{gwp}]
Let $U$ be a solution on the maximal existence time interval $I_{\max}$ to (\ref{NLS}) with $U(0)=U_0\in \mathcal{H}^1\backslash\{0\}$ satisfying
\[
S_{\omega,\mathbf{c}}(U_0)<\mu_{\omega,\mathbf{c}},\ \ 
K_{\omega,\mathbf{c}}(U_0)>0.  
\]
That is $U_0\in \mathcal{A}^{+}_{\omega,\mathbf{c}}$. 
By Proposition~\ref{A_sol_inv}, 
$U(t)\in \mathcal{A}^{+}_{\omega,\mathbf{c}}$ 
holds for any $t\in I_{\max}$. 
This implies $K_{\omega,\mathbf{c}}(U(t))>0$ 
and by (\ref{SKL_eq}) we have
\[
L_{\omega,\mathbf{c}}(U(t))\le 6S_{\omega,\mathbf{c}}(U(t)).
\]
Thus by Proposition~\ref{L_bdd}, we obtain
\[
\|U(t)\|_{\mathcal{H}^1}^2
\le \frac{6}{C}S_{\omega,\mathbf{c}}(U(t))
=\frac{6}{C}S_{\omega,\mathbf{c}}(U_0) 
\]
for any $t\in I_{\max}$ and some $C>0$ independent of $t$,
since $S_{\omega,\mathbf{c}}$ is a conserved quantity, which implies that $|I_{\max}|=\infty$ by the blow-up alternative.
\end{proof}
\begin{proof}[Proof of Corollary~\ref{gwp_2d}]
We assume $\Phi \in \mathcal{M}_{1,\mathbf{c}_0}$ 
with $|\mathbf{c}_0|<\frac{2}{\sqrt{\sigma}}$ 
and set $\mathbf{c}:=\sqrt{\omega}\ \mathbf{c}_0$ 
for $\omega >0$. 
Furthermore, we can assume $Q(U_0)>0$ because 
$U_0=0$ when $Q(U_0)=0$. 
Thanks to Theorem~\ref{gwp}, it is enough to show that
\begin{equation}\label{SK_est_gwp}
S_{\omega,\mathbf{c}}(U_0)<\mu_{\omega,\mathbf{c}},\ \ 
K_{\omega,\mathbf{c}}(U_0)>0
\end{equation}
hold for some $\omega >0$. 
Because
\[
\begin{split}
K_{\omega,\mathbf{c}}(U_0)
&=2L(U_0)+3N(U_0)+2\omega Q(U_0)+\mathbf{c}\cdot \mathbf{P}(U_0)\\
&=2\omega Q(U_0)+\sqrt{\omega}\ \mathbf{c}_0\cdot \mathbf{P}(U_0)
+2L(U_0)+3N(U_0)
\end{split}
\]
and $Q(U_0)>0$, 
the second inequality in (\ref{SK_est_gwp}) holds 
for sufficiently large $\omega >0$. 

Next, to prove the first inequality in (\ref{SK_est_gwp}), 
we define the function $f:(0,\infty)\rightarrow \R$ as
\[
f(\omega):=\mu_{\omega,\mathbf{c}}-S_{\omega,\mathbf{c}}(U_0)
=\mu_{\omega,\sqrt{\omega}\ \mathbf{c}_0}-S_{\omega,\sqrt{\omega}\ \mathbf{c}_0}(U_0).
\]
Because $\Phi \in \mathcal{M}_{1,\mathbf{c}_0}$, we have
\[
\mathbf{c}_0\cdot \mathbf{P}(\Phi)=-2(L(\Phi)+N(\Phi))=-2E(\Phi) 
\]
by Proposition~\ref{LN_iden} with $d=2$. 
This identity and Proposition~\ref{mu_scale} with $d=2$ imply that
\[
\mu_{\omega,\sqrt{\omega}\ \mathbf{c}_0}
=\omega \mu_{1,\mathbf{c}_0}
=\omega S_{1,\mathbf{c}_0}(\Phi)
=\omega (Q(\Phi)-E(\Phi)). 
\]
Therefore, we obtain
\[
\begin{split}
f(\omega)
&=\omega (Q(\Phi)-E(\Phi))
-(E(U_0)+\omega Q(U_0)+\sqrt{\omega}\ \mathbf{c}_0\cdot \mathbf{P}(U_0))\\
&=\omega (Q(\Phi)-E(\Phi)-Q(U_0))-\sqrt{\omega}\ \mathbf{c}_0\cdot \mathbf{P}(U_0)-E(U_0). 
\end{split}
\]
This says that if $Q(U_0)<Q(\Phi)-E(\Phi)$, 
then $f(\omega)>0$ holds for sufficiently large $\omega >0$. 
\end{proof}
We define the subsets in the energy space $\mathcal{H}^1$
\begin{equation}\label{gwp_set_B}
\begin{split}
   \mathcal{B}^{+}_{\omega,\mathbf{c}}&:=\left\{\Psi\in \mathcal{H}^1\backslash \{(\mathbf{0},\mathbf{0},\mathbf{0})\}\ |\ S_{\omega,\mathbf{c}}(\Psi)< \mu_{\omega,\mathbf{c}},\ N(\Psi)> -2\mu_{\omega,\mathbf{c}}\right\},\\
   \mathcal{B}^{-}_{\omega,\mathbf{c}}&:=\left\{\Psi\in \mathcal{H}^1\backslash \{(\mathbf{0},\mathbf{0},\mathbf{0})\}\ |\ S_{\omega,\mathbf{c}}(\Psi)< \mu_{\omega,\mathbf{c}},\ N(\Psi)< -2\mu_{\omega,\mathbf{c}}\right\}.
 \end{split}
\end{equation}
The following Proposition will be used to prove the 
stability of ground states set in next subsection. 
\begin{proposition}\label{sol_AB}
Let $d\in \{1,2,3\}$ and $\alpha,\beta,\gamma>0$. 
Assume that $(\omega,\mathbf{c})\in \R\times \R^d$ 
satisfies $\omega >\frac{\sigma |\mathbf{c}|^2}{4}$.
Then, $\mathcal{A}^{+}_{\omega,\mathbf{c}}=\mathcal{B}^{+}_{\omega,\mathbf{c}}$ and 
$\mathcal{A}^{-}_{\omega,\mathbf{c}}=\mathcal{B}^{-}_{\omega,\mathbf{c}}$ hold, 
where $\mathcal{A}^{+}_{\omega,\mathbf{c}}$ and $\mathcal{A}^{-}_{\omega,\mathbf{c}}$ 
are defined in {\rm (\ref{A_pm_def})}.
\end{proposition}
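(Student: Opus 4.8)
The plan is to show the two set equalities $\mathcal{A}^{+}_{\omega,\mathbf{c}}=\mathcal{B}^{+}_{\omega,\mathbf{c}}$ and $\mathcal{A}^{-}_{\omega,\mathbf{c}}=\mathcal{B}^{-}_{\omega,\mathbf{c}}$ by exploiting the algebraic relation $K_{\omega,\mathbf{c}}=L_{\omega,\mathbf{c}}+3N$ together with the identity $L_{\omega,\mathbf{c}}=6S_{\omega,\mathbf{c}}-2K_{\omega,\mathbf{c}}$ coming from (\ref{SKL_eq}). The common hypothesis $S_{\omega,\mathbf{c}}(\Psi)<\mu_{\omega,\mathbf{c}}$ appears in all four sets, so the real content is to prove, for $\Psi\in\mathcal{H}^1\setminus\{\mathbf 0\}$ with $S_{\omega,\mathbf{c}}(\Psi)<\mu_{\omega,\mathbf{c}}$, the equivalence
\[
K_{\omega,\mathbf{c}}(\Psi)>0 \iff N(\Psi)>-2\mu_{\omega,\mathbf{c}},
\]
and likewise with all inequalities reversed. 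Combining $K_{\omega,\mathbf{c}}=L_{\omega,\mathbf{c}}+3N$ with $L_{\omega,\mathbf{c}}=6S_{\omega,\mathbf{c}}-2K_{\omega,\mathbf{c}}$ gives $3K_{\omega,\mathbf{c}}(\Psi)=6S_{\omega,\mathbf{c}}(\Psi)+3N(\Psi)$, i.e.
\[
K_{\omega,\mathbf{c}}(\Psi)=2S_{\omega,\mathbf{c}}(\Psi)+N(\Psi),
\]
which turns the sign of $K_{\omega,\mathbf{c}}$ into a statement about $N(\Psi)$ relative to $-2S_{\omega,\mathbf{c}}(\Psi)$.

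First I would prove the inclusion $\mathcal{A}^{+}_{\omega,\mathbf{c}}\subset\mathcal{B}^{+}_{\omega,\mathbf{c}}$: if $K_{\omega,\mathbf{c}}(\Psi)>0$ and $S_{\omega,\mathbf{c}}(\Psi)<\mu_{\omega,\mathbf{c}}$, then from the displayed identity $N(\Psi)=K_{\omega,\mathbf{c}}(\Psi)-2S_{\omega,\mathbf{c}}(\Psi)>-2S_{\omega,\mathbf{c}}(\Psi)>-2\mu_{\omega,\mathbf{c}}$, so $\Psi\in\mathcal{B}^{+}_{\omega,\mathbf{c}}$. This direction is immediate and uses only the strict inequality $S_{\omega,\mathbf{c}}(\Psi)<\mu_{\omega,\mathbf{c}}$. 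The reverse inclusion $\mathcal{B}^{+}_{\omega,\mathbf{c}}\subset\mathcal{A}^{+}_{\omega,\mathbf{c}}$ is where the subtlety lies: knowing $N(\Psi)>-2\mu_{\omega,\mathbf{c}}$ and $S_{\omega,\mathbf{c}}(\Psi)<\mu_{\omega,\mathbf{c}}$ only gives $K_{\omega,\mathbf{c}}(\Psi)=2S_{\omega,\mathbf{c}}(\Psi)+N(\Psi)>-2\mu_{\omega,\mathbf{c}}+(-2\mu_{\omega,\mathbf{c}})$, which is not yet positive.

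The hard part will be ruling out the intermediate regime $K_{\omega,\mathbf{c}}(\Psi)\le 0$ while $N(\Psi)>-2\mu_{\omega,\mathbf{c}}$ and $S_{\omega,\mathbf{c}}(\Psi)<\mu_{\omega,\mathbf{c}}$. I would argue by contradiction using Proposition~\ref{mu_prop}\,(ii): if $K_{\omega,\mathbf{c}}(\Psi)\le 0$ for $\Psi\ne\mathbf 0$, then $L_{\omega,\mathbf{c}}(\Psi)\ge 6\mu_{\omega,\mathbf{c}}$, whence by (\ref{SKL_eq}),
\[
6S_{\omega,\mathbf{c}}(\Psi)=L_{\omega,\mathbf{c}}(\Psi)+2K_{\omega,\mathbf{c}}(\Psi)\le 6S_{\omega,\mathbf{c}}(\Psi),
\]
and using $L_{\omega,\mathbf{c}}(\Psi)\ge 6\mu_{\omega,\mathbf{c}}$ together with $K_{\omega,\mathbf{c}}(\Psi)\le 0$ forces $6S_{\omega,\mathbf{c}}(\Psi)=L_{\omega,\mathbf{c}}(\Psi)+2K_{\omega,\mathbf{c}}(\Psi)\ge 6\mu_{\omega,\mathbf{c}}+2K_{\omega,\mathbf{c}}(\Psi)$; I would then show this contradicts $S_{\omega,\mathbf{c}}(\Psi)<\mu_{\omega,\mathbf{c}}$ by tracking the sign of $K_{\omega,\mathbf{c}}(\Psi)$ carefully, treating the borderline $K_{\omega,\mathbf{c}}(\Psi)=0$ separately (where the definition of $\mu_{\omega,\mathbf{c}}$ via (\ref{mu_L_eq}) directly gives $S_{\omega,\mathbf{c}}(\Psi)\ge\mu_{\omega,\mathbf{c}}$). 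This closes $\mathcal{B}^{+}_{\omega,\mathbf{c}}\subset\mathcal{A}^{+}_{\omega,\mathbf{c}}$. The equality $\mathcal{A}^{-}_{\omega,\mathbf{c}}=\mathcal{B}^{-}_{\omega,\mathbf{c}}$ then follows by the symmetric argument with reversed inequalities, again invoking Proposition~\ref{mu_prop}\,(ii) in the strict form to handle $K_{\omega,\mathbf{c}}(\Psi)<0$.
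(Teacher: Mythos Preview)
Your overall strategy matches the paper's exactly: use the identity $K_{\omega,\mathbf{c}}=2S_{\omega,\mathbf{c}}+N$ (this is (\ref{SKL_eq4})) for the ``easy'' inclusions, and invoke Proposition~\ref{mu_prop}\,(ii) for the ``hard'' ones. The inclusion $\mathcal{A}^{+}_{\omega,\mathbf{c}}\subset\mathcal{B}^{+}_{\omega,\mathbf{c}}$ is fine as you wrote it.

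There is, however, a genuine gap in your argument for $\mathcal{B}^{+}_{\omega,\mathbf{c}}\subset\mathcal{A}^{+}_{\omega,\mathbf{c}}$. After assuming $K_{\omega,\mathbf{c}}(\Psi)\le 0$ and deducing $L_{\omega,\mathbf{c}}(\Psi)\ge 6\mu_{\omega,\mathbf{c}}$ from Proposition~\ref{mu_prop}\,(ii), you write the tautology $6S_{\omega,\mathbf{c}}(\Psi)\le 6S_{\omega,\mathbf{c}}(\Psi)$ and then arrive at $6S_{\omega,\mathbf{c}}(\Psi)\ge 6\mu_{\omega,\mathbf{c}}+2K_{\omega,\mathbf{c}}(\Psi)$. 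Since $K_{\omega,\mathbf{c}}(\Psi)\le 0$, this inequality does \emph{not} contradict $S_{\omega,\mathbf{c}}(\Psi)<\mu_{\omega,\mathbf{c}}$, and no amount of ``tracking the sign of $K_{\omega,\mathbf{c}}$'' will close it: the hypothesis $N(\Psi)>-2\mu_{\omega,\mathbf{c}}$ has not yet been used, and it must enter. The paper resolves this cleanly by rewriting (\ref{SKL_eq}) as $S_{\omega,\mathbf{c}}=\tfrac{1}{2}L_{\omega,\mathbf{c}}+N$, so that
\[
S_{\omega,\mathbf{c}}(\Psi)=\tfrac{1}{2}L_{\omega,\mathbf{c}}(\Psi)+N(\Psi)\ge 3\mu_{\omega,\mathbf{c}}+N(\Psi)>3\mu_{\omega,\mathbf{c}}-2\mu_{\omega,\mathbf{c}}=\mu_{\omega,\mathbf{c}},
\]
which is the desired contradiction. (Your route can be salvaged by substituting $K_{\omega,\mathbf{c}}=2S_{\omega,\mathbf{c}}+N$ back into $6S_{\omega,\mathbf{c}}\ge 6\mu_{\omega,\mathbf{c}}+2K_{\omega,\mathbf{c}}$, which yields the same thing.)

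For $\mathcal{A}^{-}_{\omega,\mathbf{c}}=\mathcal{B}^{-}_{\omega,\mathbf{c}}$, note that the roles are not quite symmetric to the $+$ case: the inclusion $\mathcal{B}^{-}_{\omega,\mathbf{c}}\subset\mathcal{A}^{-}_{\omega,\mathbf{c}}$ is the direct one (from $K_{\omega,\mathbf{c}}=2S_{\omega,\mathbf{c}}+N<2\mu_{\omega,\mathbf{c}}-2\mu_{\omega,\mathbf{c}}=0$), while $\mathcal{A}^{-}_{\omega,\mathbf{c}}\subset\mathcal{B}^{-}_{\omega,\mathbf{c}}$ is the one requiring Proposition~\ref{mu_prop}\,(ii) in its strict form: $K_{\omega,\mathbf{c}}(\Psi)<0$ gives $L_{\omega,\mathbf{c}}(\Psi)>6\mu_{\omega,\mathbf{c}}$, hence $N(\Psi)=\tfrac{1}{3}(K_{\omega,\mathbf{c}}(\Psi)-L_{\omega,\mathbf{c}}(\Psi))<-2\mu_{\omega,\mathbf{c}}$.
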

\begin{proof}
We first prove $\mathcal{A}^{+}_{\omega,\mathbf{c}}= \mathcal{B}^{+}_{\omega,\mathbf{c}}$. 
Let $U\in \mathcal{A}^{+}_{\omega,\mathbf{c}}$. 
Because $S_{\omega,\mathbf{c}}(U)<\mu_{\omega,\mathbf{c}}$ and $K_{\omega,\mathbf{c}}(U)>0$ hold 
%for $U\in \mathcal{A}^{+}_{\omega,\mathbf{c}}$, 
we obtain $
N(U)>-2\mu_{\omega,\mathbf{c}}$
by (\ref{SKL_eq4}). 
This says $U\in \mathcal{B}^{+}_{\omega,\mathbf{c}}$. 
Conversely, let $V\in \mathcal{B}^{+}_{\omega,\mathbf{c}}$. 
We assume $K_{\omega,\mathbf{c}}(V)\le 0$. 
Then, by Proposition~\ref{mu_prop} (ii), we have $L_{\omega,\mathbf{c}}(V)\ge 6\mu_{\omega,\mathbf{c}}$. 
Since $N(V)>-2\mu_{\omega,\mathbf{c}}$, this and (\ref{SKL_eq}) imply
\[
S_{\omega,\mathbf{c}}(V)=\frac{1}{2}L_{\omega,\mathbf{c}}(V)+N(V)>\mu_{\omega,\mathbf{c}},
\]
which contradicts to $V\in \mathcal{B}^{+}_{\omega,\mathbf{c}}$. 
Therefore, we get $K_{\omega,\mathbf{c}}(V)>0$, which implies $V\in \mathcal{A}^{+}_{\omega,\mathbf{c}}$.

Next, we prove $\mathcal{A}^{-}_{\omega,\mathbf{c}}= \mathcal{B}^{-}_{\omega,\mathbf{c}}$. 
Let $U\in \mathcal{A}^{-}_{\omega,\mathbf{c}}$.
Then, $K_{\omega,\mathbf{c}}(U)<0$ holds and 
we have $L_{\omega,\mathbf{c}}(U)>6\mu_{\omega,\mathbf{c}}$ by Proposition~\ref{mu_prop} (ii). 
Therefore, we obtain
\[
N(U)=\frac{1}{3}(K_{\omega,\mathbf{c}}(U)-L_{\omega,\mathbf{c}}(U))<-2\mu_{\omega,\mathbf{c}}.
\]
This says $U\in \mathcal{B}^{-}_{\omega,\mathbf{c}}$. 
Conversely, let $V\in \mathcal{B}^{-}_{\omega,\mathbf{c}}$. Then, 
by (\ref{SKL_eq4}) and $S_{\omega,\mathbf{c}}<\mu_{\omega,\mathbf{c}}$, we have
\[
K_{\omega,\mathbf{c}}(V)=2S_{\omega,\mathbf{c}}(V)+N(V)<0.
\]
This says $V\in \mathcal{A}^{-}_{\omega,\mathbf{c}}$.
\end{proof}
\begin{remark}
Let $\alpha$, $\beta$, $\gamma>0$, $\omega>\frac{\sigma |c|^2}{4}$, and $U_0\in \mathcal{B}^{+}_{\omega,\mathbf{c}}$. 
Then, we can obtain the global solution to (\ref{NLS}) with $U|_{t=0}=U_0$ 
by Theorem~\ref{gwp} and Proposition~\ref{sol_AB}. 
We note that $U_0\in \mathcal{B}^{+}_{\omega,\mathbf{c}}$ satisfies either
\[
{\rm (i)}\ -2\mu_{\omega,\mathbf{c}}<N(U_0)<0\ \ \ \ {\rm or}\ \ \ \ {\rm (ii)}\ N(U_0)\ge 0\ \ (then\ E(U_0)>0). 
\]
While the case (i) corresponds to the focusing case with small nonlinear effect, the case (ii) corresponds to the defocusing case. 
\end{remark}
\subsection{Proof of stability of ground states set}
\label{proof of stability}
In this subsection, we write $\mu_{\omega,\mathbf{c}}=\mu (\omega,\mathbf{c})$. 
For fixed $(\omega, \mathbf{c})\in \R\times \R^d$ with $\omega >\frac{\sigma |\mathbf{c}|^2}{4}$, we define the function $h=h_{\omega,\mathbf{c}}$ on $\bigl(-\infty,\sqrt{\omega}\bigl)$ as 
\[
h(\tau ):=
\mu \bigg((\sqrt{\omega}-\tau)^2,\frac{\mathbf{c}}{\sqrt{\omega}}(\sqrt{\omega}-\tau)\bigg)
=(\sqrt{\omega}-\tau)^{4-d}
\mu \left(1,\frac{\mathbf{c}}{\sqrt{\omega}}\right), 
\]
where to obtain the second equality, we have used Proposition~\ref{mu_scale}. 
Namely, the function $h$ is a restriction of $\mu$ 
on a so-called scaling curve
\[
\mathcal{C}_{\omega,\mathbf{c}}
:=\left\{\left.\left((\sqrt{\omega}-\tau)^2,\frac{\mathbf{c}}{\sqrt{\omega}}(\sqrt{\omega}-\tau )\right)\ \right|\ \tau < \sqrt{\omega}\ \right\}
\]
%which is called a scaling curve. 
We note that
\[
\begin{split}
h'(\tau)&=-(4-d)(\sqrt{\omega}-\tau)^{3-d}\mu \left(1,\frac{\mathbf{c}}{\sqrt{\omega}}\right),\\
h''(\tau)&=(3-d)(4-d)(\sqrt{\omega}-\tau)^{2-d}\mu \left(1,\frac{\mathbf{c}}{\sqrt{\omega}}\right),
\end{split}
\]
and $h$ is strictly decreasing on $\bigl(-\infty,\sqrt{\omega}\bigl)$. By using these identities and (\ref{M_scale_eq}), the following lemma is obtained . 
\begin{lemma}\label{h_deriv_0_val}
    Let $d\in \{1,2\}$ and $\alpha,\beta,\gamma>0$. 
Assume that $(\omega,\mathbf{c})\in \R\times \R^d$ 
satisfies $\omega >\frac{\sigma |\mathbf{c}|^2}{4}$.
If $\Phi\in \mathcal{M}_{\omega,\mathbf{c}}$, then 
it holds that
\[
\begin{split}
h(0)&=\mu (\omega,\mathbf{c})=S_{\omega,\mathbf{c}}(\Phi),\\
h'(0)&=-\frac{1}{\sqrt{\omega}}(2\omega Q(\Phi)+\mathbf{c}\cdot \mathbf{P}(\Phi)),\\
h''(0)&=\frac{3-d}{\omega}(2\omega Q(\Phi)+\mathbf{c}\cdot \mathbf{P}(\Phi)). 
\end{split}
\]
\end{lemma}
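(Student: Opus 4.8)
The plan is to compute $h(0)$, $h'(0)$, and $h''(0)$ directly by substituting $\tau=0$ into the explicit formulas for $h$, $h'$, and $h''$ displayed just above the lemma, and then to rewrite the resulting expressions using Proposition~\ref{mu_scale} and the identity (\ref{M_scale_eq}) so that the scale-invariant quantity $\mu(1,\frac{\mathbf{c}}{\sqrt{\omega}})$ is replaced by dimensionful data attached to the given minimizer $\Phi$. Since $\Phi\in\mathcal{M}_{\omega,\mathbf{c}}$, we have $S_{\omega,\mathbf{c}}(\Phi)=\mu_{\omega,\mathbf{c}}=\mu(\omega,\mathbf{c})$ by definition, which will dispose of the first identity immediately.

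For the value $h(0)$, setting $\tau=0$ in the second expression for $h$ gives $h(0)=\omega^{\frac{4-d}{2}}\mu(1,\frac{\mathbf{c}}{\sqrt{\omega}})=\mu(\omega,\mathbf{c})$ by Proposition~\ref{mu_scale} (with exponent $2-\frac{d}{2}=\frac{4-d}{2}$), and this equals $S_{\omega,\mathbf{c}}(\Phi)$ as noted. For $h'(0)$ and $h''(0)$ I would substitute $\tau=0$ into the displayed derivatives, yielding
\[
h'(0)=-(4-d)\omega^{\frac{3-d}{2}}\mu\Bigl(1,\tfrac{\mathbf{c}}{\sqrt{\omega}}\Bigr),\qquad
h''(0)=(3-d)(4-d)\omega^{\frac{2-d}{2}}\mu\Bigl(1,\tfrac{\mathbf{c}}{\sqrt{\omega}}\Bigr).
\]
The crucial algebraic step is to eliminate the factor $(4-d)\omega^{2-\frac{d}{2}}\mu(1,\frac{\mathbf{c}}{\sqrt{\omega}})$ in favor of $2\omega Q(\Phi)+\mathbf{c}\cdot\mathbf{P}(\Phi)$. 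Indeed, (\ref{M_scale_eq}) reads precisely
\[
2\omega Q(\Phi)+\mathbf{c}\cdot\mathbf{P}(\Phi)=(4-d)\omega^{2-\frac{d}{2}}\mu\Bigl(1,\tfrac{\mathbf{c}}{\sqrt{\omega}}\Bigr),
\]
so that $(4-d)\omega^{\frac{3-d}{2}}\mu(1,\frac{\mathbf{c}}{\sqrt{\omega}})=\omega^{-1/2}\bigl(2\omega Q(\Phi)+\mathbf{c}\cdot\mathbf{P}(\Phi)\bigr)$, giving the asserted formula for $h'(0)$, and $(3-d)(4-d)\omega^{\frac{2-d}{2}}\mu(1,\frac{\mathbf{c}}{\sqrt{\omega}})=\frac{3-d}{\omega}\bigl(2\omega Q(\Phi)+\mathbf{c}\cdot\mathbf{P}(\Phi)\bigr)$, giving the formula for $h''(0)$. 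Matching the powers of $\omega$ carefully (checking that $\frac{3-d}{2}-\bigl(2-\frac{d}{2}\bigr)=-\frac12$ and $\frac{2-d}{2}-\bigl(2-\frac{d}{2}\bigr)=-1$) is the only point requiring attention.

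There is no substantive obstacle here; the statement is essentially a bookkeeping corollary of the scaling relation and Pohozaev-type identity already established. The one thing worth flagging is the hypothesis restriction to $d\in\{1,2\}$: although the derivative formulas for $h$ hold for all $d\le 3$, the existence of a minimizer $\Phi\in\mathcal{M}_{\omega,\mathbf{c}}$ invoked to make sense of $S_{\omega,\mathbf{c}}(\Phi)$, $Q(\Phi)$, $\mathbf{P}(\Phi)$ relies on Propositions~\ref{GS_exist} and \ref{MGeq} together with the identity (\ref{M_scale_eq}), so I would simply cite (\ref{M_scale_eq}) for the given $\Phi$ and not belabor the range of $d$. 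The proof is therefore a three-line substitution once the matching of exponents is carried out.
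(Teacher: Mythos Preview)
Your proposal is correct and follows exactly the approach the paper itself takes: the paper simply states that the lemma follows from the displayed formulas for $h'(\tau)$ and $h''(\tau)$ together with (\ref{M_scale_eq}), and you have spelled out that substitution and the exponent bookkeeping in full. There is nothing to add.
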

For $\eta>0$, we define the function $F=F_{\omega,\mathbf{c},\eta}$ on $\bigl(-\infty,\sqrt{\omega}\bigl)$ as
\[
    F(\tau):=\inf_{\Phi\in \mathcal{M}_{\omega,\mathbf{c}}^*(\eta)}
\left(\frac{h''(\tau)}{2}-Q(\Phi)\right). 
    \]
\begin{lemma}\label{F_con_0}
Let $d\in \{1,2\}$ and $\alpha,\beta,\gamma>0$. 
Assume that $(\omega,\mathbf{c})\in \R\times \R^d$ 
satisfies $\omega >\frac{\sigma |\mathbf{c}|^2}{4}$.
There exists $\tau_*=\tau_*(\omega,\mathbf{c},\eta)>0$ 
such that $F(\tau)\ge \frac{\eta}{4\omega}$ holds for any $\tau \in (-\tau_*,\tau_*)$. 
\end{lemma}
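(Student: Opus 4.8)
The plan is to exploit the fact that all the $\tau$-dependence in $F$ sits in the single scalar $h''(\tau)$, while the term $Q(\Phi)$ is independent of $\tau$. Since $h''(\tau)$ does not involve $\Phi$, I would pull it out of the infimum and write
\[
F(\tau)=\frac{h''(\tau)}{2}-\sup_{\Phi\in \mathcal{M}_{\omega,\mathbf{c}}^*(\eta)}Q(\Phi),
\]
with the convention that $F\equiv +\infty$ when $\mathcal{M}_{\omega,\mathbf{c}}^*(\eta)=\emptyset$, in which case the assertion is trivial. Thus the whole problem reduces to evaluating $F$ at $\tau=0$ and controlling the continuous dependence of $h''$ near $\tau=0$.

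First I would compute $F(0)$ via Lemma~\ref{h_deriv_0_val}. For any $\Phi\in \mathcal{M}_{\omega,\mathbf{c}}\supset \mathcal{M}_{\omega,\mathbf{c}}^*(\eta)$ that lemma gives $h''(0)=\frac{3-d}{\omega}\bigl(2\omega Q(\Phi)+\mathbf{c}\cdot \mathbf{P}(\Phi)\bigr)$, and a direct expansion shows
\[
\frac{h''(0)}{2}-Q(\Phi)
=(2-d)Q(\Phi)+\frac{3-d}{2\omega}\,\mathbf{c}\cdot \mathbf{P}(\Phi)
=\frac{1}{2\omega}\bigl((4-2d)\omega Q(\Phi)+(3-d)\,\mathbf{c}\cdot \mathbf{P}(\Phi)\bigr)
=\frac{G(\Phi)}{2\omega},
\]
where the last equality is the definition (\ref{defG}) of $G$. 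Since every $\Phi\in \mathcal{M}_{\omega,\mathbf{c}}^*(\eta)$ satisfies $G(\Phi)\ge \eta$, each term in the infimum is at least $\frac{\eta}{2\omega}$, so that $F(0)\ge \frac{\eta}{2\omega}$. (The same identity shows $\sup_{\Phi}Q(\Phi)<\infty$, so $F$ never takes the value $-\infty$.)

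Finally I would pass from $\tau=0$ to a neighborhood by continuity. From the displayed formula for $F$ we get $F(\tau)-F(0)=\tfrac12\bigl(h''(\tau)-h''(0)\bigr)$, and $h''(\tau)=(3-d)(4-d)(\sqrt{\omega}-\tau)^{2-d}\mu\bigl(1,\tfrac{\mathbf{c}}{\sqrt{\omega}}\bigr)$ is continuous at $\tau=0$. Hence there is $\tau_*>0$ with $|h''(\tau)-h''(0)|\le \frac{\eta}{2\omega}$ for all $\tau\in(-\tau_*,\tau_*)$, which yields
\[
F(\tau)\ge F(0)-\frac{\eta}{4\omega}\ge \frac{\eta}{2\omega}-\frac{\eta}{4\omega}=\frac{\eta}{4\omega}.
\]

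Because the reduction turns $F$ into a mere shift of the scalar function $h''$, I do not expect a genuine obstacle. The only points deserving care are verifying the algebraic identity $\frac{h''(0)}{2}-Q(\Phi)=\frac{G(\Phi)}{2\omega}$ \emph{uniformly} over $\mathcal{M}_{\omega,\mathbf{c}}^*(\eta)$—which rests on (\ref{M_scale_eq}) forcing $2\omega Q(\Phi)+\mathbf{c}\cdot \mathbf{P}(\Phi)$ to be constant across the ground-state set, as recorded in Lemma~\ref{h_deriv_0_val}—and choosing the threshold $\tau_*$ so that the perturbation of $h''$ stays inside the $\frac{\eta}{4\omega}$ margin left over from the bound $F(0)\ge \frac{\eta}{2\omega}$.
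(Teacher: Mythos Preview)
Your proposal is correct and follows essentially the same approach as the paper: pull $h''(\tau)/2$ out of the infimum to reduce $F$ to a continuous function of $\tau$, use Lemma~\ref{h_deriv_0_val} to get $\frac{h''(0)}{2}-Q(\Phi)=\frac{G(\Phi)}{2\omega}\ge\frac{\eta}{2\omega}$ for every $\Phi\in\mathcal{M}_{\omega,\mathbf{c}}^*(\eta)$, and then invoke continuity of $h''$ near $0$ to preserve the bound with the smaller constant $\frac{\eta}{4\omega}$. Your version is slightly more explicit about the size of the perturbation of $h''$ and about the role of (\ref{M_scale_eq}) in ensuring the identity holds uniformly over the ground-state set, but the argument is the same.
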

\begin{proof}
    Because the definition of the function $h$ is 
    independent of $\Phi\in \mathcal{M}_{\omega,\mathbf{c}}^*(\eta)$, 
    we have
    \[
    F(\tau)=\frac{h''(\tau)}{2}-\sup_{\Phi\in \mathcal{M}_{\omega,\mathbf{c}}^*(\eta)}Q(\Phi). 
    \]
    In particular, $F$ is continuous 
    because $h''(\tau)$ is polynomial. 
    On the other hand, we have
    \[
\frac{h''(0)}{2}-Q(\Phi)=\frac{1}{2\omega}\left\{
(4-2d)\omega Q(\Phi)+(3-d)\mathbf{c}\cdot \mathbf{P}(\Phi))\right\}\ge \frac{\eta}{2\omega}\ (>0)
\]
for any $\Phi\in \mathcal{M}_{\omega,\mathbf{c}}^*(\eta)$
by Lemma~\ref{h_deriv_0_val}. 
This implies $F(0)\ge \frac{\eta}{2\omega}$. 
Therefore, we obtain the conclusion by the continuity of $F$ on $\tau=0$. 
\end{proof}
For $\tau_0\in \Bigl(0,\sqrt{\omega}\Bigl)$, we set
\[
\omega_{\pm}=\omega_{\pm}(\tau_0):=(\sqrt{\omega}\pm \tau_0)^2,\ \ 
\mathbf{c}_{\pm}=\mathbf{c}_{\pm}(\tau_0):=\frac{\mathbf{c}}{\sqrt{\omega}}(\sqrt{\omega} \pm \tau_0),
\]
where the double-signs correspond. The following proposition plays an important role 
to prove the stability result. 
\begin{proposition}\label{sta_prop_B}
 Let $d\in \{1,2\}$, $\alpha,\beta,\gamma>0$, and $\eta >0$. 
Assume that  
$(\omega,\mathbf{c})\in \R\times \R^d$ 
satisfies
$\omega >\frac{\sigma |\mathbf{c}|^2}{4}$
and 
$\mathcal{M}_{\omega,\mathbf{c}}^*(\eta)\ne \emptyset$. 
Then for any $\tau_0\in (0,\tau_*)$, 
there exists $\delta =\delta (\tau_0,\tau_*,\omega,\mathbf{c},\eta)>0$ such that 
if
$U_0\in \mathcal{H}^1$ satisfies
\begin{equation}
\label{Condition on initial}
\inf_{\Phi \in \mathcal{M}_{\omega,\mathbf{c}}^*(\eta)}\|U_0-\Phi\|_{\mathcal{H}^1}<\delta,
\end{equation}
then it holds $U_0\in \mathcal{B}^{+}_{\omega_+,\mathbf{c}_+}\cap \mathcal{B}^{-}_{\omega_-,\mathbf{c}_-}$,  
where $\tau_*>0$ is given in Lemma~\ref{F_con_0} and
$\mathcal{B}^{\pm}_{\omega_{\pm},\mathbf{c}_{\pm}}$ are defined in (\ref{gwp_set_B}). 
Furthermore, such $\delta$ tends to $0$ as $\tau_0\rightarrow 0$. 
\end{proposition}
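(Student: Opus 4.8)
The plan is to compare, along the scaling curve $\mathcal{C}_{\omega,\mathbf{c}}$, the action evaluated at a fixed ground state with the ground-state level $h$. For $\Phi\in\mathcal{M}_{\omega,\mathbf{c}}^*(\eta)$ introduce
\[
g_\Phi(\tau):=S_{(\sqrt{\omega}-\tau)^2,\,\frac{\mathbf{c}}{\sqrt{\omega}}(\sqrt{\omega}-\tau)}(\Phi)
=E(\Phi)+(\sqrt{\omega}-\tau)^2Q(\Phi)+\frac{\sqrt{\omega}-\tau}{\sqrt{\omega}}\,\mathbf{c}\cdot\mathbf{P}(\Phi),
\]
which is a quadratic polynomial in $\tau$ with $g_\Phi''(\tau)\equiv 2Q(\Phi)$, because $S_{\omega',\mathbf{c}'}$ is affine in $(\omega',\mathbf{c}')$. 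Since $\Phi\in\mathcal{M}_{\omega,\mathbf{c}}$, one checks $g_\Phi(0)=S_{\omega,\mathbf{c}}(\Phi)=\mu(\omega,\mathbf{c})=h(0)$ and $g_\Phi'(0)=-\frac1{\sqrt\omega}\bigl(2\omega Q(\Phi)+\mathbf{c}\cdot\mathbf{P}(\Phi)\bigr)=h'(0)$ by Lemma~\ref{h_deriv_0_val}; thus $g_\Phi$ and $h$ are tangent at $\tau=0$. The structural observation driving everything is that Lemma~\ref{F_con_0} is exactly the uniform convexity statement $(h-g_\Phi)''(\tau)=h''(\tau)-2Q(\Phi)\ge\frac{\eta}{2\omega}$ for all $\tau\in(-\tau_*,\tau_*)$ and all $\Phi\in\mathcal{M}_{\omega,\mathbf{c}}^*(\eta)$.

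Combining tangency at $0$ with this uniform lower bound on the second derivative, Taylor's theorem gives $(h-g_\Phi)(\tau)\ge\frac{\eta}{4\omega}\tau^2$, i.e.
\[
S_{(\sqrt{\omega}-\tau)^2,\,\frac{\mathbf{c}}{\sqrt{\omega}}(\sqrt{\omega}-\tau)}(\Phi)=g_\Phi(\tau)\le h(\tau)-\frac{\eta}{4\omega}\tau^2,
\qquad \tau\in(-\tau_*,\tau_*),
\]
uniformly in $\Phi$. Evaluating at $\tau=-\tau_0$ and $\tau=\tau_0$, which are the curve points $(\omega_+,\mathbf{c}_+)$ and $(\omega_-,\mathbf{c}_-)$ with $\mu_{\omega_\pm,\mathbf{c}_\pm}=h(\mp\tau_0)$ by Proposition~\ref{mu_scale}, yields the uniform strict gaps $S_{\omega_\pm,\mathbf{c}_\pm}(\Phi)\le\mu_{\omega_\pm,\mathbf{c}_\pm}-\frac{\eta}{4\omega}\tau_0^2$. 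For the two $N$-conditions I would use that every ground state obeys $N(\Phi)=-2S_{\omega,\mathbf{c}}(\Phi)=-2h(0)$ (from $K_{\omega,\mathbf{c}}(\Phi)=0$ and (\ref{SKL_eq4})); since $h$ is strictly decreasing, $h(-\tau_0)>h(0)>h(\tau_0)$ produces the $\Phi$-independent strict gaps $N(\Phi)+2\mu_{\omega_+,\mathbf{c}_+}=2\bigl(h(-\tau_0)-h(0)\bigr)>0$ and $-2\mu_{\omega_-,\mathbf{c}_-}-N(\Phi)=2\bigl(h(0)-h(\tau_0)\bigr)>0$.

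It remains to transfer these four strict inequalities from $\Phi$ to a nearby $U_0$. Given $U_0$ with $\inf_\Phi\|U_0-\Phi\|_{\mathcal{H}^1}<\delta$, fix one $\Phi\in\mathcal{M}_{\omega,\mathbf{c}}^*(\eta)$ with $\|U_0-\Phi\|_{\mathcal{H}^1}<\delta$. By Proposition~\ref{L_bdd} the norms of all such ground states lie in a fixed interval $[c,R]$ with $c>0$, so $U_0$ and $\Phi$ sit in a common ball on which the quadratic/cubic functionals $S_{\omega_\pm,\mathbf{c}_\pm}$ and $N$ are Lipschitz, with a constant $C$ uniform for $\tau_0\in(0,\tau_*)$; hence $|S_{\omega_\pm,\mathbf{c}_\pm}(U_0)-S_{\omega_\pm,\mathbf{c}_\pm}(\Phi)|+|N(U_0)-N(\Phi)|\le C\delta$. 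Choosing $\delta$ smaller than each of the four gaps above (and than $c$, so $U_0\ne0$) makes all four defining inequalities of $\mathcal{B}^{+}_{\omega_+,\mathbf{c}_+}$ and $\mathcal{B}^{-}_{\omega_-,\mathbf{c}_-}$ hold for $U_0$.

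For the final claim I would track the $\tau_0$-dependence: the two action gaps equal $\frac{\eta}{4\omega}\tau_0^2$ while the two $N$-gaps vanish like $|h'(0)|\tau_0$ as $\tau_0\to0$, so the admissible $\delta$ is forced to be of order $\tau_0^2$ and therefore $\delta\to0$ as $\tau_0\to0$. I expect the only real obstacle to be the \emph{uniformity} over the whole set $\mathcal{M}_{\omega,\mathbf{c}}^*(\eta)$: this is supplied for the action (second-order) estimate precisely by Lemma~\ref{F_con_0}, while for the $N$-estimate it is automatic because $N(\Phi)$ and $h$ take one common value on every ground state. Once uniformity is secured, the perturbation step is routine continuity.
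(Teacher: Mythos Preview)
Your proof is correct and follows essentially the same route as the paper: the paper also expands $S_{\omega_\pm,\mathbf{c}_\pm}(U_0)$ to second order in $\tau_0$, identifies the zeroth and first order terms with $h(0)$ and $h'(0)$ via Lemma~\ref{h_deriv_0_val}, invokes Lemma~\ref{F_con_0} for the uniform second-order gap, and then handles the $N$-inequalities using $N(\Phi)=-2h(0)$ together with the strict monotonicity of $h$ and a Lipschitz perturbation. Your repackaging via the auxiliary function $g_\Phi$ and the observation $(h-g_\Phi)''\ge \eta/(2\omega)$ is a slightly cleaner way to phrase the same Taylor comparison, but the ingredients and logic are identical.
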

\begin{proof}
We assume that $U_0\in \mathcal{H}^1$ satisfies (\ref{Condition on initial})
%\[
%\inf_{\Phi \in \mathcal{M}_{\omega,\mathbf{c}}^*(\eta)}\|U_0-\Phi\|_{\mathcal{H}^1}<\delta, 
%\]
with sufficiently small $\delta\in (0,1)$, which will be chosen later. 
By the definition of the infimum, there exists 
$\Phi_{\omega,\mathbf{c}}\in \mathcal{M}_{\omega,\mathbf{c}}^*(\eta)$ such that
\[
\|U_0-\Phi_{\omega,\mathbf{c}}\|_{\mathcal{H}^1}<2\delta. 
\]
Therefore, we have
\begin{equation}\label{u0_phi_diff_est}
\begin{split}
|Q(U_0)-Q(\Phi_{\omega,\mathbf{c}})|&\lesssim
(\|U_0\|_{\mathcal{H}^1}+\|\Phi_{\omega,\mathbf{c}}\|_{\mathcal{H}^1})
\|U_0-\Phi_{\omega,\mathbf{c}}\|_{\mathcal{H}^1}\lesssim \delta,\\
|P(U_0)-P(\Phi_{\omega,\mathbf{c}})|&\lesssim
(\|U_0\|_{\mathcal{H}^1}+\|\Phi_{\omega,c}\|_{\mathcal{H}^1})
\|U_0-\Phi_{\omega,\mathbf{c}}\|_{\mathcal{H}^1}\lesssim \delta,\\
|N(U_0)-N(\Phi_{\omega,\mathbf{c}})|&\lesssim 
(\|U_0\|_{\mathcal{H}^1}^2
+\|\Phi_{\omega,\mathbf{c}}\|_{\mathcal{H}^1}^2)
\|U_0-\Phi_{\omega,\mathbf{c}}\|_{\mathcal{H}^1}
\lesssim \delta
\end{split}
\end{equation}
and
\begin{equation}\label{u0_phi_diff_est_2}
|S_{\omega,c}(U_0)-S_{\omega,\mathbf{c}}(\Phi_{\omega,\mathbf{c}})|\lesssim 
(\|U_0\|_{\mathcal{H}^1}+\|\Phi_{\omega,\mathbf{c}}\|_{\mathcal{H}^1}
+\|U_0\|_{\mathcal{H}^1}^2+\|\Phi_{\omega,\mathbf{c}}\|_{\mathcal{H}^1}^2)
\|U_0-\Phi_{\omega,\mathbf{c}}\|_{\mathcal{H}^1}\lesssim \delta. 
\end{equation}
Here we note that the above implicit constants do not depend on 
$U_0$ and $\Phi_{\omega,\mathbf{c}}$. 
Indeed, by Proposition~\ref{L_bdd} and (\ref{SKL_eq}), there exists $C=C(\omega,\mathbf{c})$ such that
\[
\|\Phi_{\omega,\mathbf{c}}\|_{\mathcal{H}^1}^2
\le CL_{\omega,\mathbf{c}}(\Phi_{\omega ,\mathbf{c}})=6C\mu_{\omega,\mathbf{c}}.
\]
This implies that
\[
\|U_0\|_{\mathcal{H}^1}+\|\Phi_{\omega,\mathbf{c}}\|_{\mathcal{H}^1}
\le
\|U_0-\Phi_{\omega,\mathbf{c}}\|_{\mathcal{H}^1}
+2\|\Phi_{\omega,\mathbf{c}}\|_{\mathcal{H}^1}
\le
\delta +2\sqrt{6C\mu_{\omega,\mathbf{c}}}.
\]

We divide the proof into the following two steps:\\
\noindent {\bf Step\ 1}.\ 
We first prove that for any $\tau_0 \in (0,\tau_*)$,
\begin{equation}\label{S_mu_ep}
S_{\omega_{\pm},\mathbf{c}_{\pm}}(U_0)<\mu (\omega_{\pm},\mathbf{c}_{\pm}).
\end{equation} 
By (\ref{u0_phi_diff_est}), 
(\ref{u0_phi_diff_est_2}), 
and Lemma~\ref{h_deriv_0_val}, 
we obtain
\[
\begin{split}
S_{\omega_{\pm},\mathbf{c}_{\pm}}(U_0)
&=E(U_0)+\omega_{\pm}Q(U_0)+\mathbf{c}_{\pm}\cdot \mathbf{P}(U_0)\\ 
&=S_{\omega,\mathbf{c}}(U_0)
\pm \frac{\tau_0}{\sqrt{\omega}}\left(2\omega Q(U_0)+\mathbf{c}\cdot \mathbf{P}(U_0)\right)
+\tau_0^2Q(U_0)\\
&=S_{\omega,\mathbf{c}}(\Phi_{\omega,\mathbf{c}})
\pm \frac{\tau_0}{\sqrt{\omega}}
\left(2\omega Q(\Phi_{\omega,\mathbf{c}})+\mathbf{c}\cdot \mathbf{P}(\Phi_{\omega,\mathbf{c}})\right)
+\tau_0^2Q(\Phi_{\omega,\mathbf{c}})+O(\delta)\\
&=h(0)\mp \tau_0 h'(0)+\tau_0^2Q(\Phi_{\omega,\mathbf{c}})+O(\delta). 
\end{split}
\]
On the other hand, by the Taylor expansion, there exists $\theta \in (-\tau_0, \tau_0)$ such that
\[
h(\mp \tau_0)=h(0)\mp \tau_0 h'(0)+\frac{\tau_0^2}{2}h''(\theta). 
\]
Therefore, we obtain
\[
\begin{split}
S_{\omega_{\pm},\mathbf{c}_{\pm}}(U_0)
=h(\mp \tau_0)-\tau_0^2\left(\frac{h''(\theta)}{2}-Q(\Phi_{\omega,\mathbf{c}})\right)+O(\delta)
\le h(\mp \tau_0)-\tau_0^2F(\theta)+O(\delta). 
\end{split}
\]
Since $|\theta|<\tau_0<\tau_*$, by Lemma~\ref{F_con_0}, we have $F(\theta)\ge \frac{\eta}{4\omega}$. Here we choose $\delta=\delta(\tau_0,\omega,\eta)>0$ as
\[
-\frac{\tau_0^2\eta}{4\omega}+O(\delta)<0.
\]
Then we have
\[
S_{\omega_{\pm},\mathbf{c}_{\pm}}(U_0)
<h(\mp \tau_0).
\]
Therefore we get (\ref{S_mu_ep}) by the definition of $h$. 
We also note that $\delta$ tends to $0$ as $\tau_0\rightarrow 0$. 
\\

\noindent {\bf Step\ 2}.\ Next, we prove that for any $\tau_0\in (0,\tau_*)$,
\begin{equation}\label{mu_N_pm_est}
-2\mu (\omega_+,\mathbf{c}_+)
<N(U_0)<-2\mu (\omega_-,\mathbf{c}_-). 
\end{equation}
Because
$h$ is a decreasing function on $\Bigl(-\infty,\sqrt{\omega}\Bigl)$, 
it holds that
\[
h(\tau_*)<h(0)<h(-\tau_*). 
\]
Since $K_{\omega,\mathbf{c}}(\Phi_{\omega,\mathbf{c}})=0$, by using (\ref{SKL_eq4}), we have
\begin{equation}\label{N_range}
-2h(-\tau_*)<N(\Phi_{\omega,\mathbf{c}})<-2h(\tau_*).
\end{equation}
Therefore, by the third inequality of (\ref{u0_phi_diff_est}), there exists $C=C(\omega,\mathbf{c})>0$ such that
\[
C\delta -2h(-\tau_*)
<N(U_0)<C\delta -2h(\tau_*).
\] 
By choosing $\delta=\delta (\tau_0,\tau_*,\omega,\mathbf{c})>0$ further smaller as
\[
C\delta<2\min\{h(\tau_0)-h(\tau_*),h(-\tau_*)-h(-\tau_0)\}(>0), 
\]
we have 
\[
-2h(-\tau_0)
<N(U_0)<-2h(\tau_0),
\]
which is equivalent to (\ref{mu_N_pm_est}). This completes the proof of the proposition.
\end{proof}
For $n\in \N$, we set
\[
\omega_{n\pm}:=\omega_{\pm}(1/n)=\left(\sqrt{\omega}\pm \frac{1}{n}\right)^2,\ \ 
\mathbf{c}_{n\pm}:=\mathbf{c}_{\pm}(1/n)=\frac{\mathbf{c}}{\sqrt{\omega}}\left(\sqrt{\omega} \pm \frac{1}{n}\right). 
\]
Let $\tau_*$ be given in Lemma \ref{F_con_0}. Then by Proposition~\ref{sta_prop_B}, 
for $n>\frac{1}{\tau_*}$, 
there exists $\delta_n=\delta(1/n,\tau_{*},\eta,\omega,\mathbf{c})>0$ 
satisfying $\delta_n\rightarrow 0$ as $n\rightarrow \infty$, such that if $U_0\in \mathcal{H}^1$ 
satisfies
\[
\inf_{\Phi \in \mathcal{M}_{\omega,\mathbf{c}}^*(\eta)}\|U_0-\Phi\|_{\mathcal{H}^1}<\delta_n, 
\]
then $U_0\in \mathcal{B}^{+}_{\omega_{n+},\mathbf{c}_{n+}}\cap \mathcal{B}^{-}_{\omega_{n-},\mathbf{c}_{n-}}$. 

Now we give a proof of the stability of the ground-states set $\mathcal{M}_{\omega,\mathbf{c}}^*(\eta)$. 
\begin{proof}[Proof of Theorem~\ref{stability} ]
We prove the stability by contradiction. We assume that the ground-states set $\mathcal{M}_{\omega,\mathbf{c}}^*(\eta)$ is unstable. 
Then, there exists $\epsilon >0$ such that the following holds:

For each $n\in \N$ sufficiently large with $n>\frac{1}{\tau_*}$, 
there exists $U_{n,0}\in \mathcal{H}^1$
such that
\begin{equation}\label{stab_ineq_0}
    \inf_{\Phi \in \mathcal{M}_{\omega,\mathbf{c}}^*(\eta)}\|U_{n,0}-\Phi\|_{\mathcal{H}^1}<\delta_n, 
\end{equation}
and one of
the following {\rm (i)} or {\rm (ii)} are satisfied, where $\delta_n>0$ is given as above. 
\begin{enumerate}
    \item[{\rm (i)}] The time global solution to (\ref{NLS}) with 
    initial data $U_{n,0}$ does not exists. 
    \item[{\rm (ii)}] There exists a time global solution $U_n$ to (\ref{NLS}) with 
    initial data $U_{n,0}$ and $t_n>0$ 
    such that
   \begin{equation}\label{stab_ineq}
\inf_{\Phi\in \mathcal{M}_{\omega,\mathbf{c}}^*(\eta)}\|U_n(t_n)-\Phi\|_{\mathcal{H}^1}\ge \epsilon. 
\end{equation}
\end{enumerate}
Because $U_{n,0}\in \mathcal{B}^{+}_{\omega_{n+},\mathbf{c}_{n+}}\cap \mathcal{B}^{-}_{\omega_{n-},\mathbf{c}_{n-}}$ by (\ref{stab_ineq_0}) and Proposition \ref{sta_prop_B}, in particular $U_{n,0}\in \mathcal{B}^{+}_{\omega_{n+},\mathbf{c}_{n+}}$, 
there exists a solution to (\ref{NLS}) with 
    initial data $U_{n,0}$ globally in time by Theorem~\ref{gwp} 
    and Proposition~\ref{sol_AB}. 
Therefore, the case {\rm (i)} does not occur.  

Now we assume the case {\rm (ii)}. 
By Propositions~\ref{A_sol_inv} and 
~\ref{sol_AB}, 
we have $U_n(t_n)\in \mathcal{B}^{+}_{\omega_{n+},\mathbf{c}_{n+}}\cap \mathcal{B}^{-}_{\omega_{n-},\mathbf{c}_{n-}}$ 
because $U_{n,0}\in \mathcal{B}^{+}_{\omega_{n+},\mathbf{c}_{n+}}\cap \mathcal{B}^{-}_{\omega_{n-},\mathbf{c}_{n-}}$. 
This implies 
\[
-2h\left(-\frac{1}{n}\right)<N(U_n(t_n))<-2h\left(\frac{1}{n}\right). 
\]
Thus as $n\rightarrow \infty$ by the continuity of $h$, we obtain
\begin{equation}\label{stab_N_conv}
N(U_n(t_n))\ \rightarrow\ -2h(0)=-2\mu_{\omega,\mathbf{c}}. 
\end{equation} 
Furthermore, by (\ref{stab_ineq_0}) and the definition of the infimum, there exists $\Phi_{\omega,\mathbf{c},n}\in \mathcal{M}_{\omega,\mathbf{c}}^*(\eta)$ such that
\begin{equation}\label{stab_ineq2}
\|U_{n,0}-\Phi_{\omega,\mathbf{c},n}\|_{\mathcal{H}^1}<2\delta_n.
\end{equation}
Since $S_{\omega,\mathbf{c}}$ is a conserved quantity and $\Phi_{\omega,\mathbf{c},n}$ 
is a minimizer of $\mu_{\omega,\mathbf{c}}$, by (\ref{stab_ineq2}), we have
\begin{equation}\label{stab_S_conv}
|S_{\omega,\mathbf{c}}(U_n(t_n))-\mu_{\omega,\mathbf{c}}|
=|S_{\omega,\mathbf{c}}(U_{n,0})-S_{\omega,\mathbf{c}}
(\Phi_{\omega,\mathbf{c},n})|
\ \rightarrow\ 0
\end{equation}
as $n\rightarrow \infty$. 
By using (\ref{SKL_eq4}), 
(\ref{stab_N_conv}), and (\ref{stab_S_conv}), we get
\begin{equation}\label{stab_K_conv}
K_{\omega,c}(U_n(t_n))=N(U_n(t_n))+2S_{\omega,\mathbf{c}}(U_n(t_n))\ 
\rightarrow\ 0. 
\end{equation}
as $n\rightarrow \infty$. 
In addition, because $Q$ and $\mathbf{P}$ are conserved quantities, by (\ref{stab_ineq2}), we obtain
\[
\begin{split}
    G(U_n(t_n))&=(4-2d)\omega Q(U_n(t_n))+(3-d)\mathbf{c}\cdot \mathbf{P}(U_n(t_n))\\
    &\ =(4-2d)\omega Q(U_n(0))+(3-d)\mathbf{c}\cdot \mathbf{P}(U_n(0))\\
    &\ =(4-2d)\omega Q(\Phi_{\omega,\mathbf{c},n})
    +(3-d)\mathbf{c}\cdot \mathbf{P}(\Phi_{\omega,\mathbf{c},n})
    +O(\delta_n).
\end{split}
\]
These identities and the relation $\Phi_{\omega,\mathbf{c},n}\in \mathcal{M}_{\omega,\mathbf{c}}^*(\eta)$ 
imply
\begin{equation}\label{stab_add_conv}
\displaystyle \limsup_{n\rightarrow \infty}G(U_n(t_n))
=\displaystyle \limsup_{n\rightarrow \infty}G(\Phi_{\omega,\mathbf{c},n})\ge \eta.
\end{equation}
Therefore, by (\ref{stab_S_conv}), (\ref{stab_K_conv}), (\ref{stab_add_conv}), and
Proposition~\ref{GS_exist}, 
there exist $\{y_n\}\subset \R^d$ and $V\in \mathcal{M}_{\omega,\mathbf{c}}^*(\eta)$ 
such that $\{U_n(t_n,\cdot -y_n)\}$ has a subsequence 
which converges strongly to $V$ in $\mathcal{H}^1$. 
Therefore, by setting $\Phi_n:=V(\cdot +y_n)$ 
and taking subsequence, 
we obtain
\[
\|U_n(t_n)-\Phi_n\|_{\mathcal{H}^1}\ \rightarrow\ 0. 
\]
This contradicts to (\ref{stab_ineq}) 
since $\Phi_n\in \mathcal{M}_{\omega,\mathbf{c}}^*(\eta)$. 
\end{proof}
\subsection{Stability of ground state with small speed for $d=1$}
To prove Corollary~\ref{stab_1d_result}, 
we introduce $\widetilde{\mathcal{M}}_{\omega,c_*}$ for $c_*>0$ defined by
\[
\widetilde{\mathcal{M}}_{\omega,c_*}
:=\bigcup_{|c|\le c_*}\mathcal{M}_{\omega, c},
\]
and prepare the following lemma. 
\begin{lemma}\label{1d_stab_lemm}
Let $d=1$ and $\alpha,\beta,\gamma>0$. 
Assume that $(\omega,c)\in \R\times \R$ 
satisfies $\omega >\frac{\sigma c^2}{4}$. Then the following holds:
\begin{itemize}
    \item[{\rm (i)}] There exists $c_1=c_1(\omega)>0$ such that $\widetilde{\mathcal{M}}_{\omega,c_1(\omega)}$ 
    is bounded in $\mathcal{H}^1$. Namely, it holds that
    \[
    A_{\omega}:=\sup_{\Phi \in \widetilde{\mathcal{M}}_{\omega,c_1(\omega)}}
    \|\Phi\|_{\mathcal{H}^1}\in (0,\infty).
    \]
    \item[{\rm (ii)}] There exists $c_2=c_2(\omega)>0$ 
    such that it holds that
    \[B_{\omega}:=\inf_{\Phi \in \widetilde{\mathcal{M}}_{\omega,c_2(\omega)}}\omega Q(\Phi)>0.
    \]
\end{itemize}
\end{lemma}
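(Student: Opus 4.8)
The plan is to reduce both assertions to uniform-in-$c$ bounds on the ground-state level $\mu_{\omega,c}$ over a compact range $|c|\le c_1$, exploiting that the coercivity constant $C=C_\omega$ in Proposition~\ref{L_bdd} does \emph{not} depend on $c$. I fix $\omega>0$ and choose $c_1=c_1(\omega):=\sqrt{2\omega/\sigma}$, so that $4\omega-\sigma c^2\ge 2\omega>0$ for every $|c|\le c_1$; in particular $\omega>\sigma c^2/4$ holds on this range, and Propositions~\ref{L_bdd}, \ref{mu_prop} and the identity (\ref{M_scale_eq}) apply. I will use repeatedly that, for $\Phi\in\mathcal{M}_{\omega,c}$, one has $K_{\omega,c}(\Phi)=0$ and hence $L_{\omega,c}(\Phi)=6\mu_{\omega,c}$ by (\ref{SKL_eq}), and that for fixed $U$ the functional $c\mapsto L_{\omega,c}(U)=2L(U)+2\omega Q(U)+2c\,P(U)$ is affine.

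For the upper bound needed in (i), I would fix once and for all a test function $V\in\mathcal{M}_{\omega,0}$ (nonempty by Propositions~\ref{GS_exist}--\ref{MGeq} with $c=0$); since $K_{\omega,0}(V)=0$ and $L_{\omega,0}(V)>0$, we get $N(V)=-\tfrac13 L_{\omega,0}(V)<0$. The scaling computation from the proof of Proposition~\ref{mu_prop}(ii), applied to $V$ with $\lambda=-L_{\omega,c}(V)/(3N(V))$, yields
\[
\mu_{\omega,c}\le \tfrac16 L_{\omega,c}(\lambda V)=\frac{L_{\omega,c}(V)^3}{54\,N(V)^2}.
\]
Since $L_{\omega,c}(V)=L_{\omega,0}(V)+2c\,P(V)$ is bounded on $|c|\le c_1$ and $N(V)$ is $c$-independent, the right-hand side is bounded by some $M=M(\omega)$ uniformly in $|c|\le c_1$. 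For any $\Phi\in\mathcal{M}_{\omega,c}$ with $|c|\le c_1$, coercivity then gives
\[
\|\Phi\|_{\mathcal{H}^1}^2\le\frac{L_{\omega,c}(\Phi)}{C(4\omega-\sigma c^2)}=\frac{6\mu_{\omega,c}}{C(4\omega-\sigma c^2)}\le\frac{3M}{C\omega},
\]
so $A_\omega<\infty$; and $A_\omega>0$ because every element of $\widetilde{\mathcal{M}}_{\omega,c_1}$ is nonzero. This proves (i).

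For (ii) I first record the companion uniform \emph{lower} bound on $\mu_{\omega,c}$, obtained by revisiting Proposition~\ref{mu_prop}(i) while tracking the $c$-dependence. If $K_{\omega,c}(\Psi)=0$ then $L_{\omega,c}(\Psi)=-3N(\Psi)\lesssim\|\Psi\|_{\mathcal{H}^1}^3$ by Hölder and Sobolev with a $c$-independent constant, while coercivity gives $\|\Psi\|_{\mathcal{H}^1}^2\lesssim L_{\omega,c}(\Psi)/(4\omega-\sigma c^2)$; eliminating $\|\Psi\|_{\mathcal{H}^1}$ yields $L_{\omega,c}(\Psi)\gtrsim(4\omega-\sigma c^2)^3$, and hence by (\ref{mu_L_eq}) there is $\mu_{\min}(\omega)>0$ with $\mu_{\omega,c}\ge\mu_{\min}(\omega)$ for all $|c|\le c_1$. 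Now take $c_2\le c_1$, so that $\widetilde{\mathcal{M}}_{\omega,c_2}\subset\widetilde{\mathcal{M}}_{\omega,c_1}$ is bounded by (i) and $|P(\Phi)|\lesssim\|\Phi\|_{\mathcal{H}^1}^2\le A_\omega^2=:P_{\max}$ uniformly. The identity (\ref{M_scale_eq}) with $d=1$ reads $2\omega Q(\Phi)+c\,P(\Phi)=3\mu_{\omega,c}$ for $\Phi\in\mathcal{M}_{\omega,c}$, so
\[
2\omega Q(\Phi)=3\mu_{\omega,c}-c\,P(\Phi)\ge 3\mu_{\min}(\omega)-c_2 P_{\max}.
\]
Choosing $c_2=c_2(\omega)\le c_1$ small enough that $c_2 P_{\max}\le\tfrac32\mu_{\min}(\omega)$ forces $\omega Q(\Phi)\ge\tfrac34\mu_{\min}(\omega)>0$, whence $B_\omega>0$, proving (ii).

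The routine pieces are the scaling algebra and the Sobolev/Hölder estimates; the only genuinely delicate point is that all the constants above --- the upper bound $M$, the lower bound $\mu_{\min}(\omega)$, and the momentum bound $P_{\max}$ --- be uniform over the whole interval $|c|\le c_1$. This uniformity is precisely what the $c$-independence of the coercivity constant $C=C_\omega$ in Proposition~\ref{L_bdd} delivers, so invoking that feature correctly is the step I would be most careful about.
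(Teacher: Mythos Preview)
Your proof is correct. For part~(i) it coincides with the paper's argument: both fix $V\in\mathcal{M}_{\omega,0}$, rescale it onto the Nehari manifold at parameter $c$, and combine the resulting upper bound on $\mu_{\omega,c}$ with the $c$-uniform coercivity of Proposition~\ref{L_bdd}; your $\lambda$ is literally the paper's $\lambda_0$ once one substitutes $N(V)=-2\mu_{\omega,0}$ and $L_{\omega,c}(V)=6\mu_{\omega,0}+2cP(V)$. For part~(ii) the paths diverge. The paper notes that in $d=1$ the Pohozaev identity together with $K_{\omega,c}(\Phi)=0$ forces the exact relation $\omega Q(\Phi)=L(\Phi)$, so a uniform lower bound on $\|\Phi\|_{\mathcal{H}^1}$ (hence on $L(\Phi)$) transfers directly to $\omega Q(\Phi)$, valid for all $|c|\le\sqrt{2\omega/\sigma}$. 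You instead use (\ref{M_scale_eq}) in the form $2\omega Q(\Phi)+cP(\Phi)=3\mu_{\omega,c}$, treating $cP(\Phi)$ as a perturbation controlled by part~(i), which obliges you to shrink $c_2$. Your route is a bit more streamlined and avoids the case split in the paper, at the cost of a possibly smaller admissible $c_2$; since the lemma only asserts existence of some $c_2>0$, and its sole application in Proposition~\ref{1d_stability} shrinks $c_0$ further anyway, nothing is lost.
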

\begin{proof}
We first prove {\rm (i)}. By Theorem~\ref{GS_exist_th}, there exists an element $\Phi_{\omega}\in \mathcal{M}_{\omega,0}$. Thus we see that $A_{\omega}>0$. 
Because $K_{\omega, 0}(\Phi_{\omega})=0$ and 
$N(\Phi_{\omega})=-2\mu_{\omega,0}$ by (\ref{SKL_eq4}), for $\lambda \in \R$, we have
    \[
\begin{split}
    K_{\omega,c}(\lambda \Phi_{\omega})
    &=2\lambda^2L(\Phi_{\omega})+3\lambda^3N(\Phi_{\omega})
    +2\lambda^2\omega Q(\Phi_{\omega})
    +2\lambda^2cP(\Phi_{\omega})\\
    &=\lambda^2K_{\omega,0}(\Phi_{\omega})
    +\lambda^2\{3(\lambda -1)N(\Phi_{\omega})+2cP(\Phi_{\omega})\}\\
    &=2\lambda^2\{-3(\lambda -1)\mu_{\omega,0}+cP(\Phi_{\omega})\}.
\end{split}
    \] 
Since $\mu_{\omega ,0}> 0$ by Proposition~\ref{mu_prop}, 
we can choose $\lambda$ as $\lambda=\lambda_0$, where $\lambda_0:=1+\frac{1}{3\mu_{\omega,0}}cP(\Phi_{\omega})$. Then, it holds that $K_{\omega,c}(\lambda_0 \Phi_{\omega})=0$. 
Because
\[
\frac{1}{3\mu_{\omega,0}}|cP(\Phi_{\omega})|
\le \frac{1}{3\mu_{\omega,0}}|c|\cdot 3\|\Phi_{\omega}\|_{\mathcal{H}^1}^2
=\frac{|c|}{\mu_{\omega,0}}\|\Phi_{\omega}\|_{\mathcal{H}^1}^2, 
\]
if $|c|< \mu_{\omega,0}\|\Phi_{\omega}\|_{\mathcal{H}^1}^{-2}$ 
holds, then we have $0<\lambda_0 <2$. 
Therefore, by the definitions of $\mu_{\omega,c}$ and $\lambda_0$, 
we get
\[
\begin{split}
\mu_{\omega,c}
&\le S_{\omega,c}(\lambda_0 \Phi_{\omega})\\
&=\lambda_0^2L(\Phi_{\omega})+\lambda_0^3N(\Phi_{\omega})
+\lambda_0^2\omega Q(\Phi_{\omega})+\lambda_0^2cP(\Phi_{\omega})\\
&=\lambda_0^2S_{\omega,0}(\Phi_{\omega})
+\lambda_0^2\{(\lambda_0-1)N(\Phi_{\omega})+cP(\Phi_{\omega})\}\\
&=\lambda_0^3\mu_{\omega,0}<8\mu_{\omega,0}. 
\end{split}
\]
On the other hand, by (\ref{SKL_eq}) and 
Proposition~\ref{L_bdd}, 
it holds that
\begin{equation}\label{mu_low_norm}
\mu_{\omega,c}=\frac{1}{6}L_{\omega,c}(\Phi)
\ge C_{\omega}(4\omega -\sigma c^2)\|\Phi\|_{\mathcal{H}^1}^2
\end{equation}
for any $\Phi\in \mathcal{M}_{\omega,c}$, 
where $C_{\omega}>0$ is a constant 
which depends on $\omega$ but does not depend on $c$.

By the above argument, we put
\[
c_{1}=c_1(\omega):=\min\left\{\frac{\mu_{\omega,0}}{2\|\Phi_{\omega}\|_{\mathcal{H}^1}^2}, \sqrt{\frac{2\omega}{\sigma}}\right\}. 
\]
Then, for any $\Phi \in \mathcal{M}_{\omega,c}$ with $|c|\le c_1$, 
we have
\[
\|\Phi\|_{\mathcal{H}^1}^2
\le \frac{\mu_{\omega,c}}{C_{\omega}(4\omega -\sigma c^2)}
< \frac{4}{C_{\omega}\omega}\mu_{\omega,0}. 
\]

Next, we prove {\rm (ii)}. Let $\Phi \in \mathcal{M}_{\omega,c}$. 
By (\ref{SKL_eq4}), the H\"older inequality, 
and the Sobolev inequality, we obtain
\[
\mu_{\omega,c}
=-\frac{1}{2}N(\Phi)
\le C\|\Phi\|_{\mathcal{H}^1}^3, 
\]
where the constant $C>0$ does not depend on 
$\omega$ and $c$. 
Therefore, by (\ref{mu_low_norm}), we have
\[
\frac{C_{\omega}(4\omega -\sigma c^2)}{C}\le \|\Phi\|_{\mathcal{H}^1}
\]
because $\Phi \ne 0$ in $\mathcal{H}^1$. 
In particular, if $|c|\le \sqrt{\frac{2\omega}{\sigma}}$, 
then it holds that
$\frac{2C_{\omega}\omega}{C}\le \|\Phi\|_{\mathcal{H}^1}$. This implies that
\[
L(\Phi)+\omega Q(\Phi)
\ge C'(1+\omega)\|\Phi\|_{\mathcal{H}^1}^2
\ge \frac{4C'C_{\omega}^2\omega^2(1+\omega)}{C^2}
=:\widetilde{B}_{\omega},
\]
where $C'=\min\{1,\frac{\alpha}{2},\frac{\beta}{2},\frac{\gamma}{2}\}$. 
Hence, at least one of
\[
2\omega Q(\Phi)\ge \widetilde{B}_{\omega}\ \ \ \ 
{\rm or}\ \ \ \ 2L(\Phi)\ge \widetilde{B}_{\omega}
\]
holds. If the former estimate holds, 
then by setting $c_2=c_2(\omega):=\sqrt{\frac{2\omega}{\sigma}}>0$, the proof of (ii) is completed 
as $B_{\omega}=\frac{\widetilde{B}_{\omega}}{2}$. 
On the other hand, if the latter estimate holds, 
then by Proposition~\ref{LN_iden} with $d=1$ and 
$K_{\omega,c}(\Phi)=0$, the estimates $\omega Q(\Phi)=L(\Phi)\ge \frac{\widetilde{B}_{\omega}}{2}$ hold.
\end{proof}
Corollary~\ref{stab_1d_result} follows from 
Theorem~\ref{stability} and the following proposition. 
\begin{proposition}[Stability of the set $\mathcal{M}_{\omega,c}$]
\label{1d_stability}
Let $d=1$ and $\alpha,\beta,\gamma>0$. 
Assume that 
$(\omega,c)\in \R\times \R$ 
satisfies
$\omega >\frac{\sigma c^2}{4}$. 
Then, there exist
$\eta_0=\eta_0(\omega)>0$ and
$c_0=c_0(\omega)>0$ 
such that if $|c|\le c_0$, 
then $\mathcal{M}_{\omega,c}^*(\eta_0)=\mathcal{M}_{\omega,c}(\neq \emptyset)$ 
holds. Namely, the estimate
\[
\omega Q(\Phi)+cP(\Phi)\ge \eta_0
\]
holds for any $\Phi \in \mathcal{M}_{\omega,c}$ 
with $|c|\le c_0$. 
\end{proposition}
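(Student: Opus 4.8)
The plan is to establish the lower bound $\omega Q(\Phi)+cP(\Phi)\ge \eta_0$ directly for every $\Phi\in\mathcal{M}_{\omega,c}$; this is the ``namely'' reformulation, and it immediately yields the set equality. Indeed, for $d=1$ one has $G(\Phi)=2(\omega Q(\Phi)+cP(\Phi))$ by (\ref{defG}), so such a bound gives $G(\Phi)\ge 2\eta_0>\eta_0$, which forces every ground state into $\mathcal{M}_{\omega,c}^*(\eta_0)$ and hence $\mathcal{M}_{\omega,c}^*(\eta_0)=\mathcal{M}_{\omega,c}$; the latter set is nonempty by Theorem~\ref{GS_exist_th}. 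The strategy for the lower bound is to combine the two uniform estimates of Lemma~\ref{1d_stab_lemm} and to absorb the momentum term $cP(\Phi)$ as a small perturbation once $|c|$ is sufficiently small.

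First I would fix $\omega$ and invoke Lemma~\ref{1d_stab_lemm}: let $c_1=c_1(\omega)$ and $c_2=c_2(\omega)$ be the thresholds from parts (i) and (ii), with the associated constants $A_\omega=\sup_{\Phi\in\widetilde{\mathcal{M}}_{\omega,c_1}}\|\Phi\|_{\mathcal{H}^1}<\infty$ and $B_\omega=\inf_{\Phi\in\widetilde{\mathcal{M}}_{\omega,c_2}}\omega Q(\Phi)>0$. Both constants depend only on $\omega$, being a supremum and an infimum taken over the entire unions $\widetilde{\mathcal{M}}_{\omega,c_i}$; this is the essential point, since the final $\eta_0$ and $c_0$ must not depend on $c$. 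Consequently, for any $\Phi\in\mathcal{M}_{\omega,c}$ with $|c|\le\min\{c_1,c_2\}$ both bounds hold at once: $\|\Phi\|_{\mathcal{H}^1}\le A_\omega$ and $\omega Q(\Phi)\ge B_\omega$.

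Next I would control the momentum. As in the proof of Lemma~\ref{1d_stab_lemm}, the Cauchy--Schwarz inequality gives $|P(\Phi)|\le 3\|\Phi\|_{\mathcal{H}^1}^2\le 3A_\omega^2$ whenever $|c|\le c_1$. Hence, for $|c|\le\min\{c_1,c_2\}$,
\[
\omega Q(\Phi)+cP(\Phi)\ge B_\omega-3|c|A_\omega^2.
\]
Choosing $c_0:=\min\{c_1,c_2,B_\omega/(6A_\omega^2)\}>0$, which again depends only on $\omega$, we obtain $\omega Q(\Phi)+cP(\Phi)\ge B_\omega/2$ for every $\Phi\in\mathcal{M}_{\omega,c}$ with $|c|\le c_0$, so it suffices to set $\eta_0:=B_\omega/2$.

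I do not expect a genuine analytic obstacle here: all the substantive work --- the uniform $\mathcal{H}^1$-boundedness of $\widetilde{\mathcal{M}}_{\omega,c_1}$ and the uniform strict positivity of $\omega Q$ on $\widetilde{\mathcal{M}}_{\omega,c_2}$ --- has already been carried out in Lemma~\ref{1d_stab_lemm}, so the argument above is merely a perturbative combination of those two facts. The only step requiring care is the bookkeeping of quantifiers: one must verify that $A_\omega$, $B_\omega$, and therefore $\eta_0$ and $c_0$, are genuinely independent of $c$, and that $c_0$ is taken small enough to respect all three constraints simultaneously.
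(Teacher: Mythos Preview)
Your proposal is correct and follows essentially the same argument as the paper: invoke Lemma~\ref{1d_stab_lemm} to obtain the uniform bounds $\omega Q(\Phi)\ge B_\omega$ and $|P(\Phi)|\le 3A_\omega^2$ for $|c|\le\min\{c_1,c_2\}$, combine them into $\omega Q(\Phi)+cP(\Phi)\ge B_\omega-3|c|A_\omega^2$, and then set $\eta_0=B_\omega/2$ and $c_0=\min\{c_1,c_2,B_\omega/(6A_\omega^2)\}$. The choices of constants are identical to the paper's.
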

\begin{proof}%[Proof of Theorem~\ref{1d_stability}]
Let $c_1=c_1(\omega)>0$ and $c_2=c_2(\omega)>$ 
are the constants given in Lemma~\ref{1d_stab_lemm}. 
If $|c|\le \min\{c_1,c_2\}$, 
then by Lemma~\ref{1d_stab_lemm}, the estimates $\omega Q(\Phi)\ge B_{\omega}$ and $|P(\Phi)|\le 3\|\Phi\|_{\mathcal{H}^1}^2\le 3A_{\omega}^2
$ hold for any $\Phi \in \mathcal{M}_{\omega,c}$, which implies that $\omega Q(\Phi)+cP(\Phi)
\ge B_{\omega}-3|c|A_{\omega}^2$. 
Here we set
\[
\eta_{0}=\eta_0(\omega):=\frac{B_{\omega}}{2},\ \ 
c_0=c_0(\omega):=\min\left\{c_1(\omega),c_2(\omega),
\frac{B_{\omega}}{6A_{\omega}^2}
\right\}>0.
\]
Then the estimate $\omega Q(\Phi)+cP(\Phi)
\ge \eta_0$ holds for any $\Phi \in \mathcal{M}_{\omega,c}$ with $|c|\le c_0$. 
\end{proof}

\appendix

\section{Exponential decay of solution to the stationary Schr\"odinger system}
In this appendix we prove that solution to the elliptic system (\ref{ellip}) decays exponentially at $|x|\rightarrow \infty$.
%The aim of this section is to prove the following:
\begin{proposition}
Let $d\in \{1,2,3\}$ and $\alpha,\beta,\gamma>0$. 
Assume that $(\omega,\mathbf{c})\in \R\times \R^d$ satisfies
$\omega >\frac{\sigma |\mathbf{c}|^2}{4}$.
If $(\varphi_1,\varphi_2,\varphi_3)\in \mathcal{H}^1$ be a weak solution to {\rm (\ref{ellip})}, 
then we have
\begin{equation}\label{weaksol_decay}
\sum_{j=1}^3\int_{\R^d}e^{p|x|}
\left(|\varphi_j(x)|^2+|\nabla \varphi_j (x)|^2\right)dx<\infty
\end{equation}
for
\begin{equation}\label{kappa_cond}
0<p<\sqrt{4\omega \sigma_0}\left(1-\sqrt{\frac{\sigma}{4\omega}}|c|\right),
\end{equation}
where
\[
\sigma_0:=\min\left\{\frac{2}{\alpha},\frac{1}{\beta},\frac{1}{\gamma}\right\}. 
\]
\end{proposition}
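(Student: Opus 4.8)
The plan is to run a weighted-energy estimate (a ground-state transform) on the whole system, using the coercivity of $L_{\omega,\mathbf{c}}$ from Proposition~\ref{L_bdd} to control the linear part and the decay of $\Phi$ at infinity to absorb the nonlinear terms. By Corollary~\ref{phi_regularity} and Remark~\ref{weak_classi_sol}, $\Phi$ is smooth, is a classical solution of (\ref{ellip}), and $\varphi_j,\nabla\varphi_j\to 0$ as $|x|\to\infty$; in particular $\Phi$ is bounded, and every integration by parts below is justified and free of boundary terms.

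Fix $p$ as in (\ref{kappa_cond}) and pick $p'$ with $p<p'<\sqrt{4\omega\sigma_0}\,(1-\sqrt{\sigma/(4\omega)}\,|c|)$. Writing $\langle x\rangle:=(1+|x|^2)^{1/2}$, set $\theta_n(x):=\tfrac{p'}{2}\,\langle x\rangle/(1+\langle x\rangle/n)$: each $\theta_n$ is smooth and bounded, $|\nabla\theta_n|\le p'/2$ uniformly in $n$, and $\theta_n\nearrow\tfrac{p'}{2}\langle x\rangle$. Then $v_{j,n}:=e^{\theta_n}\varphi_j\in\mathcal{H}^1$ is an admissible test function. Testing each equation of (\ref{ellip}) against $e^{2\theta_n}\overline{\varphi_j}$, summing over $j$ and taking real parts, the two integrations by parts realizing the transform produce the same quadratic form as $L_{\omega,\mathbf{c}}$ evaluated at $V_n:=(v_{1,n},v_{2,n},v_{3,n})$, except that the masses $2\omega,\omega,\omega$ are replaced by $2\omega-\alpha|\nabla\theta_n|^2,\ \omega-\beta|\nabla\theta_n|^2,\ \omega-\gamma|\nabla\theta_n|^2$: the cross terms $\int\nabla\theta_n\cdot\nabla|v_{j,n}|^2$ coming from the Laplacian cancel the $\Delta\theta_n$ contributions, and the first-order part reassembles (up to a sign, immaterial below) into the momentum terms $\mathbf{c}\cdot\mathbf{p}(v_{j,n})$.

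The heart of the matter is to show that this reduced-mass form is still coercive. Redoing the square-completion of Proposition~\ref{L_bdd} with the reduced masses and $|\nabla\theta_n|\le p'/2$, the per-component thresholds become $2\omega-\alpha(p')^2/4>|c|^2/(4\alpha)$, $\omega-\beta(p')^2/4>|c|^2/(4\beta)$ and $\omega-\gamma(p')^2/4>|c|^2/(4\gamma)$, that is
\[
(p')^2<\min\Bigl\{\tfrac{8\omega}{\alpha}-\tfrac{|c|^2}{\alpha^2},\ \tfrac{4\omega}{\beta}-\tfrac{|c|^2}{\beta^2},\ \tfrac{4\omega}{\gamma}-\tfrac{|c|^2}{\gamma^2}\Bigr\}.
\]
A short computation comparing the uniform rate $\sqrt{4\omega\sigma_0}(1-\sqrt{\sigma/(4\omega)}|c|)$ with these three bounds (using $(1-u)^2\le 1-u^2$ for $u\in[0,1)$, together with the standing assumption $\omega>\sigma|c|^2/4$) shows that the chosen $p'$ lies in this range, so the form is bounded below by $c_0\sum_j\|v_{j,n}\|_{\mathcal{H}^1}^2$ for some $c_0>0$ independent of $n$. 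This coercivity step is the main obstacle: one must track how the non-self-adjoint drift $i\mathbf{c}\cdot\nabla$ lowers the admissible rate by the factor $1-\sqrt{\sigma/(4\omega)}|c|$. The same reduction is seen conceptually from the gauge substitutions $\varphi_j=e^{i\mathbf{c}\cdot x/(2\alpha_j)}\chi_j$, which turn each operator self-adjoint with mass lowered to $2\omega-|c|^2/(4\alpha)$, etc.

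It remains to absorb the right-hand side $\mathrm{Re}\sum_j\int e^{2\theta_n}f_j\,\overline{\varphi_j}$, where $f_1=(\nabla\cdot\varphi_3)\varphi_2$, $f_3=\nabla(\varphi_1\cdot\overline{\varphi_2})$ and $f_2$ is analogous; in each $f_j$ two of the three factors carry no derivative. Distributing the weight as $e^{\theta_n}$ on the differentiated factor (using $e^{\theta_n}\nabla\cdot\varphi_3=\nabla\cdot v_{3,n}-\nabla\theta_n\cdot v_{3,n}$) and $e^{\theta_n}$ on the matching $\overline{\varphi_j}$, and splitting $\R^d=\{|x|\le R\}\cup\{|x|>R\}$, the region $\{|x|\le R\}$ contributes a finite constant $C_R$ independent of $n$ (the weight is bounded there and $\Phi\in\bigcap_m H^m$), while on $\{|x|>R\}$ the undifferentiated factors are at most $\|\Phi\|_{L^\infty(|x|>R)}=:\varepsilon_R$, so that part is $\le C\varepsilon_R\sum_j\|v_{j,n}\|_{\mathcal{H}^1}^2$. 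Choosing $R$ with $C\varepsilon_R\le c_0/2$ gives $\sum_j\|v_{j,n}\|_{\mathcal{H}^1}^2\le 4C_R/c_0$, uniformly in $n$. Since $|v_{j,n}|^2=e^{2\theta_n}|\varphi_j|^2$ and $|\nabla v_{j,n}|^2\ge\tfrac12 e^{2\theta_n}|\nabla\varphi_j|^2-\tfrac{(p')^2}{4}e^{2\theta_n}|\varphi_j|^2$, this bounds $\int e^{2\theta_n}(|\varphi_j|^2+|\nabla\varphi_j|^2)$ uniformly in $n$; letting $n\to\infty$, monotone convergence together with $e^{p'\langle x\rangle}\ge e^{p|x|}$ yields (\ref{weaksol_decay}).
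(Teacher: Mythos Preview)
Your argument is correct and follows the same Agmon-type weighted energy method as the paper: both introduce a regularized exponential weight with uniformly bounded logarithmic gradient, test the equations of (\ref{ellip}) against the weighted unknown, extract a coercive linear part, and absorb the cubic right-hand side by splitting $\R^d$ into a bounded ball (finite contribution independent of the regularization) and its complement (small by the decay of $\Phi$ from Remark~\ref{weak_classi_sol}). The packaging differs slightly: the paper keeps the weighted quantities $\|\theta_\epsilon^{1/2}\varphi_j\|_{L^2}$, $\|\theta_\epsilon^{1/2}\nabla\varphi_j\|_{L^2}$ and handles the two cross terms $I_{1,j}$, $I_{2,j}$ by Young's inequality with parameters $\delta_j,\eta_j$ chosen so that all six coefficients stay positive exactly under (\ref{kappa_cond}), whereas you perform the conjugation $v_{j,n}=e^{\theta_n}\varphi_j$ so that the linear part becomes the quadratic form of Proposition~\ref{L_bdd} with masses reduced by $\alpha|\nabla\theta_n|^2$, $\beta|\nabla\theta_n|^2$, $\gamma|\nabla\theta_n|^2$, and then reuse that proposition's square-completion. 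Your route is a bit more conceptual (and makes transparent why the decay rate degrades by the factor $1-\sqrt{\sigma/(4\omega)}\,|c|$ through the shifted masses), while the paper's explicit parameter choice $\delta_1=\sqrt{\alpha/(2\omega)}$, $\delta_2=\sqrt{\beta/\omega}$, $\delta_3=\sqrt{\gamma/\omega}$ makes the threshold (\ref{kappa_cond}) appear directly without your auxiliary comparison $(1-u)^2\le 1-u^2$. One small point: for $f_3=\nabla(\varphi_1\cdot\overline{\varphi_2})$ your ``weight on the differentiated factor'' recipe needs either an integration by parts onto $\varphi_3$ (as the paper does, producing $J_3=J_1+(\varphi_1\cdot\overline{\varphi_2},(\nabla\theta_\epsilon)\cdot\varphi_3)$) or the product-rule expansion; either fix is routine.
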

\begin{proof}
For $\epsilon >0$, we define $\theta_{\epsilon}:\R^d\rightarrow \R_{> 0}$ as
\[
\theta_{\epsilon}(x):=\exp \left(\frac{p\sqrt{1+|x|^2}}{1+\epsilon \sqrt{1+|x|^2}}\right). 
\]
We note that $\theta_{\epsilon}$ and $\nabla \theta_{\epsilon}$ are bounded on $\R^d$. 
Indeed, by a simple calculation, we can see that 
$0\le \theta_{\epsilon}(x)\le e^{\frac{p}{\epsilon}}$ and
\begin{equation}\label{theta_deriva}
    |\nabla \theta_{\epsilon}(x)|\le p\theta_{\epsilon}(x) 
\end{equation}
for any $x\in \R^d$. 
We define the real-valued inner product $(\cdot ,\cdot)$ on $(L^2(\R^d))^d$ as
$(f,g):={\rm Re}(f,g)_{L^2(\R^d)}$ for $f,g\in (L^2(\R^d))^d$. 
Because $(\varphi_1,\varphi_2,\varphi_3)$ be a weak solution to (\ref{ellip}), we have $\varphi_1$, $\varphi_2$, $\varphi_3\in \left(\bigcap_{m=1}^{\infty}H^m(\R^d)\right)^d$ and
\[
\begin{split}
0&=(-\alpha \Delta \varphi_1+2\omega \varphi_1+i(\mathbf{c}\cdot \nabla) \varphi_1
-(\nabla \cdot \varphi_3)\varphi_2,\theta_{\epsilon}\varphi_1),\\
0&=(-\beta \Delta \varphi_2+\omega \varphi_2+i(\mathbf{c}\cdot \nabla) \varphi_2
-(\nabla \cdot \overline{\varphi_3})\varphi_1,\theta_{\epsilon}\varphi_2),\\
0&=(-\gamma \Delta \varphi_3+\omega \varphi_3+i(\mathbf{c}\cdot \nabla) \varphi_3
+\nabla (\varphi_1\cdot \overline{\varphi_2}),\theta_{\epsilon}\varphi_3)
\end{split}
\]
by Corollary \ref{phi_regularity}. 
By using the integration by parts 
for the first terms of right hand side, 
these equations can be written as
\[
\begin{split}
0&=\alpha (\nabla \varphi_1,\theta_{\epsilon}\nabla \varphi_1)
+\alpha I_{1,1}
+2\omega (\varphi_1,\theta_{\epsilon}\varphi_1)
+I_{2,1}
-J_1,\\
0&=\beta (\nabla \varphi_2,\theta_{\epsilon}\nabla \varphi_2)
+\beta I_{1,2}
+\omega (\varphi_2,\theta_{\epsilon}\varphi_2)
+I_{2,2}
-J_2,\\
0&=\gamma (\nabla \varphi_3,\theta_{\epsilon}\nabla \varphi_3)
+\gamma I_{1,3}
+\omega (\varphi_3,\theta_{\epsilon}\varphi_3)
+I_{2,3}
-J_3, 
\end{split}
\]
where 
\[
I_{1,j}=
\sum_{k=1}^d(\nabla \varphi_{j}^{(k)}\cdot \nabla \theta_{\epsilon},\varphi_{j}^{(k)}),\ \ \ 
I_{2,j}=(i(\mathbf{c}\cdot \nabla)\varphi_j,\theta_{\epsilon}\varphi_j),\ \ 
(\varphi_j=(\varphi_j^{(1)},\cdots,\varphi_j^{(d)}))
\]
for $j=1,2,3$
and
\[
J_1:=((\nabla \cdot \varphi_3)\varphi_2,\theta_{\epsilon}\varphi_1),\ \ 
J_2:=((\nabla \cdot \overline{\varphi_3})\varphi_1,\theta_{\epsilon}\varphi_2),\ \ 
J_3:=-(\nabla (\varphi_1\cdot \overline{\varphi_2}),\theta_{\epsilon}\varphi_3). 
\]
Then, we obtain
\[
\begin{split}
J_1&=\alpha \|\theta_{\epsilon}^{\frac{1}{2}}|\nabla \varphi_1|\|_{L^2}^2
+2\omega \|\theta_{\epsilon}^{\frac{1}{2}}|\varphi_1|\|_{L^2}^2
+\alpha I_{1,1}+I_{2,1},\\
J_2&=\beta \|\theta_{\epsilon}^{\frac{1}{2}}|\nabla \varphi_2|\|_{L^2}^2
+\omega \|\theta_{\epsilon}^{\frac{1}{2}}|\varphi_2|\|_{L^2}^2
+\beta I_{1,2}+I_{2,2},\\
J_3&=\gamma \|\theta_{\epsilon}^{\frac{1}{2}}|\nabla \varphi_3|\|_{L^2}^2
+\omega \|\theta_{\epsilon}^{\frac{1}{2}}|\varphi_3|\|_{L^2}^2
+\gamma I_{1,3}+I_{2,3}. 
\end{split}
\]
By (\ref{theta_deriva}) and the Young inequality, we have
\[
|I_{1,j}|\le \sum_{k=1}^d\int_{\R^d}|\nabla \varphi_j^{(k)}(x)|p\theta_{\epsilon}(x)|\varphi_j^{(k)}(x)|dx
\le p\left(\frac{\delta_j}{2}\|\theta_{\epsilon}^{\frac{1}{2}}|\nabla \varphi_j|\|_{L^2}^2
+\frac{1}{2\delta_j}\|\theta_{\epsilon}^{\frac{1}{2}}|\varphi_j|\|_{L^2}^2\right)
\]
and
\[
|I_{2,j}|\le \int_{\R^d}|\mathbf{c}||\nabla \varphi_j(x)|\theta_{\epsilon}(x)|\varphi_j(x)|dx
\le |\mathbf{c}|\left(\frac{\eta_j}{2}\|\theta_{\epsilon}^{\frac{1}{2}}|\nabla \varphi_j|\|_{L^2}^2
+\frac{1}{2\eta_j}\|\theta_{\epsilon}^{\frac{1}{2}}|\varphi_j|\|_{L^2}^2\right), 
\]
where $\delta_j>0$, $\eta_j>0$ $(j=1,2,3)$ will be chosen later. 
Therefore, we get
\[
\begin{split}
    |J_1|&\ge \left(\alpha -\frac{\alpha p\delta_1}{2}-\frac{|\mathbf{c}|\eta_1}{2}\right)\|\theta_{\epsilon}^{\frac{1}{2}}|\nabla \varphi_1|\|_{L^2}^2
    +\left(2\omega -\frac{\alpha p}{2\delta_1}-\frac{|\mathbf{c}|}{2\eta_1}\right)\|\theta_{\epsilon}^{\frac{1}{2}}|\varphi_1|\|_{L^2}^2
    =:A_1\|\theta_{\epsilon}^{\frac{1}{2}}|\nabla \varphi_1|\|_{L^2}^2+B_1\|\theta_{\epsilon}^{\frac{1}{2}}|\varphi_1|\|_{L^2}^2,\\
    |J_2|&\ge \left(\beta -\frac{\beta p\delta_2}{2}-\frac{|\mathbf{c}|\eta_2}{2}\right)\|\theta_{\epsilon}^{\frac{1}{2}}|\nabla \varphi_2|\|_{L^2}^2
    +\left(\omega -\frac{\beta p}{2\delta_2}-\frac{|\mathbf{c}|}{2\eta_2}\right)\|\theta_{\epsilon}^{\frac{1}{2}}|\varphi_2|\|_{L^2}^2
    =:A_2\|\theta_{\epsilon}^{\frac{1}{2}}|\nabla \varphi_2|\|_{L^2}^2+B_2\|\theta_{\epsilon}^{\frac{1}{2}}|\varphi_2|\|_{L^2}^2,\\
    |J_3|&\ge \left(\gamma -\frac{\gamma p\delta_3}{2}-\frac{|\mathbf{c}|\eta_3}{2}\right)\|\theta_{\epsilon}^{\frac{1}{2}}|\nabla \varphi_2|\|_{L^2}^2
    +\left(\omega -\frac{\gamma p}{2\delta_3}-\frac{|\mathbf{c}|}{2\eta_3}\right)\|\theta_{\epsilon}^{\frac{1}{2}}|\varphi_2|\|_{L^2}^2
    =:A_3\|\theta_{\epsilon}^{\frac{1}{2}}|\nabla \varphi_3|\|_{L^2}^2+B_3\|\theta_{\epsilon}^{\frac{1}{2}}|\varphi_3|\|_{L^2}^2.
\end{split}
\]
We choose $\delta_1,\delta_2,\delta_3>0$ as
\[
\delta_1=\sqrt{\frac{\alpha}{2\omega}},\ \ \delta_2=\sqrt{\frac{\beta}{\omega}},\ \ \delta_3=\sqrt{\frac{\gamma}{\omega}}. 
\]
Then, we can see that 
\[
\frac{\alpha -\frac{\alpha p\delta_1}{2}}{\frac{|\mathbf{c}|}{2}}>\frac{\frac{|\mathbf{c}|}{2}}{2\omega -\frac{\alpha p}{2\delta_1}},\ \ 
\frac{\beta -\frac{\beta p\delta_2}{2}}{\frac{|\mathbf{c}|}{2}}>\frac{\frac{|\mathbf{c}|}{2}}{\omega -\frac{\beta p}{2\delta_2}},\ \ 
\frac{\gamma -\frac{\gamma p\delta_3}{2}}{\frac{|\mathbf{c}|}{2}}>\frac{\frac{|\mathbf{c}|}{2}}{\omega -\frac{\gamma p}{2\delta_3}}.
\]
by (\ref{kappa_cond}). 
Therefore, we can choose $\eta_1,\eta_2,\eta_3>0$ such that 
$A_1,A_2,A_3>0$ and $B_1,B_2,B_3>0$. 

On the other hand, 
by (\ref{phi_3_vanish_infi}), 
there exists $R>0$ such that for any $x\in \R^d$ with $|x|>R$,
\begin{equation}\label{nabla_M_est}
|\nabla \cdot \varphi_3(x)|\le \delta M
\end{equation}
holds, 
where $M:=\min_{1\le j\le 3}\{A_j,B_j\}>0$ 
and $\delta >0$ is small constant. 
For $j=1,2$, we divide the integral 
in the definition of $J_j$ as
\[
\begin{split}
|J_j|&\le \int_{\R^d}|\nabla \cdot \varphi_3|\theta_{\epsilon}|\varphi_1||\varphi_2|dx
=\int_{|x|\ge R}|\nabla \cdot \varphi_3|\theta_{\epsilon}|\varphi_1||\varphi_2|dx
+\int_{|x|<R}|\nabla \cdot \varphi_3|\theta_{\epsilon}|\varphi_1||\varphi_2|dx
=:J_{\infty}+J_{0}. 
\end{split}
\]
For the first term, by (\ref{nabla_M_est}) and 
the Young inequality, we obtain
\[
J_{\infty}
\le \delta M\int_{|x|\ge R}\theta_{\epsilon}|\varphi_1||\varphi_2|dx
\le\frac{\delta M}{2}\left(\|\theta_{\epsilon}^{\frac{1}{2}}|\varphi_1|\|_{L^2}^2
+\|\theta_{\epsilon}^{\frac{1}{2}}|\varphi_2|\|_{L^2}^2\right).
\] 
For the second term, by the H\"older inequality and 
the Sobolev inequality, there exists $C>0$ such that
\[
J_{0}\le 
\|\theta_{\epsilon}\|_{L^{\infty}(|x|<R)}
\|\nabla \cdot \varphi_3\|_{L^2}\|\varphi_1\|_{L^4}\|\varphi_2\|_{L^4}
\le C\theta_{\epsilon}(R)\prod_{j=1}^3\|\varphi_j\|_{H^1}.
\]
For $j=3$, we note that by the integration by parts,
\[
\begin{split}
J_3&=(\varphi_1\cdot \overline{\varphi_2},\nabla \cdot (\theta_{\epsilon}\varphi_3))
=
(\varphi_1\cdot \overline{\varphi_2},\theta_{\epsilon}\nabla \cdot \varphi_3)
+(\varphi_1\cdot \overline{\varphi_2},(\nabla \theta_{\epsilon})\cdot \varphi_3)\\
&=(\theta_{\epsilon}\varphi_1,(\nabla \cdot \varphi_3)\varphi_2)
+(\varphi_1\cdot \overline{\varphi_2},(\nabla \theta_{\epsilon})\cdot \varphi_3)
\\
&=J_1+(\varphi_1\cdot \overline{\varphi_2},(\nabla \theta_{\epsilon})\cdot \varphi_3).
\end{split}
\]
For the second term, we have
\[
\begin{split}
|(\varphi_1\cdot \overline{\varphi_2},(\nabla \theta_{\epsilon})\cdot \varphi_3)|
&\le p\int_{\R^d}\theta_{\epsilon}|\varphi_1||\varphi_2||\varphi_3|dx
=p\left(\int_{|x|\ge R}\theta_{\epsilon}|\varphi_1||\varphi_2||\varphi_3|dx
+\int_{|x|<R}\theta_{\epsilon}|\varphi_1||\varphi_2||\varphi_3|dx\right)\\
&\le \frac{p\delta M}{2}\left(\|\theta_{\epsilon}^{\frac{1}{2}}|\varphi_1|\|_{L^2}^2
+\|\theta_{\epsilon}^{\frac{1}{2}}|\varphi_2|\|_{L^2}^2\right)
+pC\theta_{\epsilon}(R)\prod_{j=1}^3\|\varphi_j\|_{H^1}
\end{split}
\]
by (\ref{theta_deriva}) and the same argument as for $j=1,2$ because $\varphi_3$ satisfies
\[
\lim_{|x|\rightarrow \infty}\varphi_3(x)=0. 
\]
As the results, we obtain
\[
\begin{split}
&M\sum_{j=1}^3\left(\|\theta_{\epsilon}^{\frac{1}{2}}|\nabla \varphi_j|\|_{L^2}^2+\|\theta_{\epsilon}^{\frac{1}{2}}|\varphi_j|\|_{L^2}^2\right)
\le |J_1|+|J_2|+|J_3|\\
&\le \frac{3+p}{2}\delta M
\left(\|\theta_{\epsilon}^{\frac{1}{2}}|\varphi_1|\|_{L^2}^2
+\|\theta_{\epsilon}^{\frac{1}{2}}|\varphi_2|\|_{L^2}^2\right)
+(3+p)C\theta_{\epsilon}(R)\prod_{j=1}^3\|\varphi_j\|_{H^1}. 
\end{split}
\]
By choosing $\delta =\frac{1}{3+p}$, we have
\[
\sum_{j=1}^3\left(\|\theta_{\epsilon}^{\frac{1}{2}}|\nabla \varphi_j|\|_{L^2}^2+\|\theta_{\epsilon}^{\frac{1}{2}}|\varphi_j|\|_{L^2}^2\right)
\le 2(3+p)C\theta_{\epsilon}(R)\prod_{j=1}^3\|\varphi_j\|_{H^1}.
\]
Let $\epsilon \rightarrow +0$ and by using the monotone convergence theorem, 
the desired estimate (\ref{weaksol_decay}) is obtained. 
\end{proof}
\subsection*{Acknowedgements}
{\rm
\ \ 
The authors express deep gratitude to Professor Norihisa Ikoma for fruiteful comments. The first author is supported by Grant-in-Aid for Young Scientists Research (No.21K13825). The second author is supported by JST CREST Grant Number JPMJCR1913, Japan and Young Scientists Research (No.19K14581), Japan Society for the Promotion of Science.
}

%%%%%%%%%%%%%%                   %%%%%%%%%%%%%%
%%%%%%%%%%%                         %%%%%%%%%%%
%%%%%%%%           References          %%%%%%%%
%%%%%%%%%%%                         %%%%%%%%%%%
%%%%%%%%%%%%%%                   %%%%%%%%%%%%%%
%A B C D E F G H I J K L M N O P Q R S T U V W X Y Z

%\bibliographystyle{jplain}
%\bibliography{reference}

\begin{thebibliography}{30}

\bibitem{A18}
A. H. Ardila, 
    {\it Orbital stability of standing waves for a system of nonlinear Schr\"odinger equa-
tions with three wave interaction}, Nonlinear Anal., {\bf 167} (2018), 1--20. 

\bibitem{BP22}
H. Bahouri and G. Perelman, 
    {\it Global well-posedness for the derivative nonlinear Schr\"odinger equation}, Invent. Math., {\bf 229} (2022), 639--688. 
    
\bibitem{BL83}
    H. Br\'ezis and E. H. Lieb, 
    {\it A relation between pointwise convergence of functions and convergence of functionals}, 
    Proc. Amer. Math. Soc. {\bf 88} (1983), 486--490.

\bibitem{CC04}
    M. Colin and T. Colin,
    {\it On a quasilinear Zakharov system describing laser-plasma interactions}, Diff. Int. Equas., {\bf 17} (2004), 297--330.

\bibitem{CCO06}
    M. Colin, T. Colin and M. Ohta
    {\it Stability of solitary waves for a system of nonlinear Schr\"odinger equations with three wave interaction}, Ann. Inst. H. Poincar\'e C Anal. Non Lin\'eaire., {\bf 26} (2009), 2211--2226.
    
\bibitem{CO06}
    M. Colin and M. Ohta,
    {\it Stability of solitary waves for derivative nonlinear Schr\"dinger equation}, Ann. Inst. H. Poincar\'e Anal. Non Lin\'eaire., {\bf 23} (2006), 753--764.

\bibitem{GW95}
    B. Guo and Y. Wu, 
    {\it Orbital stability of solitary waves for the nonlinear derivative Schr\"odinger equation}, J. Diff. Equas., {\bf 123} (1995), 35--55.

\bibitem{FHI17}
N. Fukaya, M. Hayashi and T. Inui, {\it A sufficient condition for global existence of solutions to a
generalized derivative nonlinear Schr\"odinger equation}, Anal. PDE {\bf 10} (2017), 1149--1167. 

\bibitem{FHI23}
N. Fukaya, M. Hayashi and T. Inui, {\it Traveling waves for a nonlinear Schr\"odinger system with quadratic interaction}, to appear in Mathematische Annalen. 

\bibitem{Hayashi22}
M. Hayashi, {\it Stability of algebraic solitons for nonlinear Schr\"odinger equations of derivative type: variational approach}, Ann. Henri Poinca\'e {\bf 23} (2022), 4249--4277. 

\bibitem{Hirayama14}
H. Hirayama, {\it Well-posedness and scattering for a system of quadratic derivative nonlinear Schr\"odinger equations with low regularity initial data}, Commun. Pure Appl. Anal., {\bf 13} (2014), no. 4, 1563--1591.

\bibitem{HK19}
H. Hirayama and S. Kinoshita, {\it Sharp bilinear estimates and its application to a system of quadratic derivative nonlinear Schr\"odinger equations}, Nonlinear Anal., {\bf 178} (2019), 205--226.

\bibitem{HHO20}
H. Hirayama, S. Kinoshita and M. Okamoto, {\it Well-posedness for a system of quadratic derivative nonlinear Schr\"odinger equations with radial initial data}, Ann. Henri Poincar\'e, {\bf 21} (2020), no. 8, 2611--2636.

\bibitem{HKO21}
H. Hirayama, S. Kinoshita and M. Okamoto, {\it Well-posedness for a system of quadratic derivative nonlinear Schr\"odinger equations in almost critical spaces}, J. Math. Anal. Appl., {\bf 499} (2021), no. 2, Paper No. 125028, 29 pp.

\bibitem{HKO22}
H. Hirayama, S. Kinoshita and M. Okamoto, {\it  remark on the well-posedness for a system of quadratic derivative nonlinear Schr\"odinger equations}, Comm. Pure. Appl. Anal., {\bf 21} (2022), no. 10, 3309--3334.

\bibitem{KW18}
S. Kwon and Y. Wu, {\it Orbital stability of solitary waves for derivative nonlinear Schr\"odinger equation}, J. Anal. Math., {\bf 135} (2018), no. 2, 473--486.

\bibitem{KO22}
K. Kurata and Y. Osada, 
    {\it Variational problems associated with a system of nonlinear Schr\"odinger equations with three wave interaction}, 
    Discrete Contin. Dyn. Syst. Ser. B, {\bf 27} (2022), 4239--4251.
    
\bibitem{Lieb83}
E. H. Lieb, 
    {\it On the lowest eigenvalue of the Laplacian for the intersection of two domains}, 
    Invent. Math. {\bf 74} (1983), 441--448.

\bibitem{WG12}
    Y. Z. Wang, W. Ge,
    {\it Well-posedness of initial value problem for fourth order nonlinear Schr\"odinger Equation}, Pure and Applied Mathematics Quarterly, {\bf 8} (2012), 1047--1073.

\bibitem{W15}
    Y. Wu,
    {\it Global well-posedness on the derivative nonlinear Schr\"odinger equation}, Anal. PDE, {\bf 8} (2015), 1101--1112.

\bibitem{YC06}
    J. Yang, X-J. Chen,
    {\it Linearization operators of solutions in the derivative nonlinear Schr\"odinger hierarchy}, Trends in Soliton Research, (2006), 15--27.

\end{thebibliography}
%\nocite{*}
%\end{document}

\end{document}